\colorlet{darkblue}{blue!50!black}
\newcommand{\p}{\partial}
\newcommand{\e}{\varepsilon}
\newcommand{\R}{{\mathbb R}}
\newcommand{\IP}{{\mathbb P}}
\newcommand{\E}{{\mathbb E}}
\newcommand{\BBB}{\boldsymbol{\mathit B}}
\newcommand{\bE}{\mathbf E}
\newcommand{\bG}{\mathbf G}
\newcommand{\bP}{\mathbf P}
\newcommand{\PPP}{\boldsymbol{\mathit P}}
\newcommand{\SSS}{\boldsymbol{\mathit S}}
\newcommand{\boR}{\boldsymbol{\mathit R}}
\newcommand{\XXX}{\boldsymbol{\mathit X}}
\newcommand{\GGamma}{\boldsymbol\Gamma}
\newcommand{\SE}{\mathscr E}
\newcommand{\mmu}{{\boldsymbol\mu}}
\newcommand{\ttau}{{\boldsymbol\tau}}
\newcommand{\oomega}{{\boldsymbol\omega}}
\newcommand{\zzeta}{{\boldsymbol\zeta}}
\newcommand{\OOmega}{{\boldsymbol\Omega}}
\newcommand{\aA}{{\cal A}}
\newcommand{\BB}{{\cal B}}
\newcommand{\CC}{{\mathscr C}}
\newcommand{\DD}{{\cal D}}
\newcommand{\EE}{{\cal E}}
\newcommand{\FF}{{\cal F}}
\newcommand{\KK}{{\cal K}}
\newcommand{\LL}{{\cal L}}
\newcommand{\PP}{{\cal P}}
\newcommand{\RR}{{\cal R}}
\newcommand{\VV}{{\cal V}}
\newcommand{\XX}{{\cal X}}
\newcommand{\YY}{{\cal Y}}
\newcommand{\ZZ}{{\cal Z}}
\newcommand{\UU}{{\mathcal U}}
\newcommand{\RRR}{\boldsymbol\RR}
\newcommand{\FFF}{\boldsymbol\FF}
\newcommand{\DDD}{\boldsymbol D}
\newcommand{\dd}{{\textup d}}
\newcommand{\PPPP}{{\mathfrak P}}
\newcommand{\BBBB}{{\mathfrak B}}
\newcommand{\MMMM}{{\mathfrak M}}
\newcommand{\nnn}{{\boldsymbol{\mathit n}}}
\newcommand{\uuu}{{\boldsymbol{\mathit u}}}
\newcommand{\vvv}{{\boldsymbol{\mathit v}}}
\newcommand{\supp}{\mathop{\rm supp}\nolimits}
\newcommand{\diver}{\mathop{\rm div}\nolimits}
\newcommand{\esssup}{\mathop{\rm ess\ sup}}
\theoremstyle{plain}
\newtheorem*{mt}{Main Theorem}
\newtheorem{theorem}{Theorem}[section]
\newtheorem{lemma}[theorem]{Lemma}
\newtheorem{proposition}[theorem]{Proposition}
\newtheorem{corollary}[theorem]{Corollary}
\theoremstyle{definition}
\theoremstyle{remark}
\newtheorem{remark}[theorem]{Remark}
\newtheorem*{example*}{Example}
\numberwithin{equation}{section}
\begin{document}
\author{Armen Shirikyan} 
\title{Controllability implies mixing~II.\\
Convergence in the dual-Lipschitz metric}
\date{\small D\'epartement de Math\'ematiques, Universit\'e de Cergy--Pontoise, 
CNRS UMR8088\\
2 avenue Adolphe Chauvin, 95302 Cergy--Pontoise Cedex, France\\ 
E-mail: \href{mailto:Armen.Shirikyan@u-cergy.fr}{Armen.Shirikyan@u-cergy.fr}\\[4pt]
Department of Mathematics and Statistics, McGill University\\ 805 Sherbrooke Street West, Montreal, QC, H3A 2K6, Canada\\[4pt]
Centre de Recherches Math\'ematiques, CNRS UMI3457, Universit\'e de Montr\'eal\\ 
Montr\'eal,  QC, H3C 3J7, Canada}
\maketitle

\begin{abstract}
This paper continues our study of the interconnection between controllability and mixing properties of random dynamical systems. We begin with an abstract result showing that the approximate controllability to a point and a local stabilisation property imply the uniqueness of a stationary measure and exponential mixing in the dual-Lipschitz metric. This result is then applied to the 2D Navier--Stokes system driven by a random force acting through the boundary. A by-product of our analysis is the local exponential stabilisation of the boundary-driven Navier--Stokes system by a regular boundary control.

\medskip
\noindent
{\bf AMS subject classifications:} 35R60, 60H15, 93B05, 93C20

\smallskip
\noindent
{\bf Keywords:} controllability, exponential mixing, Navier--Stokes system, boundary noise
\end{abstract}
\tableofcontents
\setcounter{section}{-1}

\section{Introduction}
\label{s0}
In the first part of this project~\cite{shirikyan-rms2017}, we studied a class of ordinary differential equations driven by vector fields with random amplitudes and proved that good knowledge of controllability properties ensures the uniqueness of a stationary distribution and exponential convergence to it in the total variation metric. A key property used in that work was the {\it solid controllability\/} from a point, which means, roughly speaking, that we have exact controllability from that point to a ball, and it is stable under small perturbations. In the case of partial differential equations, this property is rarely satisfied, and the aim of this paper is to replace it by a weaker condition of {\it local stabilisation\/} and to prove that it is still sufficient for the uniqueness of a stationary measure, whereas the convergence to it holds in the dual-Lipschitz  metric, which metrises the weak topology.

To be precise, we confine ourselves in the introduction to the main result of the paper on mixing for the 2D Navier--Stokes system driven by a boundary noise. Thus, we consider the problem
\begin{align} 
\p_tu+\langle u,\nabla\rangle u-\nu\Delta u+\nabla p&=0, \quad \diver u=0,
\quad x\in D,\label{0.4}\\
u\bigr|_{\p D}&=\eta, \label{0.5}
\end{align}
where $D\subset\R^2$ is a bounded domain with smooth boundary, $u=(u_1,u_2)$ and~$p$ are unknown velocity field and pressure, $\nu>0$ is the kinematic viscosity, and $\eta=\eta(t,x)$ is a random force that acts through the boundary and has a piecewise independent structure. Namely, we assume that
\begin{equation} \label{0.6}
\eta(t,x)=\sum_{k=1}^\infty I_{[k-1,k)}(t)\eta_k(t-k+1,x),
\end{equation}
where $I_{[k-1,k)}$ is the indicator function of the interval~$[k-1,k)$ and~$\{\eta_k\}$ is a sequence of i.i.d.\ random variables in the space~$L^2([0,1]\times\p D,\R^2)$ that possess some additional properties ensuring the well-posedness of problem~\eqref{0.4}, \eqref{0.5}. We are interested in the problem of mixing for the corresponding random flow. 

Let us formulate our main result informally, omitting some unessential technical details. We set $J=[0,1]$ and $\Sigma=J\times\p D$, and define~$E$ as the space of restrictions to~$\Sigma$ of the time-dependent divergence-free vector fields $u=(u_1,u_2)$ on $J\times D$ that satisfy the inclusions 
\begin{equation} \label{0.11}
u\in L^2(J,H^3), \quad \p_tu\in L^2(J,H^1),
\end{equation}
where $H^k$ stands for the Sobolev space of order~$k\ge0$ on the domain~$D$. An exact description of~$E$ can be found in the paper~\cite{FGH-2002} (see also Section~\ref{s4.4}), where it is shown, in particular, that~$E$ has the structure of a separable Hilbert space. We assume that the  random variables~$\eta_k$ belong to~$E$ almost surely and their law~$\ell$ satisfies the following hypothesis:

\smallskip
{\bf(H)}
{\sl The measure $\ell$ has a compact support in~$E$ and is decomposable in the following sense: there is an orthonormal basis~$\{e_j\}$ in~$E$  such that~$\ell$ can be represented as the tensor product of its  projections~$\ell_j$ to the one-dimensional subspaces spanned by~$e_j$. Moreover, $\ell_j$ has a $C^1$-smooth density with respect to the Lebesgue measure for any~$j\ge1$, and its support contains the origin}. 

\smallskip
For a random variable~$\xi$, we denote by~$\DD(\xi)$ its law, and we write $C(J,L^2)$ for the space of continuous functions on~$J$ with range in the space of square integrable vector fields on~$D$.  The following theorem is a simplified version of the main result of this paper (see Section~\ref{s3.1} for an exact and stronger statement). 

\begin{mt}
Under the above hypotheses, there is a probability measure~$\mmu$ on the space~$C(J,L^2)$ such that any solution~$u(t,x)$ of~\eqref{0.4}--\eqref{0.6} issued from a deterministic initial condition satisfies the inequality
\begin{equation} \label{0.7}
\bigl\|\DD(\uuu_k)-\mmu\bigr\|_L^*
\le C\bigl(\|u(0)\|_{L^2}\bigr)e^{-\gamma k}, \quad k\ge1,
\end{equation}
where $\gamma$ is a positive number not depending on~$u_0$, $\uuu_k$ stands for the restriction of the function~$u(t+k-1,x)$ to the cylinder~$[0,1]\times D$, and $\|\cdot\|_L^*$ denotes the dual-Lipschitz metric over the space $C(J,L^2)$. 
\end{mt}

Let us mention that the problem of mixing for randomly forced PDEs attracted a lot of attention in the last two decades, and the case in which all the determining modes are perturbed is rather well understood. We refer the reader to~\cite{FM-1995,KS-cmp2000,EMS-2001,BKL-2002} for the first achievements and to the book~\cite{KS-book} and the review papers~\cite{bricmont-2002,flandoli-2008,debussche-2013} for a detailed account of the results obtained so far in that situation. On the other hand, there are only a few works dealing with the case when the random noise does not act directly on the determining modes. Namely, Hairer and Mattingly~\cite{HM-2006,HM-2011} studied the 2D Navier--Stokes equations on the torus and the sphere and established the property of exponential mixing, provided that the random perturbation is white in time and contains the first few Fourier modes.  F\"oldes, Glatt-Holtz, Richards, and Thomann~\cite{FGRT-2015} proved a similar result for the Boussinesq system, assuming that a highly degenerate random forcing acts only on the equation for the temperature. In~\cite{shirikyan-asens2015},  the property of exponential mixing was stablished for the 2D Navier--Stokes system perturbed by a space-time localised smooth stochastic forcing. Finally, the recent paper~\cite{KNS-2018} proves a similar result in the situation when random forces are localised in the Fourier space and coloured in time. We also mention the papers~\cite{sinai-1991,EKMS-2000,bakhtin-2007,DV-2015,boritchev-2016,GS-2017,shirikyan-cup2017} devoted to the viscous and inviscid Burgers equation and some scalar conservation laws, whose flow possesses a strong stability property. To the best of my knowledge, the problem of mixing for the Navier--Stokes system with a random perturbation acting through the boundary was not studied in earlier works. 

\smallskip
In conclusion, let us mention that this paper is a part of the programme whose goal is to develop methods for applying the results and tools of the control theory in the study of mixing properties of flows generated by randomly forced evolution equations. It complements the earlier results established in~\cite{AKSS-aihp2007,shirikyan-asens2015,shirikyan-rms2017,KNS-2018} and develops a general framework for dealing with random perturbations acting through the boundary of the domain. 

\smallskip
The paper is organised as follows. In Section~\ref{s1}, we study an abstract discrete-time Markov process in a compact metric space and prove a result on uniqueness of a stationary distribution and its exponential stability. Section~\ref{s3} is devoted to discussing the initial-boundary value problem for the Navier--Stokes system and proving some properties of the resolving operator. The main result of the paper on mixing for the 2D Navier--Stokes system perturbed by a random boundary force is presented in Section~\ref{s5}. The appendix gathers a few auxiliary results used in the main text.

\medskip
{\bf Acknowledgement.}
The author is grateful to A.~Miranville and J.-P.~Puel for useful remarks on boundary-driven Navier--Stokes equations and observability inequalities. This research was carried out within the MME-DII Center of Excellence (ANR-11-LABX-0023-01) and supported by  {\it Agence Nationale de la Recherche\/} through the grant NONSTOPS (ANR-17-CE40-0006-02),  {\it Initiative d'excellence Paris-Seine\/}, and the CNRS PICS {\it Fluctuation theorems in stochastic systems\/}.

\subsubsection*{Notation}
Let~$(X,d)$ be a Polish space, let~$E$ be a separable Banach space, let $J\subset\R$ be a closed interval, and let~$D$ be a bounded domain or a surface in a Euclidean space. In addition to the conventions of~\cite{shirikyan-rms2017}, we use the following notation.

\smallskip
\noindent
$L_b(X)$ is the space of bounded continuous functions  $f:X\to\R$ such that
$$
\|f\|_L:=\|f\|_\infty+\sup_{0<d(u,v)\le 1}\frac{|f(u)-f(v)|}{d(u,v)}<\infty,
$$
where $\|\cdot\|_\infty$ is the usual supremum norm. 

\smallskip
\noindent
$\PP(X)$ stands for the set of probability measures with the {\it dual-Lipschitz metric\/}
$$
\|\mu_1-\mu_2\|_L^*=\sup_{\|f\|_L\le1}|(f,\mu_1)-(f,\mu_2)|,
$$
where the supremum is taken over all function $f\in L_b(X)$ with norm $\le1$. 

\smallskip
\noindent
$L^p(J,E)$ is the space of Borel-measurable functions $f:J\to E$ such that 
$$
\|f\|_{L^p(J,E)}=\biggl(\int_J\|f(t)\|_E^p\,\dd t\biggr)^{1/p}<\infty.
$$
In the case $p=\infty$, this norm is replaced by $\|f\|_{L^\infty(J,E)}=\esssup_{t\in J}\|f(t)\|_E$. If $J\subset\R$ is unbounded, then we write $L_{\mathrm{loc}}^p(J,E)$ for the space of functions $f:J\to E$ whose restriction to any bounded interval $I\subset J$ belongs to $L^p(I,E)$. 

\smallskip
\noindent
$\LL(E,F)$ is the space of continuous linear operators from~$E$ to another Banach space~$F$. This space is endowed with the usual operator norm. 

\smallskip
\noindent
$W^{s,q}(D)$ is the standard Sobolev space of measurable functions $f:D\to\R$ such that $\p^\alpha f\in L^q(D)$ for any multi-index $\alpha$ with $|\alpha|\le s$. In the case $q=2$, we shall write~$H^s(D)$. The norms in these spaces are denoted by $\|\cdot\|_{W^{s,q}}$ and~$\|\cdot\|_s$, respectively. We write $W^{s,q}(D,\R^2)$ and $H^s(D,\R^2)$ for the corresponding spaces of vector functions. 

\smallskip
\noindent
$H_0^s=H_0^s(D)$ is the subspace in~$H^s(D)$ consisting of the functions vanishing on~$\p D$. The corresponding space of vector functions $H_0^s(D,\R^2)$ is defined in a similar way. A function $f\in H_0^1(D)$ extended by zero to a larger domain $D'\supset D$ belongs to $H_0^1(D')$; we tacitly assume that any function in~$H_0^1(D)$ is extended by zero outside~$D$. 

\smallskip
\noindent
$C_i$ and $C_i(\dots)$ denote positive numbers, which may depend on the quantities mentioned in the brackets. 

\section{Mixing in the dual-Lipschitz metric}
\label{s1}
\subsection{Description of the model}
\label{s1.1}
Let us consider the following random dynamical system in a compact metric space~$(X,d)$:
\begin{equation} \label{1.1}
u_k=S(u_{k-1},\eta_k), \quad k\ge1.
\end{equation}
Here $\{\eta_k\}$ is a sequence of i.i.d.\ random variables in a separable Banach space~$E$ and $S:X\times E\to X$ is a continuous mapping. Equation~\eqref{1.1} is supplemented with the initial condition
\begin{equation} \label{1.2}
u_0=u,
\end{equation}
where~$u$ is an $X$-valued random variable independent of~$\{\eta_k\}$. We denote by~$(u_k,\IP_u)$ the discrete-time Markov process associated with~\eqref{1.1} and by $P_k(u,\Gamma)$ its transition function. The Markov operators corresponding to~$P_k(u,\Gamma)$ are denoted by
$$
\PPPP_k:C(X)\to C(X), \quad \PPPP_k^*:\PP(X)\to\PP(X), \quad k\ge0.
$$

Due to the compactness of~$X$, the Markov process~$(u_k,\IP_u)$ has at least one stationary distribution~$\mu$, that is, a probability measure satisfying the equation $\PPPP_1^*\mu=\mu$. In this section, we investigate the question of uniqueness of stationary distribution and its exponential stability in the dual-Lipschitz metric. To this end, we introduce some controllability properties for~\eqref{1.1}. 

\medskip
\underline{\it Approximate controllability to a given point}.
Let  $\hat u\in X$ be a point and let~$\KK\subset E$ be a compact subset. System~\eqref{1.1} is said to be {\it globally approximately controllable\footnote{Note that this concept of approximate controllability is slightly stronger than the one used in~\cite{shirikyan-rms2017}.} to~$\hat u$ by a $\KK$-valued control\/} if for any $\e>0$ there exists $m\ge1$ such that, given any initial point $u\in X$, we can find $\zeta_1^u,\dots,\zeta_m^u\in\KK$ for which 
\begin{equation} \label{1.3}
d\bigl(S_m(u;\zeta_1^u,\dots,\zeta_m^u),\hat u\bigr)\le\e,
\end{equation}
where $S_k(u;\eta_1,\dots,\eta_k)$ denotes the trajectory of~\eqref{1.1}, \eqref{1.2}. 

In~\cite{shirikyan-rms2017}, we imposed the condition of solid controllability, which implies, in particular, the exact controllability to a ball. Here we replace it by a property of local stabilisation.

\medskip
\underline{\it Local stabilisability}.
Let us set $\DDD_\delta=\{(u,u')\in X\times X:d(u,u')\le\delta\}$. We say that~\eqref{1.1} is {\it locally stabilisable\/} if for any $R>0$ and any compact set $\KK\subset E$ there is a finite-dimensional subspace $\EE\subset E$, positive numbers~$C$, $\delta$, $\alpha\le1$, and~$q<1$, and a continuous mapping 
$$
\varPhi:\DDD_\delta\times B_E(R)\to \EE, \quad (u,u',\eta)\mapsto \eta', 
$$
which is continuously differentiable in~$\eta$ and satisfies the following inequalities for any $(u,u')\in \DDD_\delta$:
\begin{align}
\sup_{\eta\in B_E(R)}
\bigl(\|\varPhi(u,u',\eta)\|_E+\|D_\eta\varPhi(u,u',\eta)\|_{\LL(E)}\bigr)
&\le C\,d(u,u')^\alpha,\label{1.5}\\
\sup_{\eta\in \KK}d\bigl(S(u,\eta),S(u',\eta+\varPhi(u,u',\eta))\bigr)
&\le q\,d(u,u').\label{1.6}
\end{align}

Finally, concerning the random variable~$\eta_k$, we shall assume that their law~$\ell$ has a compact support $\KK\subset E$ and is {\it decomposable\/} in the following sense. There are two sequences of closed subspaces~$\{F_n\}$ and~$\{G_n\}$ in~$E$ possessing the two properties below:
\begin{itemize}
\item[\bf(a)]
$\dim F_n<\infty$ and $F_n\subset F_{n+1}$ for any $n\ge1$, and the vector space $\cup_nF_n$ is dense in~$E$.
\item[\bf(b)]
$E$ is the direct sum of~$F_n$ and~$G_n$, the norms of the corresponding projections~${\mathsf P}_n$ and~${\mathsf Q}_n$ are bounded uniformly in~$n\ge 1$, and the measure~$\ell$ can be written as the product of its projections ${\mathsf P}_{n*}\ell$ and~${\mathsf Q}_{n*}\ell$ for any $n\ge1$. 
\end{itemize}

\subsection{Uniqueness and exponential mixing}
\label{s1.3}
From now on, we assume that the phase space~$X$ is a compact subset of a Banach space~$H$, endowed with a norm~$\|\cdot\|$. We shall say that a stationary measure $\mu\in\PP(X)$ for $(u_k,\IP_u)$ is {\it exponentially mixing\/} (in the dual-Lipschitz metric) if there are positive numbers~$\gamma$ and~$C$ such that 
\begin{equation} \label{4.12}
\|\PPPP_k^*\lambda-\mu\|_L^*\le Ce^{-\gamma k}
\quad\mbox{for $k\ge0$, $\lambda\in\PP(X)$}. 
\end{equation}
The following result provides an analogue of Theorem~1.1 in~\cite{shirikyan-rms2017} for the case when the property of solid controllability is replaced by local stabilisability.

\begin{theorem} \label{t1.2}
Suppose that $S:H\times E\to H$ is a $C^1$-smooth mapping such that $S(X\times \KK)\subset X$, and system~\eqref{1.1} with phase space~$X$ is locally stabilisable and globally approximately controllable to some point $\hat u\in X$ with a $\KK$-valued control. Let us assume, in addition, that the law~$\ell$ of~$\eta_k$  is decomposable, and the measures~${\mathsf P}_{n*}(\ell)$ possess $C^1$-smooth densities~$\rho_n$ with respect to the Lebesgue measure on~$F_n$. Then~\eqref{1.1} has a unique stationary measure~$\mu\in\PP(X)$, which is exponentially mixing.
\end{theorem}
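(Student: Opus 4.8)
The plan is to prove Theorem~\ref{t1.2} by a coupling argument, constructing a coupling of two trajectories that contracts with positive probability at each step, combined with the approximate controllability to drive any pair of initial points into the small region $\DDD_\delta$ where the stabilising control~$\varPhi$ is effective. The overall structure follows the standard Kuksin--Shirikyan scheme (see~\cite{KS-book,shirikyan-rms2017}), but the key novelty is that, since the stabilisation is only approximate in a finite-dimensional sense, the contraction will be in the dual-Lipschitz metric rather than in total variation, and one must carefully track the finite-dimensional projections.

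First I would reduce to a one-step recurrence estimate of the form
\begin{equation} \label{plan-rec}
\|\PPPP_k^*\lambda_1-\PPPP_k^*\lambda_2\|_L^*\le C\bigl(q_1^k+\e_k\bigr),
\end{equation}
where~$q_1<1$ and~$\e_k\to0$, which immediately yields~\eqref{4.12} once we know a stationary measure exists (compactness of~$X$). To obtain~\eqref{plan-rec}, I would fix $R>0$ large enough that $\KK\subset B_E(R)$ and a compact $\KK$, and let $\EE\subset E$, $C,\delta,\alpha,q$ be as in the local stabilisability hypothesis. Choose $n$ so large that $\EE\subset F_n$; by decomposability, $\ell=\mathsf P_{n*}\ell\otimes\mathsf Q_{n*}\ell$, and $\mathsf P_{n*}\ell$ has the $C^1$ density~$\rho_n$. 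The crucial analytic ingredient is that a finite-dimensional shift by~$\varPhi(u,u',\eta)$ — which is small of order $d(u,u')^\alpha$ by~\eqref{1.5} — distorts the law $\mathsf P_{n*}\ell$ by an amount controllable in total variation via the $C^1$-smoothness of~$\rho_n$ and the change-of-variables formula; here the uniform bound on $\|D_\eta\varPhi\|$ in~\eqref{1.5} guarantees the map $\eta\mapsto\eta+\varPhi(u,u',\eta)$ is a $C^1$-diffeomorphism onto its image with Jacobian close to~$1$. This lets me build, for $(u,u')\in\DDD_\delta$, a coupling $(\xi,\xi')$ of $\DD(\eta_1)$ with itself such that $\xi'=\xi+\varPhi(u,u',\xi)$ with probability $\ge 1-C_1 d(u,u')^\alpha$, and on that event~\eqref{1.6} gives $\|S(u,\xi)-S(u',\xi')\|\le q\,d(u,u')$.

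Next I would iterate this coupling on $\DDD_\delta$: starting from a pair in $\DDD_\delta$, as long as successive contractions keep occurring the distance decays geometrically like~$q^k$, and the probability that some decoupling happens before step~$k$ is bounded by $\sum_j C_1 q^{j\alpha}\delta^\alpha$, which can be made $<1/2$ by shrinking~$\delta$; on the decoupling event I restart, using that the two trajectories are still in~$X$ (compact) and the approximate controllability to return near~$\hat u$ — hence into $\DDD_\delta$ after finitely many deterministic-looking steps of positive probability, using that the law of~$\eta_k$ charges neighbourhoods of the required controls~$\zeta_i$. (This positivity uses that $\supp\ell=\KK$ contains the relevant controls; the approximate controllability is stated with $\KK$-valued controls precisely so this is automatic.) A renewal/stopping-time argument then converts these two mechanisms — geometric contraction on $\DDD_\delta$ and exponentially fast, positive-probability return to $\DDD_\delta$ — into~\eqref{plan-rec}. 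Uniqueness of~$\mu$ follows by applying the estimate to $\lambda_1=\mu$, $\lambda_2=\mu'$ two stationary measures.

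The main obstacle I anticipate is the bookkeeping in the coupling construction on $\DDD_\delta$: because the shift $\varPhi$ depends on the current pair $(u_k,u_k')$, the coupled process is not a simple Markov chain with a fixed kernel, and one must set up an auxiliary extended Markov chain on $X\times X$ (or on $\DDD_\delta$ together with a "coupled" flag) and verify that the decoupling probabilities, which depend on the random distances $d(u_k,u_k')$, genuinely sum to something~$<1$. A secondary technical point is justifying the total-variation bound $\|\mathsf P_{n*}\ell - (\text{shifted } \mathsf P_{n*}\ell)\|_{\mathrm{var}}\le C\|\varPhi\|$ uniformly over the parameters $(u,u',\cdot)$ ranging over a compact set; this requires the $C^1$ density~$\rho_n$ to be handled with a little care near the boundary of its support, but the continuity of $\varPhi$ on the compact set $\DDD_\delta\times B_E(R)$ makes the bounds uniform. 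Once these are in place, assembling~\eqref{4.12} is routine.
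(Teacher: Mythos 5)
Your proposal follows essentially the same route as the paper: a coupling that, on $\DDD_\delta$, realises the shifted noise $\xi'=\xi+\varPhi(u,u',\xi)$ with probability $1-C\,d(u,u')^\alpha$ via the total-variation bound for the finite-dimensional shift against the $C^1$ density (the paper's Propositions~\ref{p4.4} and~\ref{p4.5} and Lemma~\ref{l4.6}), independent noises off $\DDD_\delta$ plus approximate controllability for recurrence, and a renewal/stopping-time argument (formalised in the paper as the recurrence and squeezing hypotheses of Theorem~\ref{t4.2}). The obstacles you flag — the conditional-independence bookkeeping for the simultaneous return of both components to a neighbourhood of $\hat u$, and the uniformity of the total-variation estimate — are exactly the points the paper addresses with Lemmas~\ref{l1.3} and~\ref{l1.4}.
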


\begin{proof}
We first outline the scheme\footnote{The key coupling construction of this proof goes back to~\cite{KS-cmp2001} (see Section~3).} of the proof, which is based on an application of Theorem~\ref{t4.2}. To this end, we shall construct an extension $(\uuu_k,\bP_\uuu)$ for the Markov process~$(u_k,\IP_u)$ associated with~\eqref{1.1} such that the squeezing and recurrence properties hold. 

Let us write~$\XXX=X\times X$ and, given a number $\delta>0$, denote
$$
\BBB=\{\uuu=(u,u')\in \XXX:\|u-u'\|\le\delta\}. 
$$
Suppose we can construct a probability space~$(\Omega,\FF,\IP)$ and measurable functions $\RR,\RR':\XXX\times\Omega\to X$ such that the following three properties hold for any $\uuu=(u,u')\in\XXX$:
\begin{itemize}
\item[\bf(a)] 
The pair $(\RR(\uuu,\cdot),\RR'(\uuu,\cdot))$ is a coupling for $(P_1(u,\cdot),P_1(u',\cdot))$. 
\item[\bf(b)] 
If $\uuu\notin\BBB$, then the random variables $\RR(\uuu,\cdot)$ and $\RR'(\uuu,\cdot)$ are independent. 
\item[\bf(c)] 
If $\uuu\in\BBB$, then 
\begin{equation} \label{1.14}
\IP\{\|\RR(\uuu)-\RR'(\uuu)\|>r\|u-u'\|\}\le C\,\|u-u'\|^\alpha,
\end{equation}
where  $r<1$, $C$, and $\alpha\le1$ are positive numbers not depending on~$\uuu$. 
\end{itemize}
In this case, the discrete-time Markov process $(\uuu_k,\bP_\uuu)$ with the time-$1$ transition function
\begin{equation} \label{1.15}
\bP_1(\uuu,\Gamma)
=\IP\bigl\{(\RR(\uuu,\cdot),\RR'(\uuu,\cdot))\in\Gamma\bigr\}, 
\quad \uuu\in\XXX, \quad \Gamma\in\BB(\XXX),
\end{equation}
is an extension for~$(u_k,\IP_u)$ that satisfies the recurrence and squeezing properties of Theorem~\ref{t4.2} (see Steps~1 and~2 below), so that we can conclude. 

The construction of~$\RR$ and~$\RR'$ is trivial for $\uuu\notin\BBB$: it suffices to take two independent $E$-valued random variables~$\eta$ and~$\eta'$ with the law~$\ell$ and to define
\begin{equation} \label{1.16}
\RR(\uuu)=S(u)+\eta, \quad \RR(\uuu)=S(u')+\eta',\quad \uuu=(u,u'). 
\end{equation}
The key point is  the construction of the pair~$\RRR=(\RR,\RR')$ when $\uuu\in\BBB$ and the proof of~\eqref{1.14}. It is based on an estimate of a cost function (Lemma~\ref{l4.6}) and an abstract result on the existence of measurable coupling associated with a cost (Proposition~\ref{p4.5}). We now turn to a detailed proof, which is divided into three steps.

\medskip
{\it Step 1: Recurrence}. 
Suppose we have constructed a pair~$\RRR=(\RR,\RR')$ satisfying properties (a)--(c) given above. Let us show that the Markov process~$(\uuu_k,\bP_\uuu)$ with the transition function~\eqref{1.15} possesses the recurrence property of Theorem~\ref{t4.2}. 

To this end, we first recall a standard construction of the Markov family with the transition function~\eqref{1.15}. Let us define $(\OOmega,\FFF,\bP)$ as the tensor product of countably many copies  the probability space $(\Omega,\FF,\IP)$ on which the pair~$(\RR,\RR')$ is defined. We shall denote by $\oomega=(\omega_1,\omega_2,\dots)$ the points of~$\OOmega$ and write $\omega^{(k)}=(\omega_1,\dots,\omega_k)$. Let us define a family $\{\RRR_k(\uuu),k\ge0,\uuu\in\XXX\}$ recursively by the relation 
\begin{equation} \label{1.17}
\RRR_k(\uuu,\oomega)
=\bigl(\RR_k(\uuu,\oomega),\RR_k'(\uuu,\oomega)\bigr)
=\RRR\bigl(\RRR_{k-1}(\uuu,\oomega),\omega_k\bigr), \quad k\ge1,
\end{equation}
which implies, in particular, that~$\RRR_k$ depends only on~$\omega^{(k)}$. It is straightforward to check that the sequences $\{\RRR_k(\uuu)\}_{k\ge0}$ defined on the probability space $(\OOmega,\FFF,\bP)$ form a Markov process with the transition function~\eqref{1.15}. 

To prove inequality~\eqref{4.10} for the first hitting time~$\ttau=\ttau(\BBB)$ of the set~$\BBB$, it suffices to show that 
\begin{equation} \label{1.18}
\bP\{\RRR_m(\uuu,\cdot)\in\BBB\}\ge p
\quad\mbox{for any $\uuu\in\XXX$},
\end{equation}
where the integer $m\ge1$ and the number~$p>0$ do not depend on~$\uuu$. Indeed, once this inequality is established, a simple application of the Markov property will imply that 
$$
\bP\{\RRR_{km}(\uuu,\cdot)\notin\BBB\mbox{ for $1\le k\le j$}\}\le (1-p)^j
\quad\mbox{for any $\uuu\in\XXX$, $j\ge1$}. 
$$
The required inequality follows now from the Borel--Cantelli lemma. 

Inequality~\eqref{1.18} would be a simple consequence of the approximate controllability to a given point if the processes $\{\RR_k(\uuu),k\ge0\}$ and $\{\RR_k'(\uuu),k\ge0\}$ were independent. However, this is not the case, and we have to proceed differently. We shall need the following auxiliary results established at the end of this section. Given an integer $k\ge1$, let~$X^k$ be the direct product of~$k$ copies of~$X$ and let $T_k:=\{\ttau\ge k\}$. 

\begin{lemma} \label{l1.3}
For any $m\ge0$, the random variables $\{\RR_k(\uuu),k=0,\dots,m\}$ and $\{\RR_k'(\uuu),k=0,\dots,m\}$ valued in~$X^{m+1}$ are independent on the set $T_m$; that is, for any $\Gamma,\Gamma'\in\BB(X^{m+1})$, we have
\begin{multline} \label{1.19}
\bP_\uuu\bigl\{(\RR_0,\dots,\RR_m)\in\Gamma,
(\RR_0',\dots,\RR_m')\in\Gamma'\,|\,T_m\bigr\}\\
=\bP_\uuu\bigl\{(\RR_0,\dots,\RR_m)\in\Gamma\,|\,T_m\bigr\}
\bP_\uuu\bigl\{(\RR_0',\dots,\RR_m')\in\Gamma'\,|\,T_m\bigr\}.
\end{multline}
\end{lemma}

\begin{lemma} \label{l1.4}
There is $C_1>0$ such that, for any $\uuu\in\BBB$, we have
\begin{equation} \label{1.20}
\bP\bigl\{\|\RR_k(\uuu)-\RR_k'(\uuu)\|\le r^k\|u-u'\|\mbox{ for all }k\ge0\bigr\}
\ge 1-C_1\|u-u'\|^\alpha,
\end{equation}
where $\alpha$ and~$r$ are the numbers entering~\eqref{1.14}.  
\end{lemma}

Taking these lemmas for granted, we prove~\eqref{1.18}. Let $m\ge1$ be the integer entering the hypothesis of approximate controllability with $\e=\delta/2$; see~\eqref{1.3}. We claim that~\eqref{1.18} holds with this choice of~$m$ and a sufficiently small~$p>0$. To prove this, we write
\begin{equation} \label{1.21}
\bP\{\RRR_m(\uuu)\in\BBB\}
=\bP\bigl(\{\RRR_m(\uuu)\in\BBB\}\cap T_m^c\bigr)
+\bP\bigl(\{\RRR_m(\uuu)\in\BBB\}\cap T_m\bigr).
\end{equation}
In view of the strong Markov property, we have
\begin{align}
\bP\bigl(\{\RRR_m(\uuu)\in\BBB\}\cap T_m^c\bigr)
&=\bE\bigl(I_{\BBB}(\RRR_m(\uuu))I_{T_m^c}\bigr)\notag\\
&=\bE\bigl(I_{T_m^c}\bE\{I_{\BBB}(\RRR_m(\uuu))\,|\,\FFF_\ttau\}\bigr)
\notag\\
&=\bE\bigl(I_{T_m^c}\bP_{\RRR_{\ttau}(\uuu)}\{\RRR_k\in\BBB\}|_{k=m-\ttau}\bigr). 
\label{1.22}
\end{align}
Since $\RRR_\ttau\in\BBB$, it follows from~\eqref{1.20} that the probability on the right-hand side of~\eqref{1.22} is bounded below by $1-C_1\delta^\alpha$. Combining this with~\eqref{1.21}, we see that
\begin{equation} \label{1.23}
\bP\{\RRR_m(\uuu)\in\BBB\}
\ge (1-C_1\delta^\alpha)\bP_\uuu(T_m^c)
+\bP\{\RRR_m(\uuu)\in\BBB\,|\,T_m\bigr\}\bP(T_m). 
\end{equation}
Let us fix a small number~$\nu>0$ (it will be chosen below) and assume first that $\bP(T_m^c)\ge\nu$. In this case, we obtain
$$
\bP\{\RRR_m(\uuu)\in\BBB\}\le (1-C_1\delta^\alpha)\,\nu>0,
$$
provided that $\delta>0$ is sufficiently small. Thus, we can assume that $\bP(T_m^c)\le\nu$, so that $\bP(T_m)\ge1-\nu$. Denoting by~$Q\subset X$ the closed ball of radius~$\delta/2$ centred at $\hat u$ (where $\hat u\in X$ is the point entering the hypothesis of approximate controllability) and using Lemma~\ref{l1.3}, we can write
\begin{align}
\bP\{\RRR_m(\uuu)\in\BBB\,|\,T_m\bigr\}
&\ge \bP\{\RR_m(\uuu)\in Q, \RR_m'(\uuu)\in Q\,|\,T_m\bigr\}
\notag\\
&=\bP\{\RR_m(\uuu)\in Q\,|\,T_m\bigr\}\bP\{\RR_m'(\uuu)\in Q\,|\,T_m\bigr\}. 
\label{1.24} 
\end{align}
Suppose we found $\varkappa>0$ such that 
\begin{equation} \label{1.25}
\bP\{\RR_m(\uuu)\in Q\,|\,T_m\bigr\}\ge\varkappa, \quad
\bP\{\RR_m'(\uuu)\in Q\,|\,T_m\bigr\}\ge\varkappa
\quad\mbox{for all $\uuu\in\XXX$}. 
\end{equation}
In this case, combining~\eqref{1.23}--\eqref{1.25}, we obtain
$$
\bP\{\RRR_m(\uuu)\in\BBB\}\ge \varkappa^2(1-\nu). 
$$
Thus, it remains to establish inequalities~\eqref{1.25}. We confine ourselves to the first one, since the proof of the other is similar. 

The approximate controllability to~$\hat u$ combined with a standard argument implies that 
$$
\beta:=\inf_{\uuu\in\XXX}\bP\{\RR_m(\uuu)\in Q\}>0. 
$$
Assuming that the parameter $\nu>0$ fixed above is smaller than~$\beta$, for any $\uuu\in\XXX$ we derive
\begin{align*}
\beta\le \bP\{\RR_m(\uuu)\in Q\bigr\}
&\le \bP\{\RR_m(\uuu)\in Q\,|\,T_m\bigr\}\bP(T_m)+\bP(T_m^c)\\
&\le \bP\{\RR_m(\uuu)\in Q\,|\,T_m\bigr\}+\nu,
\end{align*}
whence we conclude that~\eqref{1.25} holds with $\varkappa=\beta-\nu$. 

\medskip
{\it Step 2: Squeezing}. 
We now prove that~$(\uuu_k,\bP_\uuu)$ satisfies the squeezing property of Theorem~\ref{t4.2}. Namely, we claim that inequalities~\eqref{4.11} hold for the Markov time
$$
\sigma(\uuu)=\min\{k\ge0:\|\RR_k(\uuu)-\RR_k(\uuu)\|> r^k \delta\},
$$
provided that $\delta>0$ is sufficiently small. 

We first note that, if $\delta\le 1$ and $\uuu\in\BBB$, then
$$
\{\sigma(\uuu)=+\infty\}\supset
\bigl\{\|\RR_k(\uuu)-\RR_k(\uuu)\|\le r^k\|u-u'\|\mbox{ for }k\ge0\bigr\}. 
$$
Hence, the first inequality in~\eqref{4.11} follows immediately from~\eqref{1.20}, provided that $C_1\delta^\alpha<1$. 

Let us prove the second inequality in~\eqref{4.11}. To this end, note that, for $k\ge0$, we have
$$
\{\sigma(\uuu)=k+1\}\subset
\bigl\{\|\RR_k(\uuu)-\RR_k(\uuu)\|\le\delta r^k,
\|\RR_{k+1}(\uuu)-\RR_{k+1}(\uuu)\|>\delta r^{k+1}\bigr\}. 
$$
Applying the Markov property and using~\eqref{1.14}, we derive
\begin{align*}
\bP\{\sigma(\uuu)=k+1\}
&\le \bE\Bigl(I_{G_k(\uuu)}\bP\bigl\{\|\RR_1(\vvv)-\RR_1'(\vvv)\|
>\delta r^{k+1}\bigr\}\big|_{\vvv=\RRR_k(\uuu)}\Bigr)\\
&\le C(\delta r^k)^\alpha\bP\{G_k(\uuu)\}, 
\end{align*}
where we set $G_k(\uuu)=\{\|\RR_k(\uuu)-\RR_k(\uuu)\|\le\delta r^k\}$. Choosing $\delta\in(0,1]$ so small that $C\delta^\alpha\le 1$, we see that
$$
\bP\{\sigma(\uuu)=k\}\le r^{\alpha(k-1)}\quad\mbox{for any $k\ge1$}. 
$$
It follows that the second inequality in~\eqref{4.11} holds for $\delta_2<\alpha \ln r^{-1}$. 

\medskip
{\it Step 3: Construction of~$(\RR,\RR')$}. To complete the proof, it remains to construct the pair~$(\RR,\RR')$ and to prove~\eqref{1.14}. To this end, we shall use Propositions~\ref{p4.4} and~\ref{p4.5}. 

Let us consider the pair of probability measures $(P_1(u,\cdot),P_1(u',\cdot))$ on~$X$ depending on the parameter $\uuu\in\BBB$. Fix any number $r\in(q,1)$, where~$q\in(0,1)$ is the constant in~\eqref{1.6}, and define the function $\e(\uuu)=r\|u-u'\|$. Applying Proposition~\ref{p4.5} with $\theta=q/r$, we can construct a pair of random variables $(\RR(\uuu,\cdot),\RR'(\uuu,\cdot))$ on the same probability space $(\Omega,\FF,\IP)$ such that (see~\eqref{4.32}) 
\begin{equation} \label{1.26}
\IP\{\|\RR(\uuu,\cdot)-\RR'(\uuu,\cdot)\|>r\|u-u'\|\}
\le C_{q\|u-u'\|}\bigl(P_1(u,\cdot),P_1(u',\cdot)\bigr),
\end{equation}
where $\uuu=(u,u')\in\BBB$. We now use Proposition~\ref{p4.4} and Lemma~\ref{l4.6} to estimate the right-hand side of this inequality. 

Let us fix $R>0$ so large that $\KK\subset B_E(R)$. In view of local stabilisability, one can find a finite-dimensional subspace $\EE\subset E$ and a mapping $\varPhi:\BBB\times B_E(R)\to\EE$ such that~\eqref{1.5} and~\eqref{1.6} hold. The measures $P_1(u,\cdot)$ and $P_1(u',\cdot)$ coincide with the laws of the random variables $S(u,\xi)$ and $S(u',\xi)$ defined on the probability space $(E,\BB(E),\ell)$, where $\xi:E\to E$ is the identity mapping. By Lemma~\ref{l4.6}, in which $\e=\e(\uuu)=q\|u-u'\|$, we have
\begin{equation} \label{1.27}
C_{q\|u-u'\|}\bigl(P_1(u,\cdot),P_1(u',\cdot)\bigr)
\le 2\|\ell-\varPsi_*(\ell)\|_{\text{var}},
\end{equation}
where $\varPsi(\zeta)=\zeta+\varPhi(u,u',\zeta)$. Using now Proposition~\ref{p4.4} and inequality~\eqref{1.5}, we see that 
$$
\|\ell-\varPsi_*(\ell)\|_{\text{var}}\le C_1\|u-u'\|^\alpha,
$$
where $C_1>0$ does not depend on~$u$ and~$u'$. Combining this inequality with~\eqref{1.27} and~\eqref{1.26}, we arrive at the required inequality~\eqref{1.14}. The proof of Theorem~\ref{t1.2} is complete. 
\end{proof}

\begin{proof}[Proof of Lemma~\ref{l1.3}]
Let us note that the event~$T_m$ can be written as
$$
T_m=\{\ttau\ge m\}
=\bigl\{\bigl(\RRR_0(\uuu),\dots,\RRR_m(\uuu)\bigr)\in \bG\bigr\},
$$ 
where $\bG=\BBB^c\times\cdots\times\BBB^c\times\XXX$, and the set~$\BBB^c$ is  repeated~$m$ times. Furthermore, on the set~$T_m$, we have
$$
\bigl(\RR_0(\uuu),\dots,\RR_m(\uuu)\bigr)=F_u(\zzeta), \quad
\bigl(\RR_0'(\uuu),\dots,\RR_m'(\uuu)\bigr)=F_{u'}(\zzeta'),
$$
where $F_v:E^m\to X^{m+1}$ is a continuous function depending on~$v\in X$, and~$\zzeta$ and~$\zzeta'$ are independent $E^m$-valued random variables. It follows that~\eqref{1.19} is equivalent to
\begin{multline*}
\bP\bigl\{F_u(\zzeta)\in\Gamma,F_{u'}(\zzeta')\in\Gamma'\,|\,
(F_u(\zzeta),F_{u'}(\zzeta'))\in\bG\bigr\}\\
=\bP\bigl\{F_u(\zzeta)\in\Gamma\,|\,
(F_u(\zzeta),F_{u'}(\zzeta'))\in\bG\bigr\}
\bP\bigl\{F_{u'}(\zzeta')\in\Gamma'\,|\,
(F_u(\zzeta),F_{u'}(\zzeta'))\in\bG\bigr\}.
\end{multline*}
This relation is easily checked for sets~$\bG\in\BB(\XXX^{m+1})$ of the form $\bG=G\times G'$, where $G,G'\in\BB(X^{m+1})$. The general case can be derived with the help of the monotone class lemma. 
\end{proof}

\begin{proof}[Proof of Lemma~\ref{l1.4}]
Inequality~\eqref{1.14} implies that 
\begin{equation} \label{1.51}
\bP\bigl\{\|\RR(\uuu)-\RR'(\uuu)\|\le r\|u-u'\|\bigr\}\ge1-C\|u-u'\|^\alpha
\quad\mbox{for $\uuu=(u,u')\in\BBB$}.
\end{equation}
Let us define the sets
\begin{equation} \label{1.52}
\Gamma_n(\uuu)
=\bigl\{\|\RR_k(\uuu)-\RR_k'(\uuu)\|\le r\|\RR_{k-1}(\uuu)-\RR_{k-1}'(\uuu)\|
\mbox{ for }1\le k \le n\bigr\}.
\end{equation}
Combining~\eqref{1.51} with the Markov property, for $\uuu\in\BBB$ we derive 
\begin{align} 
\bP\bigl(\Gamma_n(\uuu)\bigr)
&=\bE\bigl(I_{\Gamma_n(\uuu)}
\bP\bigl\{\|\RR_n(\uuu)-\RR_n'(\uuu)\|\le r\|\RR_{n-1}(\uuu)-\RR_{n-1}'(\uuu)\|\,
|\,\FFF_{n-1}\bigr\}\bigr)
\notag\\
&\ge\bE\bigl(I_{\Gamma_n(\uuu)}
(1-C\|\RR_{n-1}(\uuu)-\RR_{n-1}'(\uuu)\|^\alpha)\bigr\}.
\label{1.53}
\end{align}
It follows from~\eqref{1.52} that, on the set~$\Gamma_n(\uuu)$, we have
$$
\|\RR_k(\uuu)-\RR_k'(\uuu)\|\le r^k\|u-u'\|\quad\mbox{for $0\le k\le n$}. 
$$
Substituting this into~\eqref{1.53}, we derive
$$
\bP\bigl(\Gamma_n(\uuu)\bigr)
\le \bigl(1-Cr^{\alpha(n-1)}\|u-u'\|^\alpha\bigr)\bP\bigl(\Gamma_{n-1}(\uuu)\bigr).
$$
Iteration of  this inequality results in
\begin{equation}
\bP\bigl(\Gamma_n(\uuu)\bigr)
\ge \prod_{k=0}^{n-1} \bigl(1-Cr^{\alpha k}\|u-u'\|^\alpha\bigr)
\ge 1-2C(1-r^\alpha)^{-1}\|u-u'\|^\alpha,\label{1.54}
\end{equation}
provided that $\uuu=(u,u')\in\BBB$ and the number~$\delta>0$ is sufficiently small.
The left-hand side of~\eqref{1.20} is minorised by the probability of $\cap_{n\ge1}\Gamma_n(\uuu)$, and therefore the required estimate follows from~\eqref{1.54}.
\end{proof}

\section{Initial-boundary value problem for the Navier--Stokes system}
\label{s3}
In this section, we study the Cauchy problem for the 2D Navier--Stokes equations, supplemented with an inhomogeneous boundary condition. This type of results are rather well known in the literature (e.g., see the paper~\cite{FGH-2002} and the references therein), so that some of the proofs are only sketched. The additional properties of the resolving operator that are established in this section will be important when proving the exponential mixing of the random flow associated with the 2D Navier--Stokes system.

\subsection{Resolving operator for the Cauchy problem}
\label{s3.0}
Let $D\subset\R^2$ be a bounded domain with infinitely smooth boundary~$\p D$ such that
\begin{equation} \label{3.2}
D=\widetilde D\setminus\biggl(\,\bigcup_{i=1}^m\overline{D}_i\biggr),
\end{equation}
\begin{wrapfigure}{r}{0.38\textwidth}
\vspace{-33pt}
\begin{center}
\begin{tikzpicture}
\draw[domain=0:360,smooth,samples=100] plot (\x:{2+sin(3*\x)/4+cos(2*\x)/6+cos(3*\x+25)/7});
\draw[fill=gray!30!white] (0,1) circle(0.4) node {$D_1$}; 
\draw[fill=gray!30!white] (1,-0.1) circle(0.35) node {$D_2$}; 
\draw[fill=gray!30!white] (-0.5,-0.3) circle(0.45) node {$D_3$}; 
\draw (0,-1.3) node {$D$}; 
\end{tikzpicture}
\end{center}
\captionof{figure}{The domain~$D$\label{pic1}}
\end{wrapfigure}
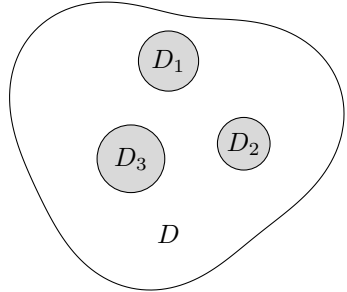%
where $D_1,\dots,D_m$ and~$\widetilde D$ are simply-connected domains  in~$\R^2$ satisfying the properties $\overline{D}_i\subset \widetilde D$ and $\overline{D}_i\cap \overline{D}_j=\varnothing$ for all $i\ne j$; see Figure~\ref{pic1}. Thus, $D$ is a ``domain with~$m$ holes.''  

We consider problem~\eqref{0.4}, \eqref{0.5}, supplemen\-ted with the initial condition
\begin{equation} \label{3.1}
u(0,x)=u_0(x). 
\end{equation}
Let us recall some well-known results on the initial-boundary value problem, specifying, in particular, the functional spaces for~$u_0$ and~$\eta$. 

Let us set $J=[0,1]$ and $\Sigma=J\times\p D$ and define $G\subset L^2(\Sigma,\R^2)$ as the space of functions that are restrictions to~$\Sigma$ of divergence-free vector fields~$u(t,x)$ in~$J\times D$ such that~\eqref{0.11} holds. The space~$G$ is endowed with the natural norm of the quotient space, and its explicit description is given in Section~\ref{s4.4}. Namely, we have 
\begin{equation} \label{3.3}
G=\Bigl\{v\in L^2(J,H^{5/2}): 
\p_tv \in L^2(J,H^{1/2}), \int_{\p D}\langle v(t),\nnn_x\rangle
\,\dd\sigma=0\mbox{ for }t\in J\Bigr\},
\end{equation}
where $H^s=H^s(\p D,\R^2)$ denotes the Sobolev space of order~$s\ge0$ and~$\nnn_x$ stands for the outward unit normal to~$\p D$ at the point~$x$. We shall also need a larger space~$G_s$ (with $3/2\le s\le 2$) defined as the space of functions $v\in L^2(J,H^{s+1/2})$ such that $\p_tv\in L^2(J,H^{s-3/2})$ and  $\int_{\p D}\langle v(t),\nnn_x\rangle\,\dd\sigma=0$ for $t\in J$, so that $G=G_2$. Let us introduce the space 
\begin{equation} \label{XX}
\XX=\bigl\{v\in L^2(J,H_\sigma^{2}): \p_t v\in L^2(J,L_\sigma^2)\bigr\},	
\end{equation}
where $H_\sigma^s=H_\sigma^s(D,\R^2)$ stands for the space of divergence-free vector fields on~$D$ with components belonging to the Sobolev space of order~$s\ge0$, and~$L_\sigma^2=H_\sigma^0$. The following proposition provides a sufficient condition for the well-posedness of the initial-boundary value problem for the Navier--Stokes equations and establishes some properties of the resolving operator. Since these results are important for what follows, we give rather detailed proofs. 

\begin{proposition} \label{p3.1}
For any initial function~$u_0\in V:=H_\sigma^1\cap H_0^1$ and any boundary function $\eta\in G$ vanishing at $t=0$, problem~\eqref{0.4}, \eqref{0.5}, \eqref{3.1} has a unique solution $u\in \XX$, and the resolving operator $\SSS:V\times G\to \XX$ taking $(u_0,\eta)$ to $u(t,x)$ is infinitely differentiable in the Fr\'echet sense. Moreover, the following properties hold.
\begin{itemize}
\item[\bf(a)] 
The mapping~$\SSS$ is continuous and is bounded on bounded subsets. Moreover, its restriction to any ball in~$V\times G$ is Lipschitz-continuous from\,\footnote{The space $L_\sigma^2\times G_s$ is certainly not optimal for the validity of Lipschitz continuity of~$\SSS$. However, it is sufficient for our purposes.}~$L_\sigma^2\times G_s$ to~$C(J,L^2)$ for any $s\in(\frac32,2]$.  
\item[\bf(b)] 
Suppose, in addition, that~$\eta$ belongs to the space
\begin{equation} \label{3.38}
G(\tau_0)=\{\xi\in G: \supp\xi\subset[\tau_0,1]\times\p D\},
\end{equation}
where $\tau_0>0$ is a number, and for $\tau\in(0,1)$, let $\SSS^\tau(u_0,\eta)$ be the restriction of~$\SSS(u_0,\eta)$ to~$J_\tau=[\tau,1]$. Then $\SSS^\tau(u_0,\eta)\in C(J_\tau,W^{1,q})$ for any\,\footnote{A finer analysis shows that this inclusion is valid for any $q\ge2$; however, we do not need that result.} $q\in[2,4)$, and the corresponding norm remains bounded as $(u_0,\eta)$ varies in a bounded subset of~$V\times G(\tau_0)$. 
\end{itemize}
\end{proposition}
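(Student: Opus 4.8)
\textbf{Proof proposal for Proposition~\ref{p3.1}.}
The plan is to reduce the inhomogeneous boundary problem~\eqref{0.4},~\eqref{0.5},~\eqref{3.1} to a homogeneous one by a lifting, and then to invoke the standard well-posedness theory for the 2D Navier--Stokes equations with a source term, keeping careful track of the regularity and Lipschitz dependence. First I would choose a \emph{lifting} of the boundary datum: since $\eta\in G$, by the description~\eqref{3.3} and the trace theory recalled in Section~\ref{s4.4} there is a divergence-free vector field $w=w(t,x)$ on $J\times D$, depending linearly and boundedly on $\eta$, with $w|_{\p D}=\eta$, $w\in L^2(J,H^3_\sigma)$, $\p_tw\in L^2(J,H^1_\sigma)$, and $w(0)=0$ (the latter because $\eta$ vanishes at $t=0$). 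Writing $u=w+v$, the unknown $v$ then solves a Navier--Stokes-type system with homogeneous Dirichlet condition, initial datum $v(0)=u_0\in V$, and a right-hand side
\[
f=-\p_tw+\nu\Delta w-\langle w,\nabla\rangle w-\langle v,\nabla\rangle w-\langle w,\nabla\rangle v
\]
together with the usual nonlinearity $\langle v,\nabla\rangle v$; all the terms involving only $w$ lie in $L^2(J,L^2)$, with norm controlled by $\|\eta\|_G$.

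Next I would set up the fixed-point / Galerkin argument for $v$ in the space $\XX$ of~\eqref{XX}. The energy and enstrophy estimates for the 2D Navier--Stokes equations, combined with the regularity $f\in L^2(J,L^2)$ and $u_0\in V=H^1_\sigma\cap H^1_0$, yield $v\in C(J,V)\cap L^2(J,H^2_\sigma)$ with $\p_tv\in L^2(J,L^2)$, hence $v\in\XX$ and $u=w+v\in\XX$; uniqueness follows from a standard Gronwall argument on the difference of two solutions. Smoothness of the resolving operator $\SSS:V\times G\to\XX$ in the Fréchet sense comes from the implicit function theorem: the map $(u_0,\eta,v)\mapsto(\text{equation for }v,\ v(0)-u_0)$ is a polynomial (hence $C^\infty$) map between the relevant Banach spaces, and its differential in $v$ at a solution is an isomorphism by the linear theory (well-posedness of the linearised Navier--Stokes system with $L^2(J,L^2)$ source). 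Boundedness of $\SSS$ on bounded sets is read off from the a priori bounds.

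For part~(a), the Lipschitz continuity from $L^2_\sigma\times G_s$ to $C(J,L^2)$ on balls: given two data pairs in a fixed ball of $V\times G$, subtract the equations, test the difference $\delta u$ against itself, and estimate the nonlinear terms using that both solutions already lie in the bounded set $C(J,V)\cap L^2(J,H^2)$ (so their $H^2$, hence $L^\infty$ and $W^{1,4}$, norms are uniformly controlled) — this is exactly where the weaker $G_s$ topology on the boundary perturbation suffices, since the liftings $w$ of the $\eta$-difference need only be estimated in $L^2(J,H^{s})$-type norms with $s>3/2$, and a Gronwall inequality then closes the estimate in $C(J,L^2)$. For part~(b), the extra regularity away from $t=0$: once $\supp\eta\subset[\tau_0,1]\times\p D$, on $[0,\tau_0]$ the solution is just the homogeneous Navier--Stokes flow from $u_0\in V$, which instantly regularises, so at time $\tau_0/2$ (say) the solution lies in $H^2_\sigma$ with bounded norm; restarting from there and using parabolic smoothing together with $2$D Sobolev embeddings $H^2\hookrightarrow W^{1,q}$ for every $q<\infty$ (and maximal-regularity-type bounds for the Stokes operator with the now-$H^1$-in-time, $H^3$-in-space lifting $w$), one obtains $\SSS^\tau(u_0,\eta)\in C(J_\tau,W^{1,q})$ for $q\in[2,4)$ with norm bounded on bounded subsets of $V\times G(\tau_0)$.

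The main obstacle I expect is the bookkeeping in part~(a): getting Lipschitz continuity \emph{in the weak $C(J,L^2)$ norm} while only assuming the boundary data are close in the \emph{weaker} $G_s$ metric, $s\in(3/2,2]$, rather than in $G=G_2$. This forces one to absorb the loss of one-half derivative in the lifting into the parabolic smoothing of the equation and to exploit that the ambient solutions are a priori strong (so that all the troublesome nonlinear terms can be put on the strong factor), rather than to estimate naively term by term; the choice of the exponent $s>3/2$ is dictated precisely by the trace/lifting estimates combined with the $2$D Ladyzhenskaya inequality in the nonlinear terms.
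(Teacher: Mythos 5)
Your proposal follows essentially the same route as the paper: the boundary datum is lifted by the divergence-free extension operator of Proposition~\ref{p4.7} (the paper's $\zeta=Q\eta$), well-posedness and part~(a) are obtained from the $L^2$ and $H^1$ energy estimates for the shifted system with homogeneous boundary condition together with Gronwall's lemma, Fr\'echet smoothness comes from the implicit function theorem, and part~(b) rests on the $L^q$-smoothing of the Stokes semigroup applied to the equation rewritten with source $h=(\nu\Delta-\p_t)\zeta-\langle u,\nabla\rangle u$. The one imprecise point is the origin of the restriction $q<4$ in part~(b): the solution is not continuous in time with values in $H^2$, so the embedding $H^2\subset W^{1,q}$ is not the operative mechanism; rather, the contribution of $(\nu\Delta-\p_t)\zeta\in L^2(J,H^1)$ gives, after interpolation, continuity in time with values only in $H^{1+r}$ for $r<\tfrac12$, whence $q=\tfrac{2}{1-r}<4$.
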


Iterating the mapping~$\SSS$ constructed in Proposition~\ref{p3.1}, we obtain a global solution~$u(t,x)$ of problem~\eqref{0.4}, \eqref{0.5}, \eqref{3.1} for any initial function~$u_0\in V$ and boundary forcing $\eta(t,x)$ whose restriction to any interval~$J_k=[k-1,k]$ belongs to the space defined in~\eqref{3.3} with $J=J_k$ and vanishes at the endpoints. We shall write~$S_t(u_0,\eta)$ for the value of~$u$ at time~$t$, so that $S_t(u_0,\eta)\in H_\sigma^1$ for $t\ge0$ and $S_k(u_0,\eta)\in V$ for any integer~$k\ge0$.

\begin{remark} \label{r3.2}
For $\tau\in(0,1)$, let~$\XX^\tau$ be the space~$\XX$ considered on~$J_\tau=[\tau,1]$. 
Analysing the proof given below, it is easy to see that, in Proposition~\ref{p3.1}, one can take any initial condition~$u_0$ belonging to the space 
\begin{equation} \label{3.30}
H=\{u\in L^2(D,\R^2):\diver u=0\mbox{ in $D$}, 
\langle u,\nnn\rangle=0 \mbox{ on $\p D$}\}. 
\end{equation}
In this case, the solution will be less regular in an arbitrarily small neighbourhood of $t=0$. However, it will belong to~$\XX^\tau$ for any $\tau\in(0,1)$, the operator~$\SSS^\tau$ will be continuous and bounded from~$H\times G$ to~$\XX^\tau$, and property~(b) will be true with~$V$ replaced by~$H$ .  
\end{remark}

\subsection{Proof of Proposition~\ref{p3.1}}
The uniqueness of a solution in the space~$\XX$ is standard and can be proved by taking the inner product in~$L^2$ of the equation for the difference $u=u_1-u_2$ between two solutions with~$u$. Let us outline the proof of the existence of a solution and the regularity of the resolving operator. 

\smallskip
We seek a solution of~\eqref{0.4}, \eqref{0.5}, \eqref{3.1} in the form $u=\zeta+v$, where $\zeta=Q\eta$ is an extension of~$\eta$ to the cylinder $J\times D$; see Proposition~\ref{p4.7} for the definition of the operator~$Q$. Then $v(t,x)$ must satisfy the equations
\begin{align} 
\p_tv+\langle \zeta+v,\nabla\rangle (\zeta+v)-\nu\Delta v+\nabla p&=(\nu\Delta-\p_t)\zeta, 
\quad \diver v=0,\label{3.5}\\
v\bigr|_{\Sigma}=0, \quad v(0,x)&=u_0(x),\label{3.6}
\end{align}
where we used the fact that $(Q\eta)(0)=0$ if $\eta(0)=0$. 
We claim that problem \eqref{3.5}, \eqref{3.6} has a unique solution~$v$ in the space $\YY=\{u\in\XX:u\in C(J,V)\}$. Indeed, Eq.~\eqref{3.5} is a Navier--Stokes type system involving an addition function $\zeta\in\XX_2$; see~\eqref{4.61} for the definition of the spaces~$\XX_k$. The unique solvability of~\eqref{3.5}, \eqref{3.6} in~$\YY$ can be established by repeating the corresponding proof for the 2D Navier--Stokes system; e.g., see Section~5 in~\cite[Chapter~17]{taylor1996}. Thus, we can define the mapping $\SSS(u_0,\eta)=Q\eta+v$, which gives a unique solution of \eqref{3.5}, \eqref{3.6}. Moreover, application of the implicit function theorem shows that the resolving operator taking $(u_0,\eta)\in V\times G$ to~$v\in\YY$ is infinitely differentiable and, hence, so is~$\SSS$; see Theorem~2.4 in~\cite{kuksin-1982} for the more complicated 3D case. To complete the proof, it remains to establish properties~(a) and~(b).

\medskip
{\it Proof of~{\rm(a)}}. Since~$Q:G\to\XX_2$ is a continuous linear operator that can be extended to a continuous operator from~$G_s$ to~$\XX_s$ for any $s>3/2$ (see Remark~\ref{r3.8}), it suffices to show that the mapping $\boR:V\times \XX_2\to\YY$ taking $(u_0,\zeta)$ to~$v$ is continuous, is bounded on bounded subsets, and satisfies the inequality
\begin{equation} \label{LC}
\|\boR(u_{01},\zeta_1)-\boR(u_{02},\zeta_2)\|_{C(J,L^2)}
\le C_s(R)\bigl(\|u_{01}-u_{02}\|+\|\zeta_1-\zeta_2\|_{G_s}\bigr),
\end{equation}
where $u_{0i}\in V$ and $\zeta_i\in\XX_2$ are arbitrary functions whose norms are bounded by a number~$R$.  

We first derive an estimate for the norm of~$v$ in the space $C(J,H)\cap L^2(J,V)$. Denoting $h=(\nu\Delta-\p_t)\zeta$, taking the scalar product in~$L^2$ of the first equation in~\eqref{3.5} with~$2v$, and carrying out some standard transformations, we derive
\begin{equation} \label{3.7}
\p_t\|v\|^2+2\nu\|\nabla v\|^2
=2(h,v)-2\bigl(\langle \zeta+v,\nabla\rangle\zeta,v\bigr). 
\end{equation}
When~$\eta$ varies in a bounded set in~$G$, the norms of~$\zeta$ and~$h$ in the spaces~$\XX_2$ and~$L^2(J,H^1)$, respectively, remain bounded by a number~$M$. Furthermore, H\"older's inequality and Sobolev's embeddings enable one to show that
\begin{align}
|(h,v)|&\le \frac{\nu}{4}\|\nabla v\|^2+C_1\|h\|^2,
\label{3.42}\\
\bigl|(\langle \zeta,\nabla\rangle\zeta,v)\bigr|
&\le\bigl|(\langle \zeta,\nabla\rangle v,\zeta)\bigr|
\le C_1\|\nabla v\|\,\|\zeta\|_{L^4}^2\notag\\
&\le \frac{\nu}{8}\|\nabla v\|^2+2C_1^2\nu^{-1}\|\zeta\|_{L^4}^4, 
\label{3.43}\\
\bigl|(\langle v,\nabla\rangle\zeta,v)\bigr|
&\le C_1\|\zeta\|_1\|v\|_{L^4}^2\le C_2\|\zeta\|_1\|v\|\,\|\nabla v\|
\notag\\
&\le \frac{\nu}{8}\|\nabla v\|^2+2C_2^2\nu^{-1}\|\zeta\|_1^2\|v\|^2. 
\label{3.44}
\end{align}
Substituting these inequalities into~\eqref{3.7}, we derive
$$
\p_t\|v\|^2+\nu\|\nabla v\|^2
\le C_3(\nu)\bigl(\|h\|^2+\|\zeta\|_{L^4}^4+\|\zeta\|_1^2\|v\|^2\bigr),
$$
whence, by Gronwall's inequality, we obtain
\begin{equation} \label{3.8}
\sup_{t\in J}\Bigl(\|v(t)\|^2+\int_0^t\|v(s)\|_1^2\dd s\Bigr)
\le C_4(\nu,M)\bigl(\|u_0\|^2+1\bigr).
\end{equation}

\smallskip
We now establish the boundedness of the norm of~$v$ in~$\XX$. To this end, we denote by $\Pi:L^2\to H$ Leray's projection to the space~$H$ of divergence-free vector fields tangent to the boundary (see~\eqref{3.30}) and take the scalar product in~$L^2$ of the first equation in~\eqref{3.5} with the function $-2\Pi\Delta v$. This results in
\begin{equation} \label{3.9}
\p_t\|\nabla v\|^2+2\nu\|\Pi\Delta v\|^2
=-2(h,\Pi\Delta v)-2\bigl(\langle \zeta+v,\nabla\rangle(\zeta+v),\Pi\Delta v\bigr).
\end{equation}
By Schwarz's inequality, 
\begin{align*}
|(h,\Pi\Delta v)|&\le  \frac{\nu}{4}\|\Pi\Delta v\|^2+C_1\|h\|^2,\\
\bigl|\bigl(\langle v+\zeta,\nabla\rangle(v+\zeta),\Pi\Delta v\bigr)\bigr|
&\le \e\|v\|_2^2+C_1\bigl\|(\zeta+v)\otimes(\zeta+v)\bigr\|_1^2,
\end{align*}
where $w\otimes w$ denotes the $2\times 2$ matrix with entries $w_iw_j$, and $\e>0$ is a small parameter. Using Sobolev's embeddings and interpolation inequalities, the boundedness of~$\zeta$ in~$C(J,H^2)$, as well as~\eqref{3.8}, we derive
\begin{align*}
\bigl\|(\zeta+v)\otimes(\zeta+v)\bigr\|_1^2
&\le \|\zeta+v\|_1^2\|\zeta+v\|\,\|\zeta+v\|_2\\
&\le \e\|v\|_2^2+C_5(\e,\nu,M)\bigl(\|u_0\|^2+1\bigr)\bigl(\|v\|_1^4+1\bigr),
\end{align*}
Recalling that the norms $\|\Pi\Delta v\|$ and~$\|v\|_2$ are equivalent and substituting the above inequalities into~\eqref{3.9}, we obtain
$$
\p_t\|\nabla v\|^2+C_6\nu\|v\|_2^2
\le C_7\bigl(\|h\|^2+(\|u_0\|^2+1)(\|v\|_1^4+1)\bigr). 
$$
Using again Gronwall's inequality and~\eqref{3.8}, we derive
\begin{equation} \label{3.10}
\sup_{t\in J}\Bigl(\|v(t)\|_1^2+\int_0^t\|v(s)\|_2^2\dd s\Bigr)
\le C_8\bigl(\nu,M,\|u_0\|_1\bigr). 
\end{equation}
Finally, applying Leray's projection~$\Pi$ to the first equation in~\eqref{3.5} and taking the~$L^2$ norm, we easily conclude that $\|\p_tv\|_{L^2(J\times D)}$ also remains bounded. We have thus proved that~$\boR:V\times\XX_2\to\XX$ is a bounded mapping. 

\smallskip
It remains to prove the continuity of~$\boR$ and inequality~\eqref{LC}. Let us take two pairs $(u_{0i},\zeta_i)$, $i=1,2$, and denote 
$$
v_i=\boR(u_{0i},\zeta_i),\quad u_i=\zeta_i+v_i, \quad v=v_1-v_2,
\quad \zeta=\zeta_1-\zeta_2.
$$ 
Then  $v\in\XX\cap C(J,V)$ is a solution of the equation
\begin{equation} \label{3.11}
\p_tv+\langle \zeta+v,\nabla\rangle u_1+\langle u_2,\nabla\rangle (\zeta+v)-\nu\Delta v+\nabla p=h:=(\nu\Delta-\p_t)\zeta. 
\end{equation}
Taking the scalar product in~$L^2$ of Eq.~\eqref{3.11} and the function $-2\Pi\Delta v$ and using some estimates similar to those above, we establish that~$\boR:V\times\XX_2\to\XX$ is Lipschitz continuous on every bounded subset. Finally, to prove~\eqref{LC}, it suffices to take the scalar product in~$L^2$ of Eq.~\eqref{3.11} with~$v$ and to carry out standard arguments. 

\smallskip
{\it Proof of~{\rm(b)}}. 
We shall need a result from the theory of the non-autonomous Stokes equations  in~$L^q$ spaces. Namely, we consider the problem 
\begin{equation} \label{2.016}
\p_t v-\nu\Delta v+\nabla p=h(t,x), \quad \diver v=0, \quad x\in D,
\end{equation}
supplemented with the initial and boundary conditions~\eqref{3.6}. Let us denote by~$e^{tL_q}$ the resolving semigroup of the homogeneous problem (corresponding to $h\equiv0$) with an initial condition~$u_0\in L^q\cap H$ and by~$L_q$ the corresponding generator, which is a closed operator in $L^q\cap H$. In view of Proposition~1.2 in~\cite{GM-1985} (see also Theorem~2 in~\cite{giga-1981}), the operator~$e^{tL_q}$ is continuous from~$L^q\cap H$ to the domain~$\DD(L_q^\alpha)$ of the operator~$L_q^\alpha$ for any $\alpha\ge0$ and $t>0$, and
\begin{equation} \label{2.017}
\bigl\|e^{tL_q}\bigr\|_{\LL(L^q\cap H,\DD(L_q^\alpha))}\le C_{q,\alpha}t^{-\alpha}. 
\end{equation}
In view of Duhamel's formula, the solution~$v(t,x)$ for problem~\eqref{2.016}, \eqref{3.6} with $u_0\in V$ and $h\in L^s(J,L^q)$ can be written as
\begin{equation} \label{2.018}
v(t)=e^{tL_q}u_0+\int_0^te^{(t-\theta)L_q}(\Pi h)(\theta)\,\dd\theta.
\end{equation}
Since the projection $\Pi:L^2\to H$ is continuous from~$L^q$ to~$L^q\cap H$ for any $q\in(2,\infty)$, and~$\DD(L_q^\alpha)$ is continuously embedded into~$W^{2\alpha,q}(D)$, it follows from~\eqref{2.017} and~\eqref{2.018} that, for any $s>2$, we have
\begin{align}
\|v(t)\|_{W^{1,q}}&\le C_9t^{-1/2}\|u_0\|_{L^q}
+C_9\int_0^t (t-\theta)^{-1/2}\|h(\theta)\|_{L^q}\dd\theta\notag\\ 
&\le C_{10}\bigl(t^{-1/2}\|u_0\|_V+\|h\|_{L^s(J,L^q)}\bigr), \quad t\in J, 
\label{2.019}
\end{align}
where we used H\"older's inequality and the continuity of the embedding $V\subset L^q$ for $1\le q<\infty$. It follows, in particular, that~$v$ is a continuous function on the interval~$(0,1]$ with range in~$W^{1,q}$. 

On the other hand, if $h\in L^2(J,H^r)$ for some $r\in(0,\frac12)$, then for any $u_0\in V$ problem~\eqref{2.016}, \eqref{3.6} has a unique solution $v\in\XX$, which belongs to $L^2(J_\tau,H^{r+2}\cap V)\cap W^{1,2}(J_\tau,H^r)$ for any $\tau\in(0,1)$. By interpolation, this space is embedded into $C(J_\tau, H^{r+1})$, which is a subspace of~$C(J_\tau,W^{1,q})$ with $q=\frac{2}{1-r}$. Moreover, we have an analogue of inequality~\eqref{2.019}:
\begin{equation}
	\|v(t)\|_{W^{1,q}}
\le C_{11}\bigl(t^{-1/2}\|u_0\|_V+\|h\|_{L^2(J,H^r)}\bigr), \quad t\in J. 
\label{2.0190}
\end{equation}

We now go back to the regularity of $\SSS^\tau(u_0,\eta)$. Since $u=\zeta+v$, where $\zeta\in\XX_2\subset C(J,H^2)$, the required properties will be established if we prove that they hold for the solution $v\in\XX_1$ of problem~\eqref{3.5}, \eqref{3.6}. Let us rewrite~\eqref{3.5} in the form~\eqref{2.016}, where 
\begin{equation} \label{2.20}
h(t,x)=h_1+h_2, \quad h_1:=(\nu\Delta-\p_t)\zeta, \quad h_2:=-\langle u,\nabla\rangle u. 
\end{equation}
We claim that $h_1\in L^2(J,H^1)$, $h_2\in L^s(J,L^q)$ for any $q<+\infty$ and some $s=s_q>2$, and 
\begin{equation} \label{2.21}
\|h_1\|_{L^2(J,H^1)}+\|h_2\|_{L^s(J,L^q)}\le C_{12}\bigl(\|\eta\|_G+\|u\|_{\XX_1}^2\bigr).
\end{equation}
In view of~\eqref{2.019} and~\eqref{2.0190}, this will imply all the required properties. 

Since $\zeta\in\XX_2$, the function~$h_1$ belongs to the space $L^2(J,H^1)$, and its norm is bounded by~$\|\eta\|_G$. Furthermore, since  $u\in \XX_1$, we have 
$$
u\in C(J,H^1), \quad \nabla\otimes u\in C(J,L^2)\cap L^2(J,H^1),
$$
and the corresponding norms are bounded by~$\|u\|_{\XX_1}$. Using the interpolation inequality $\|w\|_{L^p}\le C_{13}\|w\|^{2/p}\|w\|_1^{1-2/p}$ and the continuous embedding $H^1\subset L^p$, we derive
$$
\|\langle u,\nabla\rangle u\|_{L^q}\le \|u\|_{L^{\lambda q}}\|\nabla\otimes u\|_{L^{\lambda q/(\lambda-1)}}
\le C_{14}\|u\|_1^{1+r_\lambda}\|u\|_2^{1-r_\lambda}, 
$$
where $\lambda\in (1,\infty)$ is arbitrary, $r_\lambda=\frac{2(\lambda-1)}{\lambda q}$, and~$C_{14}$ depends only on~$\lambda$ and~$q$. Given any $q\in(2,\infty)$, we choose $s>2$ such that $q<\frac{2s}{s-2}$ and set $\lambda=\frac{2s}{2s-qs+2q}$, so that $(1-r_\lambda)s=2$. In this case, 
\begin{equation*}
\|h_2\|_{L^s(J,L^q)}\le C_{15} \|u\|_{C(J,H^1)}^{1+r_\lambda}\|u\|_{L^2(J,H^2)}^{1-r_\lambda}\le C_{16}\|u\|_{\XX_1}^2. 
\end{equation*}
This completes the proof~\eqref{2.21} and that of the proposition.

\section{Exponential mixing for the Navier--Stokes system with boundary noise}
\label{s5}

In this section, we apply Theorem~\ref{t1.2} to the 2D Navier--Stokes system driven by a boundary noise. We first formulate the main result and outline the key steps of the proof. The details are given in Sections~\ref{s3.2} and~\ref{proof-p26}. 

\subsection{Main result}
\label{s3.1}
Let us consider problem~\eqref{0.4}, \eqref{0.5}, in which~$\eta$ is a random process of the form~\eqref{0.6}. It is assumed that~$\{\eta_k\}$ entering~\eqref{0.6} is a sequence of i.i.d.\ random variables in~$G$ such that $\eta_k(k-1)=0$ almost surely for any~$k\ge1$. It follows from Proposition~\ref{p3.1} that, for any $V$-valued random variable~$u_0$, there is a unique random process~$u(t,x)$ whose almost every trajectory satisfies the inclusions
$$
u\in L_{\mathrm{loc}}^2(\R_+,H_\sigma^2), \quad 
\p_tu\in L_{\mathrm{loc}}^2(\R_+,L_\sigma^2) 
$$
and Eqs.~\eqref{0.4}, \eqref{0.5}, and~\eqref{3.1}. To formulate our main result, we define the outside lateral boundary $\widetilde\Sigma=(0,1)\times\p\widetilde D$ and introduce the following condition concerning the law~$\ell$ of the random variables~$\eta_k$. 

\medskip
{\bf Structure of the noise.}\ 
{\sl There is an open subset $\Sigma_0\subset\widetilde\Sigma$ whose closure~$\overline{\Sigma}_0$ is compact  in $\widetilde\Sigma$ such that the support of~$\ell$ is contained in the vector space $G(\Sigma_0):=\{v\in G:\supp v\subset\overline{\Sigma}_0\}$. Moreover, there exists an orthonormal basis~$\{\varphi_j\}$ in~$G(\Sigma_0)$, a sequence of non-negative numbers $\{b_j\}$, and independent scalar random variables $\xi_{jk}$ with values in~$[-1,1]$ such that 
\begin{equation} \label{3.31}
\eta_k(t,x)=\sum_{j=1}^\infty b_j\xi_{jk}\varphi_j(t,x), 
\quad \BBBB:=\sum_{j=1}^\infty b_j^2<\infty. 
\end{equation}
Finally, there are non-negative functions $p_j\in C^1(\R)$ such that 
\begin{equation} \label{3.32}
p_j(0)\ne0, \quad \DD(\xi_{jk})=p_j(r)\,\dd r\quad\mbox{for any $j\ge1$}.
\end{equation}
}

\noindent
This hypothesis implies that the random perturbation~$\eta$ is space-time localised  in~$\Sigma_0$ (so that the perturbation acts only through the boundary\,\footnote{Our result remains true in the more general setting when the random perturbation may be non-zero on the boundaries of the interior domains~$D_i$, $i=1,\dots,m$. In this case, however, one should add the condition that the circulation (i.e., the integral of the normal velocity) is zero on the boundary of each of the domains~$D_i$; cf.\ Proposition~\ref{p4.8}.}~$\p\widetilde D$) and possesses some regularity properties. The following theorem, which is the main result of this paper, shows that if the law of~$\eta_k$ is sufficiently non-degenerate, then the corresponding random flow is exponential mixing. Recall that the space~$\XX$ is defined by~\eqref{XX}. 

\begin{theorem} \label{t3.1}
Let the above hypotheses be satisfied and let $\BBBB_0>0$ be any fixed number. In this case, for any $\nu>0$ there is an integer $N_\nu\ge1$ such that, if
\begin{equation} \label{ND}
\BBBB\le \BBBB_0, \qquad  b_j\ne0\quad\mbox{for $j=1,\dots,N_\nu$},
\end{equation}
then the following property holds: there is a measure $\mmu_\nu\in\PP(\XX)$ and positive numbers~$C_\nu$ and~$\gamma_\nu$ such that, for any $u_0\in V$, the solution~$u(t,x)$ of~\eqref{0.4}, \eqref{0.5}, \eqref{3.1} satisfies the inequality
\begin{equation} \label{3.33}
\|\DD(\uuu_k)-\mmu_\nu\|_L^*
\le C_\nu e^{-\gamma_\nu k} \quad\mbox{for $k\ge C_\nu\log(1+\|u_0\|_1)$},
\end{equation}
where $\uuu_k$ stands for the restriction of~$u(t+k-1)$ to~$[0,1]$, and the dual-Lipschitz norm~$\|\cdot\|_L^*$ is taken over the space~$\XX$. Moreover, for any $V$-valued random variable~$u_0$  independent of~$\eta$, we have
\begin{equation} \label{3.13}
\|\DD(u(t))-\mu_\nu(\bar t)\|_L^*
\le C_\nu e^{-\gamma_\nu t}\bigl(1+\E\,\|u_0\|_1\bigr), \quad t\ge0,
\end{equation}
where $\bar t\in[0,1)$ stands for the fractional part of~$t\ge0$, $\mu_\nu(s)\in\PP(H_\sigma^1)$ denotes the projection of~$\mmu_\nu$ to the time $t=s$,  and the dual-Lipschitz norm~$\|\cdot\|_L^*$ is taken over the space~$H_\sigma^1$. 
\end{theorem}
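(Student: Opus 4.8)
The plan is to deduce Theorem~\ref{t3.1} from the abstract result of Theorem~\ref{t1.2}, applied to the discrete-time Markov chain $u_k:=S_1(u_{k-1},\eta_k)$ on the space $V=H_\sigma^1\cap H_0^1$ obtained by sampling the solution of~\eqref{0.4}--\eqref{0.5} at integer times; here $S_1$ is the time-$1$ value of the resolving operator of Proposition~\ref{p3.1} (hence $C^1$-smooth on $V\times G$, being the composition of $\SSS$ with the bounded linear evaluation at $t=1$), and $\eta_k\in G$, vanishing at $t=0$, is the restriction of the boundary forcing to the $k$-th unit interval. Set $\KK=\supp\ell$ and let $E$ be the closed linear span in $G(\Sigma_0)$ of the vectors $\varphi_j$ with $b_j\ne0$. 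The first task is to construct a compact phase space: using the dissipative energy estimates for the boundary-driven Navier--Stokes system (the homogeneous problem with $\eta\equiv0$ is exponentially stable), the parabolic smoothing implicit in the proof of Proposition~\ref{p3.1}, and the compactness of $\KK$ in $G$, one shows that there is $\rho>0$ with $S_1(\overline B_V(\rho)\times\KK)\subset\overline B_V(\rho)$, that this ball is absorbing with entrance time $k_0\le C_\nu\log(1+\|u_0\|_1)$, and that $\{S_1(u_0,\eta):\|u_0\|_1\le\rho,\ \eta\in\KK\}$ is bounded in a space compactly embedded in $V$, so that $X:=\overline{S_1(\overline B_V(\rho)\times\KK)}\cup\{0\}$ is a compact subset of $V$ with $S_1(X\times\KK)\subset X$. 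The decomposability of $\ell$ holds with $F_n=\lspan\{\varphi_1,\dots,\varphi_n\}$ (enumerating the $\varphi_j$ with $b_j\ne0$): by~\eqref{3.31} and the independence of the $\xi_{jk}$, $\ell$ is the tensor product of its orthogonal projections ${\mathsf P}_{n*}\ell$ and ${\mathsf Q}_{n*}\ell$, and ${\mathsf P}_{n*}\ell$ has the $C^1$ density $r\mapsto\prod_{j=1}^n b_j^{-1}p_j(r_j/b_j)$ by~\eqref{3.32}; condition~\eqref{ND} ensures that $\varphi_1,\dots,\varphi_{N_\nu}$ are exactly the first $N_\nu$ of these basis vectors. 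Finally, global approximate controllability to $\hat u=0\in X$ is immediate: $0\in\KK$ since $p_j(0)\ne0$, and under the zero control the flow decays to $0$ uniformly on $\overline B_V(\rho)$, so for $m$ large the zero control steers every point of $X$ to within $\e$ of $0$.

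The heart of the proof is the verification of the local stabilisability property~\eqref{1.5}--\eqref{1.6}, with stabilising controls valued in $\EE:=\lspan\{\varphi_1,\dots,\varphi_{N_\nu}\}$. Given $u,u'$ close and a realised forcing $\eta$, I would seek $\eta'=\varPhi(u,u',\eta)\in\EE$ such that the difference $w=S_1(u,\eta)-S_1(u',\eta+\eta')$ is contracted by a factor $q<1$, with $\varPhi$ continuous in $(u,u',\eta)$, $C^1$ in $\eta$, and of size $O(d(u,u')^\alpha)$ for some $\alpha\in(0,1]$. The construction is by linearisation: the difference of two solutions obeys a linearised, Oseen-type Navier--Stokes system, and after extension of the boundary control to the interior the control acts as a forcing localised in $\Sigma_0$; it is produced from an observability inequality of Carleman type for the adjoint linearised problem with observation on $\Sigma_0$, which yields a bounded control operator depending smoothly on the base trajectory, hence~\eqref{1.5}. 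The decisive point---and the step I expect to be the main obstacle---is that only \emph{finitely many} modes must be actively controlled: the dissipation makes the projection of $w$ onto the orthogonal complement of the first $N$ eigenmodes contract automatically as soon as $N$ is large relative to $\nu$ and to the (fixed) size of $X$, so that stabilising only the low-mode component of $w$ renders the full time-$1$ map a contraction; this fixes the threshold $N_\nu$ in~\eqref{ND}. Proving the boundary observability inequality, establishing the $C^1$-dependence of $\varPhi$ on the noise required by the definition, and controlling every constant uniformly over $(u,u')$ in a $\delta$-neighbourhood of the diagonal of $X\times X$ is the technically most demanding part of the argument, and it coincides with the local exponential stabilisation result announced in the introduction.

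Once all hypotheses of Theorem~\ref{t1.2} are verified on $X$, that theorem provides a unique stationary measure $\mu_{\mathrm{st}}\in\PP(X)$ for the integer-time chain together with the bound $\|\PPPP_k^*\lambda-\mu_{\mathrm{st}}\|_L^*\le Ce^{-\gamma k}$ for all $\lambda\in\PP(X)$. To obtain the trajectory statement~\eqref{3.33} I would set $\mmu_\nu:=\int_X(\SSS(v,\cdot))_*\ell\,\mu_{\mathrm{st}}(\dd v)\in\PP(\XX)$; since $\SSS$ is $C^1$ from $V\times G$ to $\XX$, for every $f\in L_b(\XX)$ the function $v\mapsto\E_{\eta\sim\ell}f(\SSS(v,\eta))$ is bounded and Lipschitz on the compact set $X$, uniformly in $\|f\|_L\le1$, so that the dual-Lipschitz distance over $\XX$ satisfies $\|\DD(\uuu_k)-\mmu_\nu\|_L^*\le C\|\DD(u(k-1))-\mu_{\mathrm{st}}\|_L^*$; combining this with the exponential mixing of the chain and with the inclusion $\DD(u(k))\in\PP(X)$ valid for $k\ge k_0$ yields~\eqref{3.33} after a relabelling of the constants. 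The continuous-time bound~\eqref{3.13} follows by writing $t=k-1+\bar t$, $k=\lceil t\rceil$ (with $k=t+1$ when $t\in\Z$), and applying the evaluation map $\mathrm{ev}_{\bar t}:\XX\to H_\sigma^1$, which is bounded and Lipschitz uniformly in $\bar t\in[0,1)$ because $\XX\hookrightarrow C(J,H_\sigma^1)$; with $\mu_\nu(s):=(\mathrm{ev}_s)_*\mmu_\nu$ this gives $\|\DD(u(t))-\mu_\nu(\bar t)\|_L^*\le C\|\DD(\uuu_k)-\mmu_\nu\|_L^*$. Replacing $\gamma_\nu$ by $\min(\gamma_\nu,1/C_\nu)$ turns the threshold estimate into the uniform bound $C_\nu e^{-\gamma_\nu t}(1+\|u_0\|_1)$ for deterministic $u_0$; conditioning on $u_0$ and integrating over its law then yields~\eqref{3.13} for an arbitrary $V$-valued $u_0$ independent of $\eta$.
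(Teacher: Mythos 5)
Your overall architecture --- compact invariant absorbing set, reduction to the integer-time chain on $X$, verification of the hypotheses of Theorem~\ref{t1.2} with $\hat u=0$ and $\KK=\supp\ell$, and the transfer to $\XX$ and to continuous time via Lipschitz images of measures and the evaluation map --- coincides with Steps~1--3 of the paper's proof in Section~\ref{s3.2}, and those parts are sound. The gap lies in the step you yourself single out as decisive: why a control valued in the \emph{finite-dimensional} subspace $\EE=\lspan\{\varphi_1,\dots,\varphi_{N_\nu}\}$ suffices for \eqref{1.6}. You invoke a Foias--Prodi/determining-modes mechanism: dissipation contracts the high-mode part of $w$ automatically, so only its low-mode projection needs active stabilisation, and $N_\nu$ is the determining-modes count. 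That mechanism is tailored to forcings acting \emph{directly} on the first $N$ eigenmodes of the Stokes operator in $D$. Here the $\varphi_j$ form an orthonormal basis of the boundary-data space $G(\Sigma_0)$, and there is no diagonal correspondence between boundary controls in $\lspan\{\varphi_1,\dots,\varphi_N\}$ and the low eigenmodes of the state; ``cancelling the low-mode component of $w$'' with such boundary data is not a solvable finite-dimensional problem, and the threshold $N_\nu$ cannot be read off a determining-modes count.

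The paper's mechanism is different. One first builds a \emph{full} (infinite-dimensional) stabilising boundary control $\varPhi'(u_0,u_0',\eta)$ by the domain-extension trick: extend $D$ through the controlled arc $\Gamma\subset\p\widetilde D$ to a larger domain $D'$, solve a distributed control problem for system \eqref{2.46}, \eqref{2.47} with control supported in $D_0\subset D'\setminus\overline D$ (Proposition~\ref{p2.5}, resting on the Carleman/observability inequality of \cite{FGIP-2004,shirikyan-asens2015} --- note the extension goes from the boundary outward to create room for a distributed control, not ``from the boundary control to the interior'' as you describe), and take the trace of the resulting solution on $\p D$. The essential point is that this control lands in a space $F$ \emph{compactly embedded} in $G_s$ for some $s\in(\frac32,2)$, while the resolving operator is Lipschitz from $L_\sigma^2\times G_s$ to $C(J,L^2)$ (Proposition~\ref{p3.1}(a)). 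One then sets $\varPhi={\mathsf P}_N\varPhi'$ and absorbs the truncation error $\|(I-{\mathsf P}_N)\varPhi'\|_{G_s}\le\alpha_N\|\varPhi'\|_F$, with $\alpha_N\to0$ by compactness; $N_\nu$ is chosen so that $\varkappa+C\alpha_N$, amplified by the regularising constant of the flow on $[\tau,1]$, stays below $q<1$. This compactness-plus-truncation argument is the piece you must supply in place of the determining-modes heuristic; without it, inequality \eqref{2.42} (hence \eqref{1.6}) is not established for any finite $N$, and the dependence of $N_\nu$ on $\nu$ and $\BBBB_0$ remains unjustified.
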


Let us note that if $b_j\ne0$ for all $j\ge1$, then the result is true for any $\nu>0$. 
We also remark that the $H^1$-regularity of the initial condition~$u_0$ is not really needed: we can take any $H$-valued function~$u_0$ independent of~$\eta$ (see~\eqref{3.30} for the definition of~$H$), and the regularisation property of the Navier--Stokes flow will ensure that $u(t)\in H_\sigma^1$ almost surely for any $t>0$. 

A detailed proof of Theorem~\ref{t3.1} is given in the next two subsections. Here we outline briefly the main idea. 

The dissipativity of the 2D Navier--Stokes system driven by a circulation-free boundary forcing enables one to prove that any solution of~\eqref{0.4}--\eqref{0.6} satisfies the inequality
\begin{equation} \label{3.34}
\|u(t)\|_1\le C_1(e^{-\alpha t}\|u_0\|_1+1), \quad t\ge0,
\end{equation}
where $C_1$ and~$\alpha$ are positive numbers depending only on~$\nu$. It follows that the stochastic flow restricted to integer times possesses a compact invariant absorbing set $X\subset V$. Furthermore, since~$\{\eta_k\}$ is a sequence of i.i.d.\ random variables in~$G(\Sigma_0)$, the family of all trajectories issued from~$X$ and restricted to integer times form a Markov process~$(u_k,\IP_v)$. The key point of the proof is the verification of the hypotheses of Theorem~\ref{t1.2} for~$(u_k,\IP_v)$, from which we conclude that inequality~\eqref{4.12} holds for the corresponding Markov semigroup. Combining this with a result on the behaviour of the dual-Lipschitz metric under a Lipschitz mapping, we arrive at~\eqref{3.33}. Finally, inequality~\eqref{3.13} is a simple consequence of~\eqref{3.33}. 

\subsection{Proof of Theorem~\ref{t3.1}}
\label{s3.2}

{\it Step~1: Compact absorbing invariant set\/}. 
We claim that the random flow generated by~\eqref{0.4}, \eqref{0.5} possesses a compact invariant absorbing set. More precisely, there is a compact set $X\subset V$ such that
\begin{align}
&\IP\{S_k(u_0,\eta)\in X\mbox{ for any }k\ge0\}=1&\quad
&\mbox{for any $u_0\in X$},\label{3.35}\\
&\IP\{S_k(u_0,\eta)\in X\mbox{ for }k\ge C_2\log(\|u_0\|_1+3)\}=1&
\quad&\mbox{for any $u_0\in V$},
\label{3.36}
\end{align}
where $C_2\ge1$ does not depend on~$u_0$. To this end, it suffices to establish~\eqref{3.34}. Indeed, if~\eqref{3.34} is proved, then we have
\begin{equation} \label{3.37}
\|S_t(u_0,\eta)\|_1\le R\quad\mbox{for $t\ge T(\|u_0\|_1)$},
\end{equation}
where $R=2C_1$ and $T(r)=\alpha^{-1}\log(r+1)$. It follows from~\eqref{3.31} that the support~$\KK$ of the law of~$\eta_k$ is a compact subset of~$G$ that is included in $G(\tau_0)$ for some~$\tau_0>0$ (see~\eqref{3.38}). Let us denote by~$k_0(R)\ge1$ the least integer larger than $\alpha^{-1}\log(2R+1)$ and define
\begin{equation} \label{3.038}
X=\bigcup_{k=1}^{k_0(R)}\aA_k(R,\KK),
\end{equation}
where the sets~$\aA_k$ are defined recursively by the relations
$$
\aA_1(R,\KK)=S(B_V(R),\KK), \quad \aA_k(R,\KK)=S(\aA_{k-1}(R,\KK),\KK)
\quad\mbox{for $k\ge2$},
$$
and $S=S_1$. The regularising property of the flow for the homogeneous Navier--Stokes system implies that each of the sets~$\aA_k(R,\KK)$ is compact, and therefore so is their finite union~$X$. Relations~\eqref{3.35} and~\eqref{3.36} follow immediately from~\eqref{3.37} and the definition of~$X$. 

\smallskip
To prove~\eqref{3.34}, we first establish an estimate for the~$L^2$ norm of solutions. Namely, we claim that 
\begin{equation} \label{3.39}
\|S_t(u_0,\eta)\|\le C_3(e^{-\alpha t}\|u_0\|+1), \quad t\ge0,
\end{equation}
where $C_3>0$ does not depend on~$u_0\in V$. Indeed, let us fix $\e>0$ and denote by $Q_\e:G(\Sigma_0)\to\XX_2$ the continuous linear operator constructed in Proposition~\ref{p4.8}. We now define a random process~$\zeta_\e$ by the relation
\begin{equation} \label{3.41}
\zeta_\e(t)=(Q_\e\eta_k)(t-k+1)\quad\mbox{for $t\in[k-1,k]$ and $k\ge1$}. 
\end{equation}
It follows from~\eqref{4.62} that 
\begin{equation} \label{3.40}
\bigl|\bigl(\langle v,\nabla\rangle \zeta_\e (t),v\bigr)_{L^2}\bigr|
\le C_4\e\,\|v\|_1^2\quad\mbox{for any $v\in V$}, 
\end{equation}
where $C_4 =\sup_{t\in J}\|\eta(t)\|_{3/2}<\infty$. Let us represent a solution~$u=S_t(u_0,\eta)$ of~\eqref{0.4}--\eqref{0.6} in the form $u=\zeta_\e+v$. Then~$v$ must be a solution of problem~\eqref{3.5}, \eqref{3.6}, in which $\zeta=\zeta_\e$. Taking the scalar product in~$L^2$ of the first equation in~\eqref{3.5} with the function~$2v$, we obtain Eq.~\eqref{3.7} in which $\zeta=\zeta_\e$. Using~\eqref{3.42}, \eqref{3.43}, and~\eqref{3.40} and choosing $\e>0$ sufficiently small, we derive 
$$
\p_t\|v\|^2+\nu\|\nabla v\|^2
\le C_5\bigl(\|h\|^2+\|\zeta_\e\|_{L^4}^4\bigr). 
$$
Application of Gronwall's inequality completes the proof of~\eqref{3.39}. 

We now prove~\eqref{3.34}. Since~$\zeta_\e(t)$ is bounded in~$H^1$, it suffices to establish inequality~\eqref{3.34} with $u=v$. Its validity for $0\le t\le1$ follows immediately from~\eqref{3.10}. Assuming now that $t\ge1$, we write $v(t)=R_1(v(t-1),\zeta_\e^t)$, where $R_t:H\times\XX_2\to H$ denotes the resolving operator for~\eqref{3.5}, \eqref{3.6}, and $\zeta_\e^t$ stands for the   function $s\mapsto\zeta_\e(s-t+1)$. Combining this with the regularising property for~$R_1$ (e.g., Theorem~6.2 in~\cite[Chapter~1]{BV1992}) and the boundedness of the norm of the function~$\zeta_\e^t$ in the space~$\XX_2$, we see that 
\begin{equation} \label{3.45}
\|v(t)\|_1\le C_6\bigl(\|v(t-1)\|+1).
\end{equation}
On the other hand, it follows from~\eqref{3.39} and the boundedness of the $L^2$ norm of~$\zeta_\e(t)$ that 
$$
\|v(t)\|\le C_7(e^{-\alpha t}\|u_0\|+1)\quad\mbox{for all $t\ge0$}. 
$$
Combining this with~\eqref{3.45}, we arrive at~\eqref{3.34}. 

\medskip
{\it Step~2: Reduction to the dynamics at integer times\/}. 
In view of~\eqref{3.35}, we can consider the discrete-time Markov process~$(u_k,\IP_v)$ defined by~\eqref{1.1} in the phase space~$X$. Suppose we have shown that~$(u_k,\IP_v)$ has a unique stationary measure $\mu_\nu\in\PP(X)$, which is exponentially mixing in the dual-Lipschitz metric over the space~$X$, so that we have inequality~\eqref{4.12}, in which $\mu=\mu_\nu$ and~$\PPPP_k^*$ denotes the Markov semigroup associated with~$(u_k,\IP_v)$. Let us denote by~$\mmu_\nu\in\PP(\XX)$ the image of the product measure $\mu_\nu\otimes\ell\in\PP(X\times G)$ under the mapping $(u,\eta)\mapsto \SSS(u,\eta)$. 
We claim that both~\eqref{3.33} and~\eqref{3.13} hold. To prove this, we shall use the following lemma, whose proof follows immediately from the definition of the dual-Lipschitz distance.

\begin{lemma} \label{l2.4}
\begin{itemize}
\item[\bf(i)]
Let $X_1$ and~$X_2$ be Polish spaces and let $F:X_1\to X_2$ be a $C$-Lipschitz mapping. Then, for any $\mu,\mu'\in\PP(X_1)$, we have
\begin{equation} \label{3.14}
\|F_*(\mu)-F_*(\mu')\|_L^*\le C\|\mu-\mu'\|_L^*,
\end{equation}
where the dual-Lipschitz metrics on the left- and right-hand sides are taken over the spaces~$X_2$ and~$X_1$, respectively.  
\item[\bf(ii)]
Let $X$ and~$G$ be Polish spaces and let $\mu,\mu'\in\PP(X)$ and~$\lambda\in\PP(G)$ be some measures. Then
\begin{equation} \label{3.15}
\|\mu\otimes\lambda-\mu'\otimes\lambda\|_L^*=\|\mu-\mu'\|_L^*. 
\end{equation}
\end{itemize}
\end{lemma}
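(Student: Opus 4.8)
The plan is to reduce both identities to the defining supremum formula for $\|\cdot\|_L^*$ together with two elementary observations about how the norm $\|\cdot\|_L$ behaves under precomposition with a Lipschitz map and under adjoining a dummy variable; no serious obstacle is expected, and the whole argument is bookkeeping around the definitions.

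For part (i), I would start from the change-of-variables identity $(f,F_*\mu)=(f\circ F,\mu)$, valid for every bounded continuous $f:X_2\to\R$, so that
\[
\|F_*(\mu)-F_*(\mu')\|_L^*=\sup_{\|f\|_L\le1}\bigl|(f\circ F,\mu)-(f\circ F,\mu')\bigr|.
\]
It then suffices to check that $f\mapsto f\circ F$ maps the unit ball of $L_b(X_2)$ into the ball of radius $C$ of $L_b(X_1)$; here one may assume $C\ge1$ without loss of generality, since a $C$-Lipschitz map is $C'$-Lipschitz for any $C'\ge C$ and in all our applications $C\ge1$. For $f$ with $\|f\|_L\le1$ one has $\|f\circ F\|_\infty\le\|f\|_\infty$, and for any $u,v$ with $0<d_{X_1}(u,v)\le1$ the Lipschitz property of $F$ gives $d_{X_2}(F(u),F(v))\le C\,d_{X_1}(u,v)$; combining the Lipschitz bound for $f$ on points at distance $\le1$ with the trivial bound by $2\|f\|_\infty$ for points farther apart yields $|f(F(u))-f(F(v))|\le C\|f\|_L\,d_{X_1}(u,v)$, hence $\|f\circ F\|_L\le C$. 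Since $|(g,\mu)-(g,\mu')|\le\|g\|_L\,\|\mu-\mu'\|_L^*$ for every $g\in L_b(X_1)$, taking $g=f\circ F$ and then the supremum over $f$ gives \eqref{3.14}.

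For part (ii) I would fix the product metric $d\bigl((x,g),(x',g')\bigr)=d_X(x,x')+d_G(g,g')$ on $X\times G$ and prove the two inequalities separately. For ``$\le$'': given $h\in L_b(X\times G)$ with $\|h\|_L\le1$, for each fixed $g$ the slice $h(\cdot,g)$ lies in the unit ball of $L_b(X)$, because restricting to a fixed second coordinate increases neither the supremum norm nor the Lipschitz seminorm (the product metric reduces to $d_X$ on the slice $\{g\}$); hence $|(h(\cdot,g),\mu)-(h(\cdot,g),\mu')|\le\|\mu-\mu'\|_L^*$, and integrating in $g$ against $\lambda$ and using Fubini gives $|(h,\mu\otimes\lambda)-(h,\mu'\otimes\lambda)|\le\|\mu-\mu'\|_L^*$; taking the supremum over $h$ yields ``$\le$''. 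For ``$\ge$'': given $f\in L_b(X)$ with $\|f\|_L\le1$, the function $h(x,g):=f(x)$ satisfies $\|h\|_L=\|f\|_L\le1$ and $(h,\mu\otimes\lambda)-(h,\mu'\otimes\lambda)=(f,\mu)-(f,\mu')$, so the left-hand side of \eqref{3.15} dominates $\|\mu-\mu'\|_L^*$. Combining the two inequalities gives the identity.

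If I had to single out a point requiring care, it is the interplay between the restriction $d(u,v)\le1$ in the Lipschitz seminorm and the explicit normalisation of the product metric; this is precisely why the constant in \eqref{3.14} should be read under the convention that the relevant Lipschitz constants are $\ge1$ (an assumption automatically met wherever the lemma is invoked). Everything else is routine.
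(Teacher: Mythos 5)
Your argument is the intended one: the paper offers no proof beyond ``follows immediately from the definition of the dual-Lipschitz distance,'' and your direct verification --- the substitution $f\mapsto f\circ F$ for (i), and the slice-plus-Fubini argument together with the pullback $h(x,g)=f(x)$ for (ii) --- is exactly that verification. Part (ii) is complete and correct as written.

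There is one quantitative slip in part (i), precisely at the point you yourself flag as delicate. Because the Lipschitz seminorm in $\|\cdot\|_L$ is truncated at $d(u,v)\le 1$, in the case $d_{X_2}(F(u),F(v))>1$ one only gets $|f(F(u))-f(F(v))|\le 2\|f\|_\infty\le 2C\|f\|_\infty\,d_{X_1}(u,v)$, so the composition satisfies $\|f\circ F\|_L\le (2C+1)\|f\|_L$ rather than $C\|f\|_L$; assuming $C\ge 1$ does not remove the extra factor. A two-point example ($X_1=\{u,v\}$ with $d(u,v)=1.01/C$, $X_2=\{a,b\}$ with $d(a,b)=1.01$, $F$ the bijection, $\mu=\delta_u$, $\mu'=\delta_v$) shows that the constant $2C+1$ is essentially sharp, so inequality \eqref{3.14} itself is only valid with a constant of this order --- the imprecision is already in the statement of the lemma, not something you introduced. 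Since every invocation of the lemma in the paper only needs \emph{some} constant depending on the Lipschitz constant of $F$, this affects nothing downstream; but your intermediate claim $\|f\circ F\|_L\le C$ should be replaced by $\|f\circ F\|_L\le (2C+1)$, and the conclusion adjusted accordingly.
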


To prove~\eqref{3.33}, let us fix $u_0\in V$. In view of~\eqref{3.36}, there is an integer~$T_0\ge1$ of order $\log\|u_0\|_1$ such that $\IP\{S_{T_0}(u_0,\eta)\in X\}=1$. Therefore, by the Markov property, we can assume from the very beginning that $u_0\in V$ and establish~\eqref{3.33} for all $k\ge0$.  

Inequality~\eqref{4.12} implies that 
\begin{equation} \label{3.16}
\|\DD(u(k))-\mu_\nu\|_L^*\le C_8e^{-\gamma_\nu k},\quad k\ge0,
\end{equation}
where $C_8$ and~$\gamma_\nu$ are some positive numbers, and the dual-Lipschitz norm is taken over the space~$V$. Now note that $\DD(\uuu_k)$ is the image of the product measure $\DD(u(k-1))\otimes\ell$ under the mapping~$\SSS$. Therefore, combining~\eqref{3.16} with~\eqref{3.14} and~\eqref{3.15}, we arrive at~\eqref{3.33}. 

To prove~\eqref{3.13}, we first note that it suffices to consider the case of a deterministic initial condition. Furthermore, since~$\XX$ is continuously embedded into~$C(J,H_\sigma^1)$, the linear application $v\mapsto v(s)$ is continuous from~$\XX$ to~$H_\sigma^1$. Hence, it follows from~\eqref{3.33} and assertion~(i) of Lemma~\ref{l2.4} that inequality~\eqref{3.13} with a deterministic~$u_0\in V$ holds for $t\ge C_\nu\log(1+\|u_0\|_1)$.  Its validity (with a sufficiently small $\gamma_\nu>0$)  for $t\le  C_\nu\log(1+\|u_0\|_1)$ follows from~\eqref{3.34}.   

\smallskip
Thus, to prove Theorem~\ref{t3.1}, it suffices to show that the hypotheses of Theorem~\ref{t1.2} are satisfied for the discrete-time Markov process~$(u_k,\IP_v)$ with the phase space~$X$.

\medskip
{\it Step~3: Reduction to controllability\/}.
We apply Theorem~\ref{t1.2} in which $H=V$, $E=G(\Sigma_0)$, $S(u,\eta)$ is the time-$1$ resolving operator for problem~\eqref{0.4}, \eqref{0.5}, $X$~is given by~\eqref{3.038}, and~$\KK$ is the support of the law~$\ell$ of the random variables~$\eta_k$.

The hypotheses imposed on~$\ell$ in Theorem~\ref{t1.2} are obviously satisfied (see the description of the structure of~$\eta_k$ in Section~\ref{s3.1}). We thus need to check the  conditions on~$S$. Namely, we shall prove that the global approximate controllability to some point $\hat u\in V$ and local stabilisability are true. 

The global approximate controllability to the point $\hat u=0$ is an easy consequence of the dissipativity of the homogeneous Navier--Stokes problem. Indeed, the solution of problem~\eqref{0.4}, \eqref{0.5} with $\eta\equiv0$ satisfies the inequality 
$$
\|u(t)\|\le e^{-\alpha t}\|u(0)\|\quad\mbox{for $t\ge0$},
$$
where $\alpha>0$ does not depend on~$u$. Combining this with the regularising property of~$S_t(u_0,\eta)$ (e.g., see Theorem~6.2 in~\cite[Chapter~I]{BV1992}), we see that
\begin{equation} \label{2.037}
\|S_k(v,0)\|_1\le C_9e^{-\alpha k}\quad\mbox{for all $k\ge0$, $v\in X$},
\end{equation}
where $C_9>0$ does not depend on~$v$ and~$k$. Since $0\in\KK$, we conclude from~\eqref{2.037} that the global approximate controllability to~$\hat u=0$ is true. 

We now turn to the more complicated property of local stabilisability. To prove it, we shall apply a well-known idea in the control theory of PDEs: we extend the domain through the controlled part of the boundary, establish the required property by a distributed control with support in the extended part, and then define the control for the initial problem by restricting the constructed solution to the boundary; see Chapter~III in~\cite{FI1996}. We describe here the main ideas (omitting some unessential technical details), and give a complete proof in Steps~4 and~5. 

\smallskip
We wish to prove that, given sufficiently close initial conditions $u_0,u_0'\in X$ and a boundary function $\eta\in B_{G(\Sigma_0)}(R)$, one can find $\eta'\in G(\Sigma_0)$ of the form $\eta'=\eta+\varPhi(u_0,u_0',\eta)$ such that the mapping~$\varPhi$ is continuous in $(u_0,u_0',\eta)$ and continuously differentiable in~$\eta$, its image is contained in a finite-dimensional subspace $\EE\subset G(\Sigma_0)$, and we have the inequalities
\begin{align}
\|\varPhi(u_0,u_0',\eta)\|_G+\|D_\eta\varPhi(u_0,u_0',\eta)\|_{\LL(G)}
&\le C\|u_0-u_0\|_1,\label{2.41}\\
\|S(u_0,\eta)-S(u_0',\eta+\varPhi(u_0,u_0',\eta))\|_1&\le q\|u_0-u_0'\|_1,
\label{2.42}
\end{align}
where $C$ and~$q<1$ are positive numbers not depending on $(u_0,u_0',\eta)$. Let us choose numbers $0<a<b<c<1$ and a connected segment~$\Gamma$ of the external boundary of~$D$ such that $[a,c]\times\Gamma\subset\Sigma_0$. Setting $\xi=\varPhi(u_0,u_0',\eta)$, we define $\xi(t)=0$ for $t\le a$. To construct~$\xi$ on~$[a,1]$, we set $u_a=S_a(u_0,\eta)$ and $u_a'=S_a(u_0',\eta)$, and seek a solution of the form $u'=u+w$. Then~$w$ must satisfy the equations 
\begin{gather}
\p_tw+\langle u,\nabla\rangle w +\langle w,\nabla\rangle u+\langle w,\nabla\rangle w
-\nu\Delta w+\nabla p=0, \quad \diver w=0,\label{2.43}\\
w\bigr|_{\p D}=\xi, \quad w(a)=w_a:=u_a'-u_a.\label{2.44}
\end{gather}
Note that $w_a\in V$. Suppose we found~$\xi$, with appropriate regularity and bound on its norm, such that 
\begin{equation} \label{2.45}
\|w(b)\|_1\le \e\|w_a\|_1,
\end{equation}
where $\e>0$ is sufficiently small. We then extend~$\xi$ to~$[b,1]$ so that its norm is still controlled and $\xi(t)=0$ for $t\ge c$. All required properties are then derived from the above description.

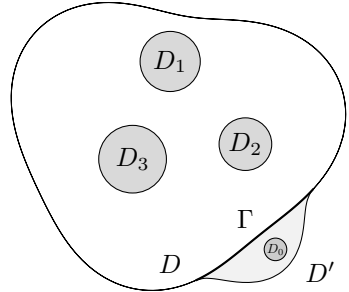
\begin{wrapfigure}{r}{0.35\textwidth}
\vspace{-10pt}
\begin{center}
\begin{tikzpicture}
\filldraw[draw=black,fill=gray!10]
plot [domain=0:360,smooth,samples=100] (\x:{2+sin(3*\x)/4+cos(2*\x)/6+cos(3*\x+25)/7})
-- plot [domain=360:0,smooth,samples=100] (\x:{2+sin(3*\x)/4+cos(2*\x)/6+cos(3*\x+25)/7+70*max(0,(\x/100-2.8)*(3.4-\x/100))*abs(\x/100-2.8)*abs(3.4-\x/100))})
-- cycle;
\draw[domain=280:340,thick,smooth,samples=100] 
plot (\x:{2+sin(3*\x)/4+cos(2*\x)/6+cos(3*\x+25)/7});
\draw[fill=gray!30!white] (0,1) circle(0.4) node {$D_1$}; 
\draw[fill=gray!30!white] (1,-0.1) circle(0.35) node {$D_2$}; 
\draw[fill=gray!30!white] (-0.5,-0.3) circle(0.45) node {$D_3$}; 
\draw[fill=gray!40!white] (1.4,-1.5) circle(0.15) node {$\scalebox{0.5}{$D_0$}$}; 
\draw (0,-1.7) node {$D$}; 
\draw (2,-1.8) node {$D'$}; 
\draw (1,-1.1) node {$\Gamma$}; 
\end{tikzpicture}
\end{center}
\captionof{figure}{The domain~$D'$\label{pic2}}
\end{wrapfigure}
The key point is the proof of~\eqref{2.45}. To this end, we construct a one-connected domain~$\widetilde D'\supset\widetilde D$ with smooth boundary~$\p\widetilde D'$ such that 
$$
\p\widetilde D\setminus(\p\widetilde D'\cap \p\widetilde D)=\Gamma,
$$ 
and define (see Figure~\ref{pic2} and cf.~\eqref{3.2})
$$
D'=\widetilde D'\setminus\bigcup_{i=1}^m{\overline D}_i.
$$
We next use Corollary~\ref{c4.10} to extend the functions~$u$ to the domain $[a,1]\times D'$ and also extend~$w_a$ to~$D'$ by zero. Denote the extended functions by~$\tilde u$ and~$\widetilde w_a$, respectively, and remark that $\widetilde w_a$ belongs to the space~$V$ considered on~$D'$. Let us fix an open set $D_0\subset\R^2$ such that $\overline D_0\subset D'\setminus\overline D$ and consider the following  problem with distributed control:
\begin{gather}
\p_t{\widetilde w}+\langle \tilde u,\nabla\rangle {\widetilde w} +\langle {\widetilde w},\nabla\rangle \tilde u+\langle {\widetilde w},\nabla\rangle {\widetilde w}
-\nu\Delta {\widetilde w}+\nabla p=f, 
\quad \diver\widetilde w = 0,\label{2.46}\\
{\widetilde w}\bigr|_{\p D'}=0, \quad {\widetilde w}(a)=\widetilde w_a,\label{2.47}
\end{gather}
where $f(t,x)$ is a control function supported by~$[a,b]\times D_0$. We shall construct~$f$ such that the solution~$\widetilde w$ of~\eqref{2.46}, \eqref{2.47} satisfies inequality~\eqref{2.45} in which~$w$ is replaced by~$\widetilde w$. In this case, the restriction of~$\widetilde w$ to~$[a,b]\times D$ will be a solution of~\eqref{2.43}, \eqref{2.44} with $\xi=\widetilde w|_{\p D}$ and will satisfy~\eqref{2.45}. Let us mention that, in the proof below, we shall need to replace the function~$\widetilde w_a$ in~\eqref{2.47} by its regularisation (in order to have $\xi\in G(\Sigma_0)$), to establish a stronger version of~\eqref{2.45}, to follow the dependence of the control~$\xi$ on the data, and to ensure that it belongs to a finite-dimensional subspaces of~$G(\Sigma_0)$. 

\medskip
{\it Step~4: Construction of a control\/}. Given $\delta>0$, we set
$$
\DDD_\delta=\{(u_0,u_0')\in X\times X:\|u_0-u_0'\|_1\le\delta\}.
$$ 
We need to construct, for any $R>0$ and a sufficiently small $\delta>0$, a continuous mapping $\varPhi:\DDD_\delta\times B_{G(\Sigma_0)}(R)\to G(\Sigma_0)$, $(u_0,u_0',\eta)\mapsto \eta'$, that is continuously differentiable in~$\eta$, has an image contained in a finite-dimensional subspace~$\EE$, and satisfies~\eqref{2.41} and~\eqref{2.42}.  We begin with a simple reduction. 

Recall that the space~$G_s$ with $3/2\le s\le 2$ was defined before Proposition~\ref{p3.1}. We claim that it suffices to construct a Banach space $F\subset G$, compactly embedded into $G_s$ for some $s\in(3/2,2)$ and, for any given $\varkappa>0$, a continuous mapping 
$$
\varPhi':\DDD_\delta\times B_{G(\Sigma_0)}(R)\to G(\Sigma_0)\cap F
$$ 
such that $\varPhi(u_0,u_0',\eta)$ is continuously differentiable in~$\eta$, and 
\begin{align}
\|\varPhi'(u_0,u_0',\eta)\|_F+\|D_\eta\varPhi'(u_0,u_0',\eta)\|_{\LL(G)}
&\le C\|u_0-u_0\|_1,\label{2.48}\\
\|S_\tau(u_0,\eta)-S_\tau(u_0',\eta+\varPhi'(u_0,u_0',\eta))\|
&\le \varkappa\,\|u_0-u_0'\|_1,
\label{2.49}
\end{align}
where $\tau\in(0,1)$ is a fixed number such that $\Sigma_0\subset[0,\tau]\times\p\widetilde D$, and~$C>0$ may depend on~$\varkappa$. Indeed, if such a mapping is constructed, then denoting by~${\mathsf P}_N$ the orthogonal projection in~$G(\Sigma_0)$ onto the vector span of\,\footnote{Recall that~$\{\varphi_j\}$ is the orthonormal basis in~$G(\Sigma_0)$ entering~\eqref{3.31}.} $\{\varphi_1,\dots,\varphi_N\}$, we define $\varPhi={\mathsf P}_N\circ\varPhi'$. Let us prove that if~$\varkappa$ and~$N^{-1}$ are sufficiently small, then~$\varPhi$ satisfies all the required properties. 

The image of~$\varPhi$ is contained in the $N$-dimensional subspace~$\EE$  spanned by the first~$N$ vectors of the basis~$\{\varphi_j\}$. The continuity of~$\varPhi$ with respect to its arguments and its continuous differentiability in~$\eta$ are obvious, and~\eqref{2.41} is a consequence of~\eqref{2.48}. To prove~\eqref{2.42}, we first use the Lipschitz-continuity of~$S_\tau$ to write (see assertion~(a) of Proposition~\ref{p3.1})
\begin{align}
&\|S_\tau(u_0,\eta)-S_\tau(u_0',\eta+\varPhi(u_0,u_0',\eta))\| 
\le \|S_\tau(u_0,\eta)-S_\tau(u_0',\eta+\varPhi'(u_0,u_0',\eta))\|
\notag\\
&\qquad+ \|S_\tau(u_0',\eta+\varPhi'(u_0,u_0',\eta))-S_\tau(u_0',\eta+\varPhi(u_0,u_0',\eta))\|
\notag\\
&\quad\le \varkappa\|u_0-u_0'\|_1
+C_{10}\|(I-\mathsf P_N)\varPhi'(u_0,u_0',\eta)\|_{G_s}. 
\label{2.50}
\end{align}
Since the embedding $F\subset G_s$ is compact, there is a sequence~$\{\alpha_N\}$ going to zero such that 
$$
\|(I-\mathsf P_N)\zeta\|_{G_s}\le \alpha_N\|\zeta\|_F
\quad\mbox{for any $\zeta\in F$}. 
$$
Combining this with~\eqref{2.50} and~\eqref{2.48}, we see that 
$$
\|S_\tau(u_0,\eta)-S_\tau(u_0',\eta+\varPhi(u_0,u_0',\eta))\|
\le (\varkappa+C_{11} \alpha_N)\|u_0-u_0'\|_1,
$$
where $C_{11}=C_{10}C$. The functions~$\eta$ and $\eta+\varPhi(u_0,u_0',\eta)$ vanish for $t\ge \tau$, and the regularising property of the Navier--Stokes equations with no-slip boundary condition (e.g., see Theorem~6.2 in~\cite[Chapter~1]{BV1992}) implies that 
\begin{align*}
\|S(u_0,\eta)-S(u_0',\eta+\varPhi(u_0,u_0',\eta))\|_1
&\le C_\tau \|S_\tau (u_0,\eta)-S_\tau(u_0',\eta+\varPhi(u_0,u_0',\eta))\|\\
&\le C_\tau (\varkappa+C_{11}\alpha_N)\|u_0-u_0'\|_1. 
\end{align*}
Choosing~$\varkappa$ to be sufficiently small and~$N$ sufficiently large, we arrive at the required inequality~\eqref{2.42}.

\smallskip
We now apply the scheme described in Step~3 to construct a mapping~$\varPhi'$ with the above-mentioned properties. To this end, we fix numbers $0<a<b<c<\tau$ such that $[a,c]\times\Gamma\subset\Sigma_0$, and consider a pair of initial conditions $(u_0,u_0')\in\DDD_\delta$ and a boundary function $\eta\in B_{G(\Sigma_0)}(R)$. The required control $\xi=\varPhi'(u_0,u_0',\eta)$ is defined consecutively on the intervals $[0,a]$ and $[a,1]$. Let us set 
\begin{equation} \label{2.61}
  \xi(t)=0\quad\mbox{for $0\le t\le a$}. 
\end{equation}
By Proposition~\ref{p3.1}, the function $w(t)=S_t(u_0',\eta)-S_t(u_0,\eta)$ belongs to the space~$\XX_1$ on the interval $[0,a]$ and satisfies Eqs.~\eqref{2.43} and the boundary and initial conditions
\begin{equation} \label{2.62}
w\bigr|_{\p D}=0, \quad w(0)=w_0:=u_0'-u_0.   
\end{equation}
The Lipschitz continuity of the resolving operator for the Navier--Stokes-type system~\eqref{2.43} implies that
\begin{equation} \label{2.63}
  \|w(a)\|_1\le C_{12}\|w_0\|_1\le C_{12}\delta,
\end{equation}
where $C_{12}>0$ depends only on~$R$. 

To define~$\xi$ on~$[a,1]$, we use Corollary~\ref{c4.10} to extend the function $u=\SSS(u_0,\eta)$ to a larger domain~$\DD$ containing~$\overline D$. In view of part~(b) of Proposition~\ref{p3.1} and the continuity of the embedding $W^{1,q}(\DD)\subset C(\overline{\DD})$ for $q>2$, the restriction of the resulting function~$\tilde u$ to the time interval~$I:=[a,1]$ belongs to the space 
$$
\UU:=L^2(I,H_\sigma^2(\DD))\cap W^{1,2}(I,L_\sigma^2(\DD))\cap C(I\times\overline\DD). 
$$
We extend~$w_a=w(a)\in V$ to $D'\setminus D$ by zero and denote $\widetilde w_a=\Omega_\gamma w_a$, where~$\{\Omega_\gamma\}$ is the family of regularising operators constructed in Proposition~\ref{p3.12}, and $\gamma\in(0,1)$ is a parameter that will be chosen below. Thus, $\widetilde w_a\in H^2\cap  V$ is a function satisfying the inequality
\begin{equation} \label{wa-est}
	\|\widetilde w_a-w_a\|\le C_{12}\gamma\,\|w_0\|_1,\quad 
	\|\widetilde w_a\|_2\le C_{13}(\gamma)\|w_0\|_1,
\end{equation}
where we used~\eqref{2.63}, \eqref{omegav}, and the boundedness of~$\Omega_\gamma$ from~$V$ to~$H^2$. Let us consider problem~\eqref{2.46}, \eqref{2.47}. We shall need the two results below. The first one deals with the regularity and an a priori estimate for solutions of~\eqref{2.46}. Given a time interval $i'\subset\R$, let us define the space 
\begin{equation} \label{2.66}
\ZZ(I'):=\{v\in L^2(I',V\cap H_\sigma^3): \p_t v\in L^2(I',V), \p_t^2 v\in L^2(I',V^*)\},
\end{equation}
where the functional spaces in~$x$ are considered on the domain~$D'$. The proof of the following result is rather standard and will be given in Section~\ref{proof-p26}. 

\begin{proposition} \label{l2.6}
Let $I=[a,1]$, let $Q=I\times D'$, and let $\RR$ be a mapping that takes a triple $(\tilde u,\widetilde w_a,f)$ to the solution~$\widetilde w$ of problem~\eqref{2.46}, \eqref{2.47}. Then~$\RR$ acts from $B_\UU(\rho)\times (H^2\cap V)\times H^1(Q)$ to the space~$\ZZ(I)$ and is a $C^1$ function of its argument that is bounded on bounded subsets, together with its derivatives of the first order. Moreover, for any $K>0$ there is $C_K>0$ such that, for $\tilde u\in B_\UU(\rho)$, $\widetilde w_a\in B_{V\cap H^2}(K)$, $f\in B_{H^1(Q)}(K)$, and any interval $I_\theta=[\theta,1]$ with $a\le\theta<1$, we have 
	\begin{equation} \label{2.059}
		\|\RR(\tilde u,\widetilde w_a,f)\|_{\ZZ(I_\theta)}
		\le C_K\bigl(\|\widetilde w(\theta)\|_2+\|f\|_{H^1(I_\theta\times D')}\bigr).
	\end{equation}
\end{proposition}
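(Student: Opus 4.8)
The plan is to regard~\eqref{2.46},~\eqref{2.47} as a two‑dimensional Navier--Stokes-type Cauchy problem with an Oseen-type term carrying the coefficient~$\tilde u$ and an external force~$f$, and to proceed in three stages: (I)~well-posedness and $\ZZ(I)$-regularity together with an a priori bound, obtained by a Galerkin approximation and energy estimates; (II)~the $C^1$-dependence on the data, obtained from the implicit function theorem after eliminating the pressure; (III)~the weighted bound~\eqref{2.059}, obtained from the forward-parabolicity of the equation. Throughout I write $A=-\Pi\Delta$ for the Stokes operator on~$D'$ with the no-slip boundary condition, perform all estimates on Galerkin approximations built from its eigenfunctions, and pass to the limit; below $\kappa:=\|\widetilde w_a\|_2+\|f\|_{H^1(Q)}$.

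\medskip\noindent\emph{Stage I.}
I would establish the following chain of estimates, whose constants depend on the data only through~$\rho$ and~$\kappa$. Testing~\eqref{2.46} with~$\widetilde w$, using $(\langle\widetilde w,\nabla\rangle\widetilde w,\widetilde w)=0$, the bounds $\tilde u\in C(I\times\overline{D'})$ and $\nabla\tilde u\in L^2(I,L^2)$ coming from $\tilde u\in B_\UU(\rho)$, and Gronwall's inequality, controls~$\widetilde w$ in $C(I,H)\cap L^2(I,V)$. Testing with~$A\widetilde w$ and estimating the trilinear terms by the 2D Ladyzhenskaya and Agmon inequalities controls~$\widetilde w$ in $C(I,V)\cap L^2(I,H_\sigma^2)$. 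Testing with~$A^2\widetilde w$, and using in addition the elementary 2D bound $\|\langle v,\nabla\rangle v'\|_1\le C\|v\|_2\|v'\|_2$ and $\tilde u\in L^2(I,H^2)$, controls~$\widetilde w$ in $C(I,V\cap H_\sigma^2)\cap L^2(I,V\cap H_\sigma^3)$. Differentiating~\eqref{2.46} in~$t$ and testing with $z:=\p_t\widetilde w$, using $\p_t f\in L^2(Q)$, $\p_t\tilde u\in L^2(I,L^2)$, the bounds just obtained, and the fact that $z(a)$ — read off from~\eqref{2.46} at $t=a$ — is controlled in~$H$ (here one uses $\tilde u\in C(I,H^1)$, which follows from $\tilde u\in L^2(I,H^2)$ and $\p_t\tilde u\in L^2(I,L^2)$, and that the trace $f(a)$ lies in $L^2(D')$ with norm $\le C\|f\|_{H^1(Q)}$), controls~$z$ in $C(I,H)\cap L^2(I,V)$; since $\widetilde w$ vanishes on $\p D'$ for all~$t$, so does~$z$, whence $z\in L^2(I,V)$. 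Finally, reading $\p_t^2\widetilde w=-\nu A z-\p_t\Pi\bigl(\langle\tilde u+\widetilde w,\nabla\rangle\widetilde w+\langle\widetilde w,\nabla\rangle\tilde u\bigr)+\Pi\p_t f$ off the equation and using the product rule gives $\p_t^2\widetilde w\in L^2(I,V^*)$. Altogether $\widetilde w\in\ZZ(I)$, with $\|\widetilde w\|_{\ZZ(I)}$ bounded by a continuous non-decreasing function of~$\rho$ and~$\kappa$; uniqueness, and the Lipschitz continuity of~$\RR$ on bounded subsets, follow from a Gronwall estimate for the equation satisfied by the difference of two solutions.

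\medskip\noindent\emph{Stage II.}
Applying Leray's projection~$\Pi$ removes the pressure and turns~\eqref{2.46},~\eqref{2.47} into the equation $\Psi(\tilde u,\widetilde w_a,f;\widetilde w)=0$, where
\[
\Psi(\tilde u,\widetilde w_a,f;\widetilde w)=\Bigl(\p_t\widetilde w+\nu A\widetilde w+\Pi\bigl(\langle\tilde u+\widetilde w,\nabla\rangle\widetilde w+\langle\widetilde w,\nabla\rangle\tilde u\bigr)-\Pi f,\ \widetilde w(a)-\widetilde w_a\Bigr).
\]
Since $\ZZ(I)$ embeds continuously into $C(I,V\cap H_\sigma^2)$, since $\|\langle v,\nabla\rangle v'\|_1\le C\|v\|_2\|v'\|_2$ in dimension two, and since $\Pi$ is bounded on $H^1(D')$, the map $\Psi$ acts from $B_\UU(\rho)\times(V\cap H_\sigma^2)\times H^1(Q)\times\ZZ(I)$ into a Banach space~$\YY$ of pairs (force, initial value) — for instance $\YY=\{g\in L^2(I,H_\sigma^1(D')):\p_t g\in L^2(I,V^*)\}\times(V\cap H_\sigma^2)$, the natural target dictated by the linear theory of Stage~I — and, being a polynomial of degree~$2$ in its arguments, is of class~$C^\infty$. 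Its partial differential in~$\widetilde w$ at a point is the linear Oseen-type operator
\[
\phi\ \longmapsto\ \bigl(\p_t\phi+\nu A\phi+\Pi(\langle\tilde u+\widetilde w,\nabla\rangle\phi+\langle\phi,\nabla\rangle(\tilde u+\widetilde w)),\ \phi(a)\bigr),
\]
whose coefficients lie in $C(I\times\overline{D'})$ and $L^\infty(I,H^1)$; carrying out the energy estimates of Stage~I in this (linear, hence globally well-posed without smallness) setting shows that it is an isomorphism of~$\ZZ(I)$ onto~$\YY$, with inverse bounded in terms of~$\rho$ and $\|\widetilde w\|_{\ZZ(I)}$. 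By the implicit function theorem and the uniqueness of Stage~I, $\RR$ is of class~$C^1$ — in fact~$C^\infty$ — from $B_\UU(\rho)\times(V\cap H_\sigma^2)\times H^1(Q)$ to~$\ZZ(I)$; it is bounded on bounded subsets by the a priori estimate of Stage~I, and so is $D\RR=-(D_{\widetilde w}\Psi)^{-1}D_{(\tilde u,\widetilde w_a,f)}\Psi$, since $D_{(\tilde u,\widetilde w_a,f)}\Psi$ is affine in the data. (Alternatively, Stage~II may be done by hand, deriving $C^1$-differentiability from $\ZZ(I)$-bounds for the linearised equation and a standard remainder estimate, as in the classical theory; cf.~\cite{kuksin-1982,taylor1996}.)

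\medskip\noindent\emph{Stage III and the main obstacle.}
Since~\eqref{2.46} is a forward parabolic system, the restriction of $\widetilde w=\RR(\tilde u,\widetilde w_a,f)$ to $I_\theta=[\theta,1]$ solves the same problem on $I_\theta\times D'$ with initial datum $\widetilde w(\theta)\in V\cap H_\sigma^2$ and force~$f|_{I_\theta}$. For fixed $K>0$, Stage~I yields a bound $\|\widetilde w\|_{L^\infty(I,H^2)}\le M=M(\rho,K)$ whenever $\widetilde w_a\in B_{V\cap H^2}(K)$ and $f\in B_{H^1(Q)}(K)$; using it to treat the quadratic term $\langle\widetilde w,\nabla\rangle\widetilde w$ as a first-order perturbation with coefficient controlled by~$M$, the Stage~I estimates run on~$I_\theta$ become linear in $\|\widetilde w(\theta)\|_2$ and $\|f\|_{H^1(I_\theta\times D')}$, with a Gronwall constant uniform over $\theta\in[a,1)$ (its exponential factor being $\le e^{C(\rho,M)(1-a)}$). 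This is~\eqref{2.059}, with $C_K$ depending only on~$K$, $\rho$, $\nu$, $D'$, and~$D_0$. I expect the main obstacle to be the high-regularity estimate of Stage~I obtained by testing with~$A^2\widetilde w$: one must control the trilinear terms up to the $H^3$ level near the boundary under only a no-slip condition and keep all constants dependent on the data precisely through~$\rho$ and~$\kappa$; its linearised version is, in turn, what gives the isomorphism property of $D_{\widetilde w}\Psi$ in Stage~II. Everything else — the Galerkin construction, the Gronwall uniqueness and Lipschitz bounds, and the passage from~$I$ to~$I_\theta$ — is routine.
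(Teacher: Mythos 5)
Your overall architecture (basic energy estimates, an estimate for the time-differentiated equation, the implicit function theorem for smooth dependence, and the reduction of~\eqref{2.059} to the case $\theta=a$) matches the paper's, and Stages~II and~III are essentially sound. The genuine gap is exactly where you place it: the passage to the $H^3$ level in Stage~I by testing with $A^2\widetilde w$, where $A=-\Pi\Delta$ is the Stokes operator (the paper's~$L$). In a bounded domain with the no-slip condition this test does not close. On the Galerkin level $A^2\widetilde w$ is an admissible test function, but to exploit the bound $\|\langle v,\nabla\rangle v'\|_1\le C\|v\|_2\|v'\|_2$ you must move one power of~$A$ off the test function, i.e.\ write $(g,A^2\widetilde w)=(A^{1/2}g,A^{3/2}\widetilde w)$ or pair $g$ against $A^2\widetilde w$ in the $V$--$V^*$ duality; both require $g\in D(A^{1/2})=V$, whereas $g=\Pi\bigl(\langle\tilde u+\widetilde w,\nabla\rangle\widetilde w+\langle\widetilde w,\nabla\rangle\tilde u\bigr)-\Pi f$ lies only in $H^1\cap H$ and does not vanish on~$\p D'$. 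Without that integration by parts the term is bounded only by $\|g\|\,\|A^2\widetilde w\|$, which involves the uncontrolled $H^4$ norm. This is the classical obstruction to higher-order energy estimates for the Navier--Stokes system up to the boundary; it is not a boundary technicality to be absorbed, the scheme itself must change.

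The paper avoids the $A^2$ test altogether. After the $L^2$ and $H^1$ estimates it first proves an $L^\infty(I_a,W^{1,q})$ bound ($q>2$) by rewriting the equation as a non-autonomous Stokes problem and invoking the analytic-semigroup estimates in~$L^q$ (the Giga--Miyakawa machinery already used for Proposition~\ref{p3.1}(b)); this supplies the $L^\infty$-in-$x$ control of~$\widetilde w$ that is indispensable in the time-differentiated estimate, where the terms containing $\p_t\tilde u\in L^2(I,L^2)$ must be paired with $\|\widetilde w\|_{L^\infty}$. In your ordering that control was to come from $C(I,H^2)$, i.e.\ from the very $A^2$ estimate that fails, so your time-derivative step inherits the gap. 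Only after $\p_t\widetilde w$ is bounded in $C(I_a,H)\cap L^2(I_a,V)$ does the paper recover $\|\widetilde w\|_{L^2(I_a,H^3)}$, by solving~\eqref{NS-pr} for $L\widetilde w$ and applying the stationary elliptic regularity of the Stokes operator. Replace your Stage~I high-regularity step by this $W^{1,q}$ plus elliptic-regularity route (or an equivalent maximal-regularity argument); its linearised version then also furnishes the isomorphism property of $D_{\widetilde w}\Psi$ needed in your Stage~II, and the rest of your proposal goes through.
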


The second result concerns a control problem for~\eqref{2.46}, \eqref{2.47} and is a consequence of Theorem~2 in~\cite{FGIP-2004} and Theorem~3.1 in~\cite{shirikyan-asens2015}  (see Remark~\ref{r2.6} below). 

\begin{proposition} \label{p2.5}
For any $\rho>0$ and $\e\in(0,1)$, there are positive numbers~$d$ and~$C$, and a continuous mapping\,\footnote{The mapping~$\CC_\e$ depends also on~$R$. However, we omit that dependence from the notation, because the parameter~$R$ will be fixed when applying Proposition~\ref{p2.5}.}
$$
\CC_\e:B_\UU(\rho)\to \LL(H,H_0^1(Q_0,\R^2)), \quad Q_0:=[a,b]\times D_0,  
$$ 
such that the following properties hold.
\begin{description}
\item[\underline{\rm Contraction:}]
For any $\tilde u\in B_\UU(\rho)$ and $\widetilde w_a\in B_H(d)$, the solution~$\widetilde w\in\ZZ(I)$ of problem~\eqref{2.46}, \eqref{2.47} with $f=\CC_\e(\tilde u)\widetilde w_a$ satisfies the inequality\,\footnote{The result established in~\cite{shirikyan-asens2015} claims only an estimate for the $L^2$-norm of the solution at time $t=b$: $\|\widetilde w(b)\|\le \e\|\widetilde w_a\|$. However, the regularising property of the Navier--Stokes flow implies that the $L^2$-norm on the left-hand side can be replaced with the $H^2$-norm for  $b\le t\le 1$; cf.\ proof of Proposition~\ref{l2.6}.}
\begin{equation} \label{3.02}
\|\widetilde w(t)\|_2\le \e\,\|\widetilde w_a\|
\quad\mbox{for $b\le t\le 1$}. 
\end{equation}
\item[\underline{\rm Regularity:}]
The mapping~$\CC_\e$ is infinitely smooth in the Fr\'echet sense. 
\item[\underline{\rm Lipschitz continuity:}]
The mapping $\CC_\e$ satisfies the inequality 
\begin{equation} \label{3.03}
\bigl\|\CC_\e(\tilde u_1)-\CC_\e(\tilde u_2)\bigr\|_{\LL}
\le C\,\|\tilde u_1-\tilde u_2\|_\UU,
\end{equation}
where $\|\cdot\|_\LL$ stands for the norm in the space $\LL(H,H_0^1(Q_0,\R^2))$. 
\end{description}
\end{proposition}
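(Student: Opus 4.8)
The plan is to prove Proposition~\ref{p2.5} by combining the global approximate controllability result of~\cite{FGIP-2004} for the Navier--Stokes equations with an interior control (which provides a single trajectory steering $\widetilde w_a$ close to zero), with the linearisation and stabilisation argument of~\cite[Theorem~3.1]{shirikyan-asens2015}, which upgrades one control into a Lipschitz-continuous feedback family; the regularising property of the Navier--Stokes flow then promotes the $L^2$-smallness at time $t=b$ to the $H^2$-smallness for $b\le t\le 1$ claimed in~\eqref{3.02}. Concretely, I would proceed as follows.

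\textbf{Step 1 (Construction of the control for a fixed drift).}
Fix $\tilde u\in B_\UU(\rho)$. Equation~\eqref{2.46} with initial data $\widetilde w_a$ is a Navier--Stokes-type system with a smooth given drift~$\tilde u$ and an interior control~$f$ supported in $Q_0=[a,b]\times D_0$, where $\overline D_0\subset D'\setminus\overline D$. Since $D_0$ is a non-empty open subset of the connected domain~$D'$, Theorem~2 in~\cite{FGIP-2004} yields the approximate controllability to zero: for any target accuracy there is a control $f\in H_0^1(Q_0,\R^2)$ steering $\widetilde w_a$ to a state of arbitrarily small $L^2$-norm at time $t=b$. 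This gives, for each $\tilde u$ and each ball $B_H(d)$ of initial data, a control realising $\|\widetilde w(b)\|\le\tfrac{\e}{2}\|\widetilde w_a\|$, say; the size of~$d$ and the bound~$C$ on~$f$ will come out of this step together with the a~priori estimates of Proposition~\ref{l2.6}. One works with the linearised problem near the trajectory and uses that, for~$\widetilde w_a$ small, the nonlinear term is a small perturbation so the control can be chosen linearly in~$\widetilde w_a$; this is exactly the mechanism of~\cite[Theorem~3.1]{shirikyan-asens2015} and produces the operator $\CC_\e(\tilde u)\in\LL(H,H_0^1(Q_0,\R^2))$.

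\textbf{Step 2 (Regularity in $\tilde u$).}
The mapping $\tilde u\mapsto\CC_\e(\tilde u)$ inherits its smoothness from the implicit-function-theorem construction: the control is obtained by inverting a (right-invertible) linear operator built from the resolving operator of the linearised equation, which depends smoothly on the drift~$\tilde u$ by Proposition~\ref{p3.1} and Proposition~\ref{l2.6}. A bounded-inverse / open-mapping argument, combined with the $C^\infty$-dependence of the linearised flow on~$\tilde u$, gives that $\CC_\e$ is $C^\infty$ in the Fréchet sense, and in particular locally Lipschitz; on the bounded set $B_\UU(\rho)$ this yields the global Lipschitz bound~\eqref{3.03} with a constant~$C=C(\rho,\e)$. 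This is the same pattern as in~\cite{shirikyan-asens2015}, and Remark~\ref{r2.6} below records why the hypotheses of those theorems are met in the present geometry (extended domain~$D'$, interior control region~$D_0$).

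\textbf{Step 3 ($L^2$-to-$H^2$ upgrade).}
It remains to pass from $\|\widetilde w(b)\|\le\e\|\widetilde w_a\|$ (the output of Steps~1--2) to the pointwise bound~\eqref{3.02} for $b\le t\le1$. On $[b,1]$ the control is switched off, so $\widetilde w$ solves a homogeneous Navier--Stokes-type system with drift~$\tilde u$; applying Proposition~\ref{l2.6} on the interval $I_\theta$ with $\theta$ slightly larger than~$b$, together with the regularising property of the Navier--Stokes flow with the no-slip condition (e.g.\ Theorem~6.2 in~\cite[Chapter~1]{BV1992}), bounds $\|\widetilde w(t)\|_2$ for $t\ge b$ by a constant (depending on~$\rho$) times $\|\widetilde w(b)\|$, hence by $C\e\|\widetilde w_a\|$; absorbing the constant into~$\e$ (i.e.\ running Steps~1--2 with $\e$ replaced by $\e/C$) gives~\eqref{3.02} as stated.

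The main obstacle is \textbf{Step 2}: establishing that the control can be chosen \emph{Lipschitz-continuously, indeed smoothly, as a function of the drift}~$\tilde u$ uniformly over the ball $B_\UU(\rho)$. Approximate controllability by itself is a pointwise-in-$\tilde u$ statement and gives no regularity; the uniformity and smoothness require the quantitative construction of~\cite{shirikyan-asens2015} (a right inverse of the reachability operator of the linearised equation, stable under perturbations of~$\tilde u$), and one has to check that the observability/Carleman estimates underlying~\cite{FGIP-2004}, and hence the norm of that right inverse, are uniform for $\tilde u\in B_\UU(\rho)$. Once this uniform quantitative controllability is in hand, Steps~1 and~3 are routine energy estimates of the type already carried out in the proof of Proposition~\ref{p3.1} and (to be given) Proposition~\ref{l2.6}.
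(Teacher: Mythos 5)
Your proposal follows essentially the same route as the paper, which states Proposition~\ref{p2.5} as a consequence of Theorem~2 in~\cite{FGIP-2004} and Theorem~3.1 in~\cite{shirikyan-asens2015} (with Remark~\ref{r2.6} supplying exactly the point you flag, namely that the underlying observability inequality, and hence the quantitative construction of the feedback operator, remains uniformly valid for drifts $\tilde u\in B_\UU(\rho)$ that need not solve the Navier--Stokes system), followed by the same $L^2$-to-$H^2$ upgrade on $[b,1]$ via the regularising property. Your identification of the main obstacle --- smooth, uniformly Lipschitz dependence of the control on the drift rather than mere pointwise controllability --- matches where the paper places the burden of proof.
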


Let us fix a number~$\rho>0$ so large that $\|\tilde u\|_\UU\le \rho$ for any $u_0\in X$ and $\eta\in B_{G(\Sigma_0)}(R)$. Given~$\e>0$, we denote by~$d_\e>0$ the constant constructed in Proposition~\ref{p2.5} and choose $\delta>0$ so small that $(C_{12}+1)\delta\le d_\e$, so that (see~\eqref{2.63} and~\eqref{wa-est})
\begin{equation} \label{est-wa}
\|\widetilde w_a\|\le d_\e, \quad 
\|\widetilde w_a\|_2\le K:=C_{13}(\gamma)\delta. 
\end{equation}
Applying Propositions~\ref{p2.5} and~\ref{l2.6}, we construct a solution ${\widetilde w}^\e \in \ZZ(I)$ of problem~\eqref{2.46}, \eqref{2.47} with $\widetilde w_a\in B_H(d)\cap B_{H^2\cap V}(K)$ and $f=\CC_\e(\tilde u)\widetilde w_a$ such that inequality~\eqref{3.02} holds for $\widetilde w=\widetilde w^\e$.  

Let us denote by $\tilde \xi^\e$ the restriction of~$\widetilde w^\e$ to $I\times D'$, choose an arbitrary function $\chi\in C^\infty(\R)$ such that 
$$
0\le \chi\le 1,\quad 
\chi(t)=\left\{
\begin{array}{cl}
	1&\mbox{for $t\le b$},\\
	0&\mbox{for $t\ge c$},
\end{array}
\right.
$$
and extend (see~\eqref{2.61}) the function~$\xi$ to~$[a,1]$ by the relation $\xi(t)=\chi(t)\tilde\xi^\e(t)$. We claim that the mapping~$\varPhi'$ taking $(u_0,u_0',\eta)$ to~$\xi$ satisfies all required properties for an appropriate choice of the parameters~$\e$ and~$\gamma$. Indeed, let us denote by~$F$ the class of boundary functions $\zeta\in G$ such that $\zeta(t)=0$ for $0\le t\le a$ and~$\zeta|_{I}$ belongs to the space of restrictions to $I\times \p D'$ of the elements of~$\ZZ(I)$. Note that~$F$ has a natural structure of the quotient  (Banach) space and is compactly embedded into~$G_s$ for any $s\in (\frac32,2)$. The construction implies that $\xi\in F$. Furthermore, we have $\xi(t,x)=0$ for $t\notin[a,c]$ or $x\notin\Gamma$, and since $[a,c]\times \Gamma\subset \Sigma_0$, we conclude that $\xi\in G(\Sigma_0)$. To prove the regularity of the mapping~$\varPhi'$ with respect to~$\eta$, we note that its restriction to~$[a,1]$ can be written as 
\begin{gather} 
	\varPhi'(u_0,u_0',\eta)=\bigl(\chi(t)\RR(\tilde u,\widetilde w_a,\CC_\e(\tilde u)\widetilde w_a)
	\bigr)\big|_{\p D}\,,\label{phi}\\
	\tilde u=\LL\bigl(\SSS(u_0,\eta)\bigr),\quad 
	\widetilde w_a=\Omega_\gamma\bigl(S_a(u_0,\eta)-S_a(u_0',\eta)\bigr),\notag
\end{gather}
where~$\LL$ is the extension operator in Proposition~\ref{p4.9}, $\{\Omega_\gamma\}$ is the family of regularising operators in Proposition~\ref{p3.12}, and the function $S_a(u_0,\eta)-S_a(u_0',\eta)$ is extended to~$D'$ by zero. Since all the mappings that enter~\eqref{phi} are  $C^1$-smooth, so is~$ \varPhi'$. Thus, it remains to establish inequalities~\eqref{2.48} and~\eqref{2.49}. 

\medskip
{\it Step~5: Proof of~\eqref{2.48} and~\eqref{2.49}\/}. To estimate the norm of~$\xi=\varPhi'(u_0,u_0',\eta)$ in~$F$, we note that 
\begin{equation} \label{xiz}
	\|\xi\|_F\le C_{14}\|\widetilde w^\e\|_{\ZZ(I)}. 
\end{equation}
In view of~\eqref{2.059}, \eqref{3.03}, and the second inequality in~\eqref{est-wa}, we have 
\begin{equation} \label{we-est}
\|\widetilde w^\e\|_{\ZZ(I)}\le C_{15}(\gamma,\e)\|w_0\|_1. 
\end{equation}
Combining this with~\eqref{xiz}, we see that $\|\varPhi'(u_0,u_0',\eta)\|_F$ can be estimated by the right-hand side of~\eqref{2.48}. Differentiating~\eqref{phi} with respect to~$\eta$ and using the boundedness of the derivatives of~$\RR$ , $\SSS$, and~$\CC_\e$ on bounded subset, we can apply similar arguments to prove that $\|D_\eta \varPhi'(u_0,u_0',\eta)\|_{\LL(G)}$ also does not exceed right-hand side of~\eqref{2.48}. 

To establish~\eqref{2.49}, let us denote
$$
u(t)=S_t(u_0,\eta), \quad u'(t)=S_t(u_0,\eta+\varPhi'(u_0,u_0',\eta)), \quad u^\e(t)=u(t)+\widetilde w^\e(t)\big|_{D}, 
$$
where $\widetilde w^\e=\RR(\tilde u,\widetilde w_a,\CC_\e(\tilde u)\widetilde w_a)$ and $a\le t\le 1$ in the last relation. Then, in view of inequality~\eqref{3.02} and the Lipschitz-continuity of the resolving operator for the Navier--Stokes system considered on~$[a,\tau]$ (see part~(a) of Proposition~\ref{p3.1}), we can write 
\begin{align}
	\|u(\tau)-u'(\tau)\|
	&\le \|\widetilde w^\e(\tau)\|+\|u^\e(\tau)-u'(\tau)\|\notag\\
	&\le \e \|\widetilde w_a\|
	+C_{16}\bigl(\|w_a-\widetilde w_a\|+\|\xi-\tilde\xi^\e\|_{G_s}\bigr),
	\label{utau}
\end{align}
where $C_{16}>0$ does not depend on~$\e$ and~$\gamma$. Since $\chi(t)=1$ for $t\le b$ and $\CC_\e(\tilde u)\widetilde w_a$ is supported by~$[a,b]\times D_0$, using~\eqref{2.059} on the interval $I_b=[b,1]$ and inequality~\eqref{3.02}, we see that 
$$
\|\xi-\tilde\xi^\e\|_{G_s}\le C_{17}\|\widetilde w^\e\|_{\ZZ(I_b)}
\le C_{18}\|\widetilde w^\e(b)\|_2\le C_{18}\e\|\widetilde w_a\|.
$$
Combining this with~\eqref{utau}, \eqref{2.63}, and the first inequality in~\eqref{wa-est}, we derive
\begin{align*}
\|u(\tau)-u'(\tau)\|
&\le \e(1+C_{16}C_{18})\|\widetilde w_a\|+C_{16}\|w_a-\widetilde w _a\|\\
&\le2C_{12}(1+C_{16}C_{18})\e\|w_0\|_1+C_{16}C_{12}\gamma\|w_0\|_1. 
\end{align*}
Choosing $\gamma=(2C_{16}C_{12})^{-1}\varkappa$ and~$\e=(4C_{12}(1+C_{16}C_{18}))^{-1}\varkappa$, and taking~$\delta>0$ so small that~\eqref{est-wa} holds, we arrive at~\eqref{2.49}. This completes the proof of Theorem~\ref{t3.1}.

\begin{remark} \label{r2.6}
	Theorem~3.1 in~\cite{shirikyan-asens2015} was established under the hypothesis that the function~$\tilde u$ is a solution of the Navier--Stokes system. Namely, it was required that $\tilde u$ should belong to the space~$\ZZ(I)$ and, in particular, should vanish on the boundary~$\p D'$. However, the key ingredient of the proof---the observability inequality---remains valid if we only assume that~$\tilde u\in\UU$. This can be seen by analysing the proof of Lemma~1 in~\cite{FGIP-2004}, which is the main step in the proof of the local exact controllability (see Theorem~2 in~\cite{FGIP-2004}). 
\end{remark}

\subsection{Proof of Proposition~\ref{l2.6}}
\label{proof-p26}

We confine ourselves to the proof of inequality~\eqref{2.059} in the case $\theta=a$. The remaining assertions are standard facts of the general theory of nonlinear PDEs (cf.~\cite{kuksin-1982} and~\cite[Chapter~1]{VF1988}). 

Projecting Eq.~\eqref{2.46} to the space~$H$ over~$D'$, we reduce it to the evolution equation
\begin{equation} \label{NS-pr}
	\dot w+\nu Lw+B(w)+B(u,w)+B(w,u)=\Pi f,
\end{equation}
where $L=-\Pi\Delta$, $B(u,w)=\Pi(\langle u,\nabla\rangle w)$, $B(w)=B(w,w)$, and we write $w$ and~$u$ instead~$\widetilde w$ and~$\tilde u$ to simplify notation. The proof of~\eqref{2.059} is divided into several (standard) steps; cf.\ proof of Theorem~6.2 in~\cite{BV1992}. 

\medskip
{\it Step~1. Estimate in $L^2(V)\cap C(H)$}. 
Taking the inner product in~$L^2$ of~\eqref{NS-pr} with~$2w$ and using the relation $(B(v,w),w)=0$, we derive
\begin{equation} \label{L2-1}
	\p_t\|w\|^2+2\nu\|w\|_1^2=2(f,w)+2(B(w,u),w).
\end{equation}
It follows from H\"older's inequality and a well-known estimate for the quadratic term~$B$ that
\begin{equation} \label{L2-2}
	2|(f,w)+(B(w,u),w)|\le\nu\|w\|_1^2+C_1\bigl(\|w\|^2+\|f\|^2),
\end{equation}
where we denote by~$C_i$ positive numbers depending only on~$\nu$, $\rho$, and~$K$. Combining~\eqref{L2-1} and~\eqref{L2-2}, we derive 
$$
\p_t\|w\|^2+\nu\|w\|_1^2\le C_1\bigl(\|w\|^2+\|f\|^2).
$$
Application of Gronwall's inequality results in 
\begin{equation} \label{est-L2}
	\|w\|_{C(I_a,H)}+\|w\|_{L^2(I_a,V)}\le C_2\bigl(\|w(a)\|+\|f\|_{L^2(Q)}\bigr). 
\end{equation}

\smallskip
{\it Step~2. Estimate in $L^2(H^2)\cap W^{1,2}(H)$}. 
Let us take the inner product in~$L^2$ of Eq.~\eqref{NS-pr} with~$2Lw$. Using the inequalities
\begin{align*}
	\bigl|(B(w),Lw)|&\le C_3\|w\|_\infty \|w\|_1\|w\|_2
	\le C_4\|w\|^{1/2}\|w\|_1\|w\|_2^{3/2}\\
	&\le \tfrac{\nu}{8}\|w\|_2^2+C_5\|w\|^2\|w\|_1^4,\\
	\bigl|(B(u,w),Lw)|&\le C_3\|u\|_\infty \|w\|_1\|w\|_2
	\le C_4\|u\|_2\|w\|_1\|w\|_2 \\
		&\le \tfrac{\nu}{8}\|w\|_2^2+C_5\|u\|_2^2\|w\|_1^2,\\
	\bigl|(B(w,u),Lw)|&\le C_3\|w\|_\infty \|u\|_1\|w\|_2
	\le C_4\|w\|^{1/2}\|u\|_1\|w\|_2^{3/2} \\
		&\le \tfrac{\nu}{8}\|w\|_2^2+C_5\|u\|_1^4\|w\|^2
\end{align*}
and carrying out some simple transformations, we derive
$$
\p_t\|w\|_1^2+\nu\|w\|_2^2\le C_6\bigl(\|w\|^2\|w\|_1^2+\|u\|_2^2\bigr)\|w\|_1^2+C_6\|u\|_1^4\|w\|^2+C_6\|f\|^2. 
$$
Applying Gronwall's inequality and using~\eqref{est-L2}, we derive
\begin{equation} \label{est-H1}
	\|w\|_{C(I_a,V)}+\|w\|_{L^2(I_a,H^2)}
	\le C_7\bigl(\|w(a)\|_1+\|f\|_{L^2(Q)}\bigr). 
\end{equation}
Furthermore, it follows from~\eqref{NS-pr} that 
\begin{align*}
\|\dot w\|^2&\le C_8\bigl(\|w\|_2^2+\|f\|^2+\|B(w)+B(u,w)+B(w,u)\|^2\bigr)\\
&\le C_8\bigl(\|w\|_2^2+\|f\|^2\bigr)+C_9\bigl(\|w\|_2^2\|w\|_1^2+\|u\|_2^2\|w\|_1^2+\|w\|_2^2\|u\|_1^2\bigr).
\end{align*}
Combining this with~\eqref{est-H1}, we see that 
\begin{equation} \label{est-dtL2}
	\|\dot w\|_{L^2(Q)}
	\le C_{10}\bigl(\|w(a)\|_1+\|f\|_{L^2(Q)}\bigr). 
\end{equation}

\smallskip
{\it Step~3. Estimate in $L^\infty(W^{1,q})$}. 
Let us rewrite Eq.~\eqref{NS-pr} in the form
$$
\dot w+\nu Lw=h(t,x),
$$
where we set (cf.~\eqref{2.20})
$$
h=h_1+h_2, \quad h_1=\Pi f, \quad 
h_2=-\bigl(B(w)+B(u,w)+B(w,u)\bigr).
$$
If we prove that, for any $q\in(2,\infty)$,
\begin{equation} \label{h12}
\|h\|_{L^2(I_a,L^q)}\le C_{11}\bigl(\|f\|_{H^1(Q)}+\|w\|_{C(I_a,V)}+\|w\|_{L^2(I_a,H^2)}\bigr),	
\end{equation}
then the argument in the proof of Proposition~\ref{p3.1}~(b) combined with~\eqref{est-H1} will show that
\begin{align} 
	\|w\|_{L^\infty(I_a,W^{1,q})}
	&\le C_{12}\bigl(\|w(a)\|_{W^{1,q}}+\|f\|_{H^1(Q)}\bigr)\notag\\
	&\le C_{13}\bigl(\|w(a)\|_2+\|f\|_{H^1(Q)}\bigr). \label{LW1q}
\end{align}
The continuity of Leray's projection in the $L^q$ norm implies that  $\|h_1\|_{L^q}\le \|f\|_1$, so that we establish only a bound for the norm of~$h_2$. It follows from H\"older's inequality and the continuous embedding $H^1\subset L^{2q}$ that
\begin{align*}
\|B(v_1,v_2)\|_{L^q}
&\le  \|v_1\|_{L^{2q}}\|\nabla\otimes v_2\|_{L^{2q}}
\le C_{14}\|v_1\|_1\|v_2\|_2, 
\end{align*}
whence we see that
$$
\|B(v_1,v_2)\|_{L^2(I_a,L^q)}\le C_{14}\|v_1\|_{C(I_a,H^1)}\,\|v_2\|_{L^2(I_a,H^2)}. 
$$
This implies the required bound~\eqref{h12} for~$h_2$. 

\smallskip
{\it Step~4. Estimate in $W^{2,2}(V^*)$}. 
Differentiating~\eqref{NS-pr} in time, we derive
\begin{equation} \label{NS-diff}
	\dot z+\nu Lz+Q(u,\dot u,w,z)=g(t),
\end{equation}
where $z=\p_tw$, $g=\p_t(\Pi f)$, and 
$$
Q(u,\dot u,w,z)=B(z,w)+B(w,z)+B(z,u)+B(u,z)+B(\dot u,w)+B(w,\dot u).
$$
Let us take the inner product in~$L^2$ of Eq.~\eqref{NS-diff} with the function~$2z$. Since $(B(v,z),z)=0$, we derive 
\begin{multline} \label{NS-zz}
	\p_t\|z\|^2+2\nu \|z\|_1^2=2(g,z)-2(B(z,w),z)\\
	+2(B(z),u)+2(B(\dot u,z),w)+2(B(w,z),\dot u).
\end{multline}
Now note that 
\begin{align*}
	|(g,z)|&\le \|g\|\,\|z\|\le \|f\|_1^2+\|z\|^2,\\
	|(B(z,w),z)|&\le C_{15}\|z\|\,\|z\|_1\|w\|_1
	\le\tfrac{\nu}{8}\|z\|_1^2+C_{16}\|w\|_1^2\|z\|^2,\\
	|(B(z),u)|&\le C_{15}\|z\|\,\|z\|_1\|u\|_{L^\infty}
	\le\tfrac{\nu}{8}\|z\|_1^2+C_{16}\|z\|^2,\\
	|(B(\dot u,z),w)|&\le C_{15}\|\dot u\|\,\|z\|_1\|w\|_{L^\infty}
	\le\tfrac{\nu}{8}\|z\|_1^2+C_{16}\|\dot u\|^2\|w\|_{W^{1,q}}^2,\\
	|(B(w,z),\dot u)|&\le C_{15}\|w\|_{L^\infty}\|z\|_1\|\dot u\|
	\le\tfrac{\nu}{8}\|z\|_1^2+C_{16}\|\dot u\|^2\|w\|_{W^{1,q}}^2,
\end{align*}
where the last two estimates use the continuous embedding $W^{1,q}(D')\subset L^\infty(D')$ valid for $q>2$. Substituting these inequalities into~\eqref{NS-zz} and recalling~\eqref{est-H1} to estimate~$\|w\|_1$, we derive 
$$
\p_t\|z\|^2+\nu \|z\|_1^2\le C_{17}\bigl(\|z\|^2+\|f\|_1^2+\|\dot u\|^2\|w\|_{W^{1,q}}^2\bigr).
$$
Relation~\eqref{NS-pr} implies that $\|z(a)\|\le C_{18}\|w(a)\|_2$. 
Applying Gronwall's inequality and using~\eqref{LW1q}, we obtain 
\begin{equation} \label{est-dtw1}
	\|\p_t w\|_{C(I_a,L^2)}+\|\p_t w\|_{L^2(I_a,H^1)}
	\le C_{19}\bigl(\|w(a)\|_2+\|f\|_{H^1(Q)}\bigr). 
\end{equation}
Finally, resolving~\eqref{NS-diff} with respect to~$\dot z$ and taking the norm in~$V^*$, we easily conclude that $\|\p_t w\|_{L^2(I_a,V^*)}$ can be estimated by the right-hand side of~\eqref{est-dtw1}; cf.\ the derivation of~\eqref{est-dtL2}.  Thus, to complete the proof of~\eqref{2.059}, it remains to estimate the norm of~$w$ in~$L^2(I_a,H^3)$. 

\smallskip
{\it Step~5. Estimate in $L^2(H^3)$}. Resolving~\eqref{NS-pr} with respect to~$Lw$ and using the elliptic regularity for the Stokes operator~$L$, we see that 
\begin{equation} \label{wl3}
\|w(t)\|_3\le C_{20}\bigl(\|f(t)\|_1+\|B(w)+B(u,w)+B(w,u)\|_1\bigr).	
\end{equation}
To estimate the second term on the right-hand side, we note that
\begin{align*}
\|B(v_1,v_2)\|_1
&\le C_{21}\bigl(\|v_1\|_{W^{1,4}}\|v_2\|_{W^{1,4}}
+\|v_1\|_{L^\infty}\|v_2\|_2\bigr)\\
&\le C_{22}(\|v_1\|\,\|v_2\|_2+\|v_2\|\,\|v_1\|_2) 
+  C_{21} \|v_1\|_{L^\infty}\|v_2\|_2, 
\end{align*}
whence it follows that 
\begin{align*}
\|B(v_1,v_2)\|_{L^2(I_a,H^1)}
&\le C_{23}\sum_{i=1}^2\|v_i\|_{C(I_a,H^1)}\|v_{2-i}\|_{L^2(I_a,H^2)}\\
&+C_{23}\|v_1\|_{L^\infty(Q)}\|v_2\|_{L^2(I_a,H^2)}. 
\end{align*}
Substitution of this inequality into the right-hand side of~\eqref{wl3} results in
$$
\|w\|_{L^2(I_a,H^3)}\le C_{24}\bigl(\|w\|_{C(I_a,H^1)}+\|w\|_{L^2(I_a,H^2)}+\|w\|_{L^\infty(Q)}\bigr).
$$
Recalling~\eqref{est-H1} and~\eqref{LW1q}, we see that~$\|w\|_{L^2(I_a,H^3)}$ can be estimated by the right-hand side of~\eqref{est-dtw1}. This completes the proof of~\eqref{2.059}. 

\section{Appendix}
\label{s4}
\subsection{Sufficient conditions for mixing}
\label{s4.1}
Consider a discrete-time Markov process $(u_k,\IP_u)$ in a compact metric space~$X$. Let~$P_k(u,\Gamma)$ be the transition function for $(u_k,\IP_u)$ and let~$\PPPP_k$ and~$\PPPP_k^*$ be the corresponding Markov semigroups.  In this section, we recall a result on the uniqueness of a stationary measure for~$(u_k,\IP_u)$ and its exponential stability  in the dual-Lipschitz metric. 

\smallskip
Let us define the product space $\XXX=X\times X$ and denote by~$\Pi,\Pi':\XXX\to X$ the natural projections to its components, taking a point $\uuu=(u,u')\in\XXX$ to~$u$ and~$u'$, respectively. A Markov process $(\uuu_k,\bP_{\!\uuu})$ with the phase space~$\XXX$ is called an {\it extension\/} for $(u_k,\IP_u)$ if, for all $k\ge0$ and  $ \uuu=(u,u')\in\XXX$, we have
\begin{equation} \label{4.4}
\Pi_*\PPP_k(\uuu,\cdot)=P_k(u,\cdot), \quad \Pi_*'\PPP_k(\uuu,\cdot)=P_k(u',\cdot),
\end{equation}
where $\PPP_k(\uuu,\GGamma)$ stands for the transition function of $(\uuu_k,\bP_{\!\uuu})$. We have the following theorem established in~\cite{shirikyan-bf2008} (see also Section~3.1.3 in~\cite{KS-book}). 

\begin{theorem} \label{t4.2}
Let $X$ be a compact metric space and let $(u_k,\IP_u)$ be a family of discrete-time Markov processes in~$X$ that possesses an extension~$(\uuu_k,\bP_\uuu)$ satisfying the following properties for some closed subset $\BBB\subset\XXX$. 

\smallskip
\noindent
{\bf Recurrence:} 
The Markov time $\tau(\BBB)=\min\{k\ge0:\uuu_k\in\BBB\}$ 
is $\bP_\uuu$-almost surely finite for any $\uuu\in\XXX$, and there are positive numbers~$\beta$ and~$C_1$ such that
\begin{equation} \label{4.10}
\bE_\uuu e^{\beta\tau(\BBB)}\le C_1\quad\mbox{for any $\uuu\in\XXX$}. 
\end{equation}

\noindent
{\bf Squeezing:} 
There are positive numbers $q<1$, $d$, $\delta_1$, $\delta_2$, and~$C_2$ such that the Markov time $\sigma=\min\{k\ge0:d(u_k,u_k')>q^kd\}$ satisfies the inequalities
\begin{equation} \label{4.11}
\bP_\uuu\{\sigma=+\infty\}\ge\delta_1,\quad 
\bE_\uuu \bigl(e^{\delta_2\sigma}I_{\{\sigma<\infty\}}\bigr)\le C_2\quad\mbox{for $\uuu\in\BBB$}.
\end{equation}
Then $(u_k,\IP_u)$ has a unique stationary measure $\mu\in\PP(X)$, which is exponentially mixing for the dual-Lipschitz metric in the sense that~\eqref{4.12} holds for some positive constants~$\gamma$ and~$C$.
\end{theorem}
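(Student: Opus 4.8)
The plan is to deduce from the recurrence and squeezing hypotheses a uniform exponential decay of $\bE_\uuu\bigl[1\wedge d(u_k,u_k')\bigr]$, where $u_k,u_k'$ denote the two components of $\uuu_k$, and then to transfer this to the dual-Lipschitz metric. For the first (routine) reduction, fix $f\in L_b(X)$ with $\|f\|_L\le1$; by the extension property~\eqref{4.4}, for any $\uuu=(u,u')\in\XXX$ one has $(f,P_k(u,\cdot))-(f,P_k(u',\cdot))=\bE_\uuu\bigl[f(u_k)-f(u_k')\bigr]$, and since $|f(a)-f(b)|\le2\,(1\wedge d(a,b))$ for such $f$, taking the supremum over $f$ gives
\begin{equation}\label{e-plan1}
\|\PPPP_k^*\delta_u-\PPPP_k^*\delta_{u'}\|_L^*\le2\,\bE_\uuu\bigl[1\wedge d(u_k,u_k')\bigr].
\end{equation}
Integrating this against a product measure $\lambda\otimes\lambda'$ shows that $\|\PPPP_k^*\lambda-\PPPP_k^*\lambda'\|_L^*\le2\sup_{\uuu}\bE_\uuu\bigl[1\wedge d(u_k,u_k')\bigr]$ for all $\lambda,\lambda'\in\PP(X)$. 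Hence the theorem reduces to proving
\begin{equation}\label{e-plan2}
\sup_{\uuu\in\XXX}\bE_\uuu\bigl[1\wedge d(u_k,u_k')\bigr]\le Ce^{-\gamma k},\qquad k\ge0,
\end{equation}
with $C,\gamma>0$: taking $\lambda'$ to be a stationary measure (one exists by a standard argument using the compactness of~$X$) gives~\eqref{4.12}; applying the bound to two stationary measures and letting $k\to\infty$ gives uniqueness.

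\emph{A coupling time with a uniform exponential moment.} For $\uuu\in\BBB$ the squeezing time $\sigma=\min\{j\ge0:d(u_j,u_j')>q^jd\}$ satisfies $\bP_\uuu\{\sigma=+\infty\}\ge\delta_1$. Call an \emph{attempt}, started from a point of~$\BBB$, the running of the chain together with the observation of~$\sigma$: the attempt \emph{succeeds} if $\sigma=+\infty$ (probability $\ge\delta_1$), and otherwise \emph{fails} after the finite time~$\sigma$. I would then define stopping times recursively: set $\rho_0=\tau(\BBB)$; given $\rho_n$ (a time at which $\uuu_{\rho_n}\in\BBB$), if the attempt begun at~$\rho_n$ fails at local time $\sigma_n<\infty$, let $\rho_{n+1}$ be the first $k>\rho_n+\sigma_n$ with $\uuu_k\in\BBB$ (finite a.s.\ by~\eqref{4.10}); let $N$ be the first~$n$ for which the attempt begun at~$\rho_n$ succeeds, and put $\varrho=\rho_N$. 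By the strong Markov property, conditionally on the past each attempt succeeds with probability $\ge\delta_1$, so $\bP_\uuu\{N>n\}\le(1-\delta_1)^n$; moreover $\varrho=\tau_0+\sum_{i=0}^{N-1}(\sigma_i+\tau_{i+1})$ is a concatenation of $N+1$ recurrence excursions and $N$ failed squeezing excursions, each conditionally distributed as $\tau(\BBB)$ under some $\bP_\vvv$ with $\vvv\in\XXX$, resp.\ as $\sigma\,I_{\{\sigma<\infty\}}$ under some $\bP_\vvv$ with $\vvv\in\BBB$. Choosing $\beta'>0$ small enough, one has $\bE_\vvv\bigl(e^{\beta'\sigma}I_{\{\sigma<\infty\}}\bigr)\le\kappa$ for all $\vvv\in\BBB$ with $\kappa<1$ — possible because, by Hölder's inequality and~\eqref{4.11}, this quantity tends to $\bP_\vvv\{\sigma<\infty\}\le1-\delta_1$ as $\beta'\downarrow0$, uniformly in $\vvv\in\BBB$ — while $\bE_\vvv e^{\beta'\tau(\BBB)}\le C_*$ for all $\vvv\in\XXX$ with $C_*$ so close to~$1$ that $C_*\kappa<1$. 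Iterated conditioning, peeling off one excursion at a time, then yields the uniform bound $\bE_\uuu e^{\beta'\varrho}\le C_3:=C_*/(1-C_*\kappa)$.

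\emph{Conclusion.} On the successful attempt begun at~$\varrho$ we have $d(u_k,u_k')\le q^{\,k-\varrho}d$ for all $k\ge\varrho$. Splitting according to whether $\varrho\le k/2$, using $q^{\,k-\varrho}\le q^{k/2}$ on $\{\varrho\le k/2\}$ and Chebyshev's inequality on the complementary event,
\begin{equation}\label{e-plan3}
\bE_\uuu\bigl[1\wedge d(u_k,u_k')\bigr]\le q^{k/2}d+\bP_\uuu\{\varrho>k/2\}\le q^{k/2}d+C_3e^{-\beta'k/2},
\end{equation}
which is~\eqref{e-plan2} with $\gamma=\tfrac12\min\{\log q^{-1},\beta'\}$.

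The main obstacle is the construction and control of~$\varrho$. One must verify that $\bP_\uuu\{N>n\}\le(1-\delta_1)^n$ holds uniformly in the starting point \emph{and in the history} — which is exactly why a new attempt is launched only after re-hitting~$\BBB$, since~\eqref{4.11} is granted solely on~$\BBB$ — and that the state-independent constants in the exponential moments of the excursion lengths can be combined with the geometric tail of~$N$ without the resulting series diverging; this forces $\beta'$ to be taken strictly smaller than $\min\{\beta,\delta_2\}$ and then small enough for $C_*\kappa<1$. This book-keeping is carried out in~\cite{shirikyan-bf2008} (see also Section~3.1.3 in~\cite{KS-book}).
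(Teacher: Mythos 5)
Your argument is correct and is exactly the coupling argument on which this theorem rests; the paper itself does not prove Theorem~\ref{t4.2} but quotes it from~\cite{shirikyan-bf2008} and~\cite{KS-book}, where the proof proceeds precisely as you describe (reduction to $\bE_\uuu[1\wedge d(u_k,u_k')]$, construction of the first everywhere-successful squeezing attempt $\varrho$ by alternating recurrence excursions with squeezing attempts launched from~$\BBB$, a uniform exponential moment for~$\varrho$ via H\"older and iterated conditioning with $C_*\kappa<1$, and the final split over $\{\varrho\le k/2\}$). The delicate points you flag — launching attempts only from~$\BBB$ and shrinking $\beta'$ below $\min\{\beta,\delta_2\}$ so that the geometric tail of~$N$ dominates — are indeed the only nontrivial book-keeping, and you handle them correctly.
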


\subsection{Image of measures under regular mappings}
\label{s4.2}
 Let~$E$ be a separable Banach space represented as the direct sum of two closed subspaces~$F$ and~$F'$,
\begin{equation} \label{4.15}
E=F\dotplus F',
\end{equation}
where $\dim F<\infty$. We denote by~$\mathsf P$ and~$\mathsf P'$ the projections associated with~\eqref{4.15}. Let $\ell\in\PP(E)$ be a measure that has a bounded support and can be written as the tensor product of its marginals $\ell_F={\mathsf P}_*\ell$ and $\ell_{F'}=({\mathsf P}')_*\ell$. We assume that~$\ell_F$ has a $C^1$-smooth density with respect to the Lebesgue measure on~$F$. A proof of the following result can be found in~\cite{shirikyan-asens2015} (see Proposition~5.6). 

\begin{proposition} \label{p4.4}
In addition to the above hypotheses, assume that $\varPsi:E\to E$ is a mapping written in the form $\varPsi(\zeta)=\zeta+\varPhi(\zeta)$, where $\varPhi:E\to E$ is a $C^1$-smooth mapping such that $\varPhi(E)\subset F$ and 
\begin{equation} \label{4.16}
\|\varPhi(\zeta_1)\|\le\varkappa, \quad 
\|\varPhi(\zeta_1)-\varPhi(\zeta_2)\|\le\varkappa\,\|\zeta_1-\zeta_2\|
\quad\mbox{for all $\zeta_1,\zeta_2\in E$}, 
\end{equation}
where $\varkappa>0$ is a number. Then
\begin{equation} \label{4.17}
\|\ell-\varPsi_*(\ell)\|_{\mathrm{var}}\le C\varkappa,
\end{equation}
where $C>0$ does not depend on~$\varkappa$. 
\end{proposition}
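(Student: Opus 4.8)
The plan is to use the product structure of $\ell$ to collapse this infinite-dimensional statement to a one-parameter family of finite-dimensional change-of-variables estimates on $F$. First, note that we may freely assume $\varkappa\le\frac12$: since $\ell$ and $\varPsi_*\ell$ are probability measures, $\|\ell-\varPsi_*\ell\|_{\mathrm{var}}$ is bounded by a universal constant, so~\eqref{4.17} is trivial once $\varkappa$ is bounded away from $0$. Write a generic point of $E$ as $\zeta=(x,y)$ with $x={\mathsf P}\zeta\in F$ and $y={\mathsf P}'\zeta\in F'$. Because $\varPhi(E)\subset F$, the map $\varPsi$ changes only the $F$-coordinate: $\varPsi(x,y)=(\varPsi_y(x),y)$ with $\varPsi_y(x):=x+\varPhi(x,y)$. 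Since $\ell=\ell_F\otimes\ell_{F'}$, for every bounded measurable $f$ we have $\int f\,d\varPsi_*\ell=\int_{F'}\bigl(\int_F f(\varPsi_y(x),y)\,\ell_F(dx)\bigr)\ell_{F'}(dy)$, i.e.\ $\varPsi_*\ell$ disintegrates over the \emph{unchanged} $F'$-marginal $\ell_{F'}$ with conditional law $(\varPsi_y)_*\ell_F$, whereas $\ell$ has conditional law $\ell_F$. Hence $\|\ell-\varPsi_*\ell\|_{\mathrm{var}}=\int_{F'}\|\ell_F-(\varPsi_y)_*\ell_F\|_{\mathrm{var}}\,\ell_{F'}(dy)$, and it suffices to show $\|\ell_F-(\varPsi_y)_*\ell_F\|_{\mathrm{var}}\le C\varkappa$ with $C$ independent of $y$ and $\varkappa$.

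Fix $y\in F'$ and write $g=\varPsi_y$, $\varphi=\varPhi(\cdot,y)$, so $g=\mathrm{id}_F+\varphi$ on $F\cong\R^d$, $d=\dim F$, with $\|\varphi\|_\infty\le\varkappa$ and, since $\varphi$ is $C^1$ with Lipschitz constant $\le\varkappa$ on the convex set $F$, $\|D\varphi(x)\|\le\varkappa$ for all $x$. Then $g$ is a $C^1$-diffeomorphism of $F$ onto itself: it is injective (a Lipschitz perturbation of the identity with constant $<1$); it is onto, since for each $b\in F$ the map $x\mapsto b-\varphi(x)$ is a contraction of the complete space $F$ and its fixed point solves $g(x)=b$; and $Dg=I+D\varphi$ is everywhere invertible, so the inverse function theorem applies. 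Consequently $(\varPsi_y)_*\ell_F$ is absolutely continuous with density $x\mapsto\rho\bigl(g^{-1}(x)\bigr)\,\bigl|\det Dg^{-1}(x)\bigr|$, where $\rho$ is the $C^1$, compactly supported density of $\ell_F$, and therefore $\|\ell_F-(\varPsi_y)_*\ell_F\|_{\mathrm{var}}$ equals, up to the fixed normalisation constant in the definition of $\|\cdot\|_{\mathrm{var}}$, the integral $\int_F\bigl|\rho(x)-\rho(g^{-1}(x))\,|\det Dg^{-1}(x)|\bigr|\,dx$.

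It remains to estimate this $L^1$ integral. Bound the integrand by the triangle inequality as $|\rho(x)-\rho(g^{-1}(x))|+\rho(g^{-1}(x))\,\bigl|1-|\det Dg^{-1}(x)|\bigr|$. For the first term, $|x-g^{-1}(x)|=|\varphi(g^{-1}(x))|\le\varkappa$, and since $\rho\in C^1$ has compact support it is globally Lipschitz with a constant $L=L(\rho)$, so $|\rho(x)-\rho(g^{-1}(x))|\le L\varkappa$; moreover this term vanishes off a fixed bounded set (the $\varkappa$-neighbourhood of $\supp\rho$, contained in a ball not depending on $\varphi$ once $\varkappa\le\frac12$), so its integral is $\le C(\rho)\varkappa$. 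For the second term, $Dg^{-1}(x)=\bigl(I+D\varphi(g^{-1}(x))\bigr)^{-1}$, and $\|D\varphi\|\le\varkappa\le\frac12$ gives $\bigl|1-|\det(I+D\varphi)^{-1}|\bigr|\le C_d\varkappa$; combined with the change of variables $\int_F\rho(g^{-1}(x))\,dx=\int_F\rho(x)\,|\det Dg(x)|\,dx\le(1+\varkappa)^d\le 2$, its integral is also $\le C_d\varkappa$. Adding the two estimates yields $\|\ell_F-(\varPsi_y)_*\ell_F\|_{\mathrm{var}}\le C\varkappa$ with $C$ depending only on $\dim F$ and on $\rho$, and integrating over $y$ against $\ell_{F'}$ gives~\eqref{4.17}.

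The computations in the last paragraph and the verification that $g$ is a diffeomorphism are routine; the one step that needs genuine care is the disintegration at the start — the point being that $\varPhi$ moves only the $F$-component, so the $F'$-marginals of $\ell$ and $\varPsi_*\ell$ coincide, which is exactly what makes the product hypothesis on $\ell$ usable and reduces everything to a perturbed-density comparison in $\R^{\dim F}$. One should also keep straight that the constant $C$ is allowed to depend on $\ell$ (through $\rho$ and $\dim F$) but must not depend on $\varPhi$ or $\varkappa$, since that uniformity is precisely what is needed in the applications of the proposition.
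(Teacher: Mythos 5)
The paper does not actually contain a proof of Proposition~\ref{p4.4}: it defers to Proposition~5.6 of~\cite{shirikyan-asens2015}, so there is no in-text argument to compare yours against. Your proof is correct and is essentially the standard argument for that cited result: you use the product structure $\ell=\ell_F\otimes\ell_{F'}$ together with the fact that $\varPhi$ moves only the $F$-component to disintegrate the total variation over the unchanged $F'$-marginal, and then, fibre by fibre, you compare the $C^1$ compactly supported density $\rho$ with its pushforward under the diffeomorphism $\mathrm{id}_F+\varphi$ of the finite-dimensional space $F$, which is exactly where the hypotheses $\|\varphi\|_\infty\le\varkappa$ and $\Lip(\varphi)\le\varkappa$ enter. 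The only blemish is cosmetic: the bound $(1+\varkappa)^{\dim F}\le 2$ fails for $\dim F\ge2$ even with $\varkappa\le\tfrac12$, but all that step requires is a constant depending on $\dim F$ alone, which you have; likewise, only the inequality $\le$ (not equality) is needed in the disintegration of the total variation, so the measurable-selection issue you implicitly gloss over is harmless.
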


\subsection{Measurable coupling associated with a cost}
\label{s4.3}
Let~$X$ be a compact subset of a separable Banach space~$H$. For any $\e>0$, we consider a function $d_\e:X\times X\to \R$ given by
$$
d_\e(u_1,u_2)=\left\{
\begin{array}{cl}
1 & \mbox{if $\|u_1-u_2\|>\e$},\\ 
0 & \mbox{if $\|u_1-u_2\|\le\e$},
\end{array}
\right.
$$
where $\|\cdot\|$ is the norm in~$H$. Given two measures $\mu_1,\mu_2\in\PP(X)$, we define the {\it cost associated with~$d_\e$\/} by the relation 
\begin{align}
C_\e(\mu_1,\mu_2)&=\inf_{\MMMM\in\Pi(\mu_1,\mu_2)} 
\int\limits_{X\times X}d_\e(u_1,u_2)\,\MMMM(\dd u_1,\dd u_2)\notag\\
&=\inf_{\MMMM\in\Pi(\mu_1,\mu_2)} 
\MMMM(\DDD_\e^c), \label{4.31}
\end{align}
where $\Pi(\mu_1,\mu_2)$ stands for the set of measures on~$X\times X$ with marginals~$\mu_1$ and~$\mu_2$, and $\DDD_\e=\{(u_1,u_2)\in X\times X: \|u_1-u_2\|\le\e\}$. Kantorovich's celebrated theorem claims that the infimum in~\eqref{4.31} is always achieved; see Theorem~5.10 in~\cite{villani2009}.

Now let $(Z,\ZZ)$ be a measurable space and let  $\{\mu^z,z\in Z\}\subset\PP(X)$ be a family of measures. Recall that~$\{\mu^z\}$ is called a {\it random probability measure\/} on~$X$ if the function $z\mapsto \mu^z(\Gamma)$ from~$Z$ to~$\R$ is measurable  for any $\Gamma\in\BB(X)$. 
The following result is a simple consequence of Corollary~5.22 in~\cite{villani2009}; its proof can be found in~\cite{shirikyan-asens2015} (see Proposition~5.3). 

\begin{proposition} \label{p4.5}
Let $\{\mu_i^z,z\in Z\}$, $i=1,2$ be two random probability measures on~$X$, let $\e:Z\to\R$ be a positive measurable function, and let $\theta\in(0,1)$. Then there is a probability space $(\Omega,\FF,\IP)$ and measurable functions $\xi_i^z(\omega)$, $i=1,2$, from $\Omega\times Z$ to~$X$ such that, for any $z\in Z$, the law~$\MMMM^z$ of $(\xi_1^z,\xi_2^z)$ belongs to~$\Pi(\mu_1^z,\mu_2^z)$ and satisfies the inequality
\begin{equation} \label{4.32}
\int\limits_{X\times X}
d_{\e(z)}(u_1,u_2)\MMMM^z(\dd u_1,\dd u_2)\le C_{\theta\e(z)}(\mu_1^z,\mu_2^z).
\end{equation}
\end{proposition}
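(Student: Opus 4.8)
The plan is to exploit a single elementary monotonicity property of the cost functions $d_\e$ and then invoke a measurable selection theorem. First I would observe that, since $\theta\in(0,1)$, one has $d_{\e(z)}\le d_{\theta\e(z)}$ pointwise on $X\times X$ for every $z\in Z$: if $\|u_1-u_2\|>\e(z)$ then a fortiori $\|u_1-u_2\|>\theta\e(z)$, so both functions equal~$1$, while otherwise the left-hand side vanishes. Hence, if for each~$z$ we take $\MMMM^z$ to be \emph{any} coupling of $\mu_1^z$ and $\mu_2^z$ that is optimal for the cost $d_{\theta\e(z)}$, the integral of $d_{\e(z)}$ against $\MMMM^z$ is bounded by the integral of $d_{\theta\e(z)}$ against $\MMMM^z$, which by definition equals $C_{\theta\e(z)}(\mu_1^z,\mu_2^z)$; this is exactly~\eqref{4.32}. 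So the whole question reduces to producing such optimal couplings in a way that is measurable in~$z$, and then realising the resulting random probability measure on $X\times X$ as the joint law of a measurable pair of $X$-valued maps on a single probability space.

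For the measurable selection I would note first that for each fixed $\e>0$ the function $d_\e$ is lower semicontinuous on $X\times X$, because $\{\|u_1-u_2\|>\e\}$ is open; consequently, by Kantorovich's theorem (Theorem~5.10 in~\cite{villani2009}, already invoked above) the infimum defining $C_{\theta\e(z)}(\mu_1^z,\mu_2^z)$ is attained for every~$z$. To upgrade this pointwise existence to a choice depending measurably on~$z$, I would apply Corollary~5.22 in~\cite{villani2009}: its hypotheses hold because $X$ is a compact, hence Polish, space, $z\mapsto(\mu_1^z,\mu_2^z)$ is by assumption a pair of random probability measures, and $(z,u_1,u_2)\mapsto d_{\theta\e(z)}(u_1,u_2)$ is a measurable family of lower semicontinuous costs (measurability in~$z$ coming from the measurability of~$\e$). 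This yields a random probability measure $\{\MMMM^z,z\in Z\}$ on $X\times X$ with $\MMMM^z\in\Pi(\mu_1^z,\mu_2^z)$ and $\MMMM^z$ optimal for $d_{\theta\e(z)}$ for each~$z$.

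It then remains to realise $\{\MMMM^z\}$ by random variables. I would take $(\Omega,\FF,\IP)=\bigl([0,1],\BB([0,1]),\mathrm{Leb}\bigr)$ and use the standard measurable parametrisation of probability measures on a Polish space: there is a Borel map $G:\PP(X\times X)\times[0,1]\to X\times X$ with $G(m,\cdot)_*\mathrm{Leb}=m$ for every $m$. Setting $\bigl(\xi_1^z(\omega),\xi_2^z(\omega)\bigr)=G(\MMMM^z,\omega)$ produces jointly measurable maps on $\Omega\times Z$, by Borel measurability of~$G$ and measurability of $z\mapsto\MMMM^z$, whose law for each~$z$ is $\MMMM^z$; in particular $\xi_i^z$ has law $\mu_i^z$, so the marginal constraints hold, and~\eqref{4.32} follows from the first paragraph.

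I expect the only genuine obstacle to be the measurable selection step: extracting the optimal transference plan measurably in the parameter~$z$. Everything else—the monotonicity of $d_\e$ in~$\e$, the lower semicontinuity of $d_\e$, and the realisation of a random measure by random variables—is routine, and the key analytic input (existence and measurable dependence of optimal plans) is precisely the content of Corollary~5.22 in~\cite{villani2009}, which is why the statement is described as a simple consequence of it.
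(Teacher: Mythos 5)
Your first paragraph is sound: $d_{\e(z)}\le d_{\theta\e(z)}$ pointwise, optimal plans for the lower semicontinuous cost $d_{\theta\e(z)}$ exist by Kantorovich's theorem, and the realisation of a measurable family of laws by random variables over $([0,1],\mathrm{Leb})$ is standard. The gap is exactly where you locate it, but your fix does not close it: Corollary~5.22 in \cite{villani2009} provides a measurable selection of optimal plans for a \emph{continuous} cost function, whereas $d_{\theta\e(z)}$ is the indicator of an open set and is only lower semicontinuous. For a merely lower semicontinuous cost the set of optimal plans is still nonempty and compact, but its measurable dependence on the marginals (and here also on the cost, which varies with~$z$) is not what that corollary delivers, and it is not routine. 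The tell-tale sign that something is off is that your argument never genuinely uses $\theta<1$: if one could measurably select an optimal plan for $d_{\theta\e(z)}$, one could just as well select one for $d_{\e(z)}$ itself and obtain the stronger bound $C_{\e(z)}(\mu_1^z,\mu_2^z)$ on the right-hand side of~\eqref{4.32}, which would make the parameter~$\theta$ pointless. The statement is formulated with $\theta<1$ precisely because that stronger version is not what the method yields.

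The role of $\theta$ is to create room for a continuous surrogate cost, and this is how the proof referred to in the paper (Proposition~5.3 of \cite{shirikyan-asens2015}) proceeds: choose a continuous function $\chi_z:\R_+\to[0,1]$ with $\chi_z(r)=0$ for $r\le\theta\e(z)$ and $\chi_z(r)=1$ for $r\ge\e(z)$, and set $c_z(u_1,u_2)=\chi_z(\|u_1-u_2\|)$, so that $d_{\e(z)}\le c_z\le d_{\theta\e(z)}$. The cost $c_z$ is continuous in $(u_1,u_2)$ and depends on $z$ only through the scalar $\e(z)$, so the measurable selection of Corollary~5.22 applies and produces a random probability measure $\{\MMMM^z\}$ with $\MMMM^z\in\Pi(\mu_1^z,\mu_2^z)$ optimal for~$c_z$. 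Then
$$
\int_{X\times X}d_{\e(z)}\,\dd\MMMM^z\le\int_{X\times X}c_z\,\dd\MMMM^z
=\inf_{\MMMM\in\Pi(\mu_1^z,\mu_2^z)}\int_{X\times X}c_z\,\dd\MMMM
\le C_{\theta\e(z)}(\mu_1^z,\mu_2^z),
$$
which is~\eqref{4.32}. With this modification the remainder of your argument, in particular the realisation step, goes through unchanged.
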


We now formulate a simple result providing an estimate for $C_\e(\mu_1,\mu_2)$. Its proof is based on the Kantorovich duality (see Theorem~5.10 in~\cite{villani2009}) and can be found in~\cite{shirikyan-asens2015} (see Proposition~5.2). 

\begin{lemma} \label{l4.6}
Let~$\mu_1$ and~$\mu_2$ be two probability measures on a compact metric space~$X$ that are the laws of some random variables~$U_1$ and~$U_2$ defined on a probability space $(\Omega,\FF,\IP)$.  Suppose there is a measurable mapping $\varPsi:\Omega\to\Omega$ satisfying
\begin{equation} \label{4.33}
d_X\bigl(U_1(\omega),U_2(\varPsi(\omega))\bigr)\le \e
\quad\mbox{for a.e.~$\omega\in\Omega$},
\end{equation}
where $d_X$ is the metric on~$X$ and~$\e>0$ is a number. Then
\begin{equation} \label{4.34}
C_\e(\mu_1,\mu_2)\le 2\,\|\IP-\varPsi_*(\IP)\|_{\mathrm{var}}. 
\end{equation}
\end{lemma}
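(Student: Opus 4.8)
The plan is to apply the Kantorovich duality for the lower semicontinuous cost $d_\e$ (Theorem~5.10 in~\cite{villani2009}): since $\{(u_1,u_2):\|u_1-u_2\|>\e\}$ is open, the function $d_\e$ is lower semicontinuous, and the duality gives
\[
C_\e(\mu_1,\mu_2)=\sup\bigl\{(f,\mu_1)+(g,\mu_2)\bigr\},
\]
the supremum being over all bounded measurable pairs $(f,g)$ with $f(u_1)+g(u_2)\le d_\e(u_1,u_2)$ for all $u_1,u_2\in X$ (here we identify the metric $d_X$ with the norm distance on~$H$, so that $d_\e$ is defined). It therefore suffices to bound each such functional by the right-hand side of~\eqref{4.34}.

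The first step is a reduction to normalised potentials satisfying $0\le f\le1$ and $-1\le g\le0$. Replacing $(f,g)$ by the pair obtained after two successive $c$-transforms (with respect to $d_\e$) does not decrease the functional and preserves admissibility; since $d_\e$ takes values in $[0,1]$, the resulting potentials have oscillation at most~$1$, and since $d_\e(x,x)=0$ one has $f(x)+g(x)\le0$ pointwise. A constant shift $(f,g)\mapsto(f-\inf f,\,g+\inf f)$ — which changes neither the functional (as $\mu_1,\mu_2$ are probability measures) nor admissibility — then places $f$ in $[0,1]$ and $g$ in $[-1,0]$. The second step is the actual estimate. Writing $\mu_i=(U_i)_*\IP$ and splitting off the pushforward by $\varPsi$, one gets
\[
(f,\mu_1)+(g,\mu_2)
=\int_\Omega\bigl(f(U_1)+g(U_2\circ\varPsi)\bigr)\,\dd\IP
+\int_\Omega g(U_2)\,\dd(\IP-\varPsi_*\IP).
\]
By admissibility and~\eqref{4.33}, the first integrand is $\le d_\e(U_1,U_2\circ\varPsi)=0$ almost surely, so the first term is $\le0$; the second term is bounded by $2\|g\|_\infty\,\|\IP-\varPsi_*(\IP)\|_{\mathrm{var}}\le 2\|\IP-\varPsi_*(\IP)\|_{\mathrm{var}}$. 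Taking the supremum over admissible $(f,g)$ yields~\eqref{4.34}.

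The only genuinely delicate point is the normalisation of the potentials, which is what makes the last bound uniform; everything else is a one-line computation once the duality formula is available. If one prefers to avoid duality, an equally short route is to observe that $(U_1,U_2\circ\varPsi)$ realises a coupling of $\mu_1$ with $(U_2)_*(\varPsi_*\IP)$ supported in $\DDD_\e$, and to glue it with a maximal coupling of $(U_2)_*(\varPsi_*\IP)$ and $\mu_2$; the relevant marginal of the glued measure lies in $\Pi(\mu_1,\mu_2)$ and charges $\DDD_\e^c$ by at most a constant multiple of $\|\IP-\varPsi_*(\IP)\|_{\mathrm{var}}$, since pushing forward does not increase the total variation.
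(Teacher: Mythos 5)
Your main argument is correct and coincides with the proof the paper points to (Proposition~5.2 in~\cite{shirikyan-asens2015}): Kantorovich duality for the lower semicontinuous cost~$d_\e$, normalisation of the potentials to $0\le f\le 1$, $-1\le g\le 0$, and the splitting of $(g,\mu_2)$ along $\varPsi_*\IP$, so there is nothing to add on that route beyond the (standard, and harmless here) measurability caveat for $c$-transforms. Your alternative gluing argument is also sound and in fact sharper: the pair $(U_1,U_2\circ\varPsi)$ is a coupling of $\mu_1$ and $(U_2)_*(\varPsi_*\IP)$ supported in~$\DDD_\e$, and gluing it with a maximal coupling of $(U_2)_*(\varPsi_*\IP)$ and~$\mu_2$ charges $\DDD_\e^c$ by at most $\|(U_2)_*(\varPsi_*\IP)-\mu_2\|_{\mathrm{var}}\le\|\IP-\varPsi_*(\IP)\|_{\mathrm{var}}$, so the factor~$2$ in~\eqref{4.34} can actually be dropped.
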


\subsection{Restriction to and extension from the boundary}
\label{s4.4}
Let $D\subset\R^2$ be a bounded domain that has an infinitely smooth boundary~$\p D$ and satisfies the hypotheses mentioned in the beginning of Section~\ref{s3.0}. Given a real number $s\ge1$, we write 
\begin{equation} \label{4.61}
\XX_s=\bigl\{u\in L^2(J,H_\sigma^{s+1}): \p_t u\in L^2(J,H_\sigma^{s-1})\bigr\},
\end{equation}
where $J=[0,1]$, $H^s(D,\R^2)$ is the usual Sobolev space of order~$s$ and~$H_\sigma^s$ denotes the space of divergence-free vector fields in~$H^s(D,\R^2)$. Recall that, for $s\ge3/2$, we also defined the space~$G_s$ of functions $v\in L^2(J,H^{s+1/2}(\p D))$ such that $\p_t v\in L^2(J,H^{s-3/2}(\p D))$ and\,\footnote{Let us note that if the equality in~\eqref{4.35} holds for a.e.~$t\in J$, then the continuity of~$v$ from~$J$ to~$ L^2(\p D,\R^2)$ implies it is true for all~$t\in J$.}
\begin{equation} \label{4.35}
\int_{\p D}\langle v(t),\nnn_x\rangle\dd\sigma=0\quad\mbox{for $t\in J$}. 
\end{equation}
The space~$\XX_s$ and~$G_s$ are endowed with the natural Hilbert structures and the corresponding norms. The following proposition gives a characterisation of traces of the functions in~$\XX_s$ to the lateral boundary $\Sigma=J\times\p D$. Its proof can be found in the paper~\cite{FGH-2002} (see Theorems~2.1 and~2.2), where the more complicated 3D case is discussed. For the reader's convenience, we reproduce here a complete proof in the 2D case, establishing an additional property. 

\begin{proposition} \label{p4.7}
For any integer $s\ge2$, the operator~$R$ taking $u\in\XX_s$ to its restriction to~$\Sigma$ is continuous from~$\XX_s$ to~$G_s$ and possesses a right inverse in the following sense: there is a continuous operator $Q:G_2\to\XX_2$ such that $RQv=v$ for $v\in G_2$,  and for any integer $s\ge2$, we have
\begin{equation} \label{4.36}
\|Qv\|_{\XX_s}\le C_s\|v\|_{G_s}\quad
\mbox{for $v\in G_s$}, 
\end{equation}
where $C_s>0$ does not depend on~$v$. 
\end{proposition}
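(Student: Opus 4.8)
Proof proposal (plan).

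The plan is to treat the two assertions separately, since they are of quite different nature. Continuity of $R$ is the ``easy'' direction: the restriction to $\Sigma=J\times\partial D$ is simply the spatial trace applied in each time slice, so it follows from the classical elliptic trace theorem together with the divergence theorem (for the flux condition). The construction of $Q$ is the substantive part, and the cleanest route is to solve, for a.e.\ $t\in J$, the stationary Stokes problem with boundary datum $v(t)$: the hypothesis $\int_{\partial D}\langle v(t),\nnn_x\rangle\,\dd\sigma=0$ is precisely its solvability condition, and the time-frozen estimates integrate. An equivalent, more hands-on alternative is a universal Sobolev extension followed by a Bogovski\u{\i} correction.

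For $R$, fix an integer $s\ge2$. Since $s+1>\tfrac12$ and $s-1>\tfrac12$, the spatial traces $H^{s+1}(D,\R^2)\to H^{s+1/2}(\partial D,\R^2)$ and $H^{s-1}(D,\R^2)\to H^{s-3/2}(\partial D,\R^2)$ are bounded. Applying the first to $u(t)$ for a.e.\ $t$ and integrating the square of the pointwise bound over $J$ shows $Ru\in L^2(J,H^{s+1/2}(\partial D))$. A bounded linear operator commutes with Bochner integration, hence with the weak time derivative, so $\partial_t(Ru)=\mathrm{tr}(\partial_tu)$, and the second trace bound gives $\partial_t(Ru)\in L^2(J,H^{s-3/2}(\partial D))$. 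The divergence theorem yields $\int_{\partial D}\langle(Ru)(t),\nnn_x\rangle\,\dd\sigma=\int_D\diver u(t)\,\dd x=0$ for a.e.\ $t$; since $Ru\in L^2(J,H^{s+1/2}(\partial D))\cap H^1(J,H^{s-3/2}(\partial D))\hookrightarrow C(J,L^2(\partial D,\R^2))$ by interpolation, this holds for every $t\in J$ (cf.\ the footnote after~\eqref{4.35}). Hence $Ru\in G_s$ with the expected bound.

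For $Q$, I would use the stationary Stokes operator. By the standard theory on a smooth bounded domain (the presence of holes being irrelevant here), the map $\mathcal S$ that sends a boundary field $g$ with $\int_{\partial D}\langle g,\nnn_x\rangle\,\dd\sigma=0$ to the velocity of the unique solution of $-\Delta u+\nabla q=0$, $\diver u=0$ in $D$, $u|_{\partial D}=g$, is a bounded linear operator, continuous on the whole Sobolev scale; in particular it maps $H^{s+1/2}(\partial D,\R^2)$ into $H_\sigma^{s+1}(D,\R^2)$ and $H^{s-3/2}(\partial D,\R^2)$ into $H_\sigma^{s-1}(D,\R^2)$ for every integer $s\ge2$ (the lowest case $s=2$ being the basic weak-solvability statement). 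Given $v\in G_2$ we set $(Qv)(t):=\mathcal S(v(t))$; then $Qv\in L^2(J,H_\sigma^3)$, and since $\partial_tv(t)$ again has vanishing total flux, $\mathcal S(\partial_tv(t))$ makes sense and equals $\partial_t(Qv)(t)$ (commutation of $\mathcal S$ with the weak time derivative), so $\partial_t(Qv)\in L^2(J,H_\sigma^1)$ and $Qv\in\XX_2$. The boundary condition of $\mathcal S g$ gives $RQv=v$, and $v(0)=0$ forces $(Qv)(0)=\mathcal S(0)=0$, so the vanishing at $t=0$ needed in Section~\ref{s3.0} is preserved. Running the same two estimates on the scales $H^{s+1/2}\to H^{s+1}$ and $H^{s-3/2}\to H^{s-1}$ and integrating in $t$ yields $\|Qv\|_{\XX_s}\le C_s\|v\|_{G_s}$ for every integer $s\ge2$ \emph{with one and the same} $Q$ — this uniformity is the ``additional property'' beyond~\cite{FGH-2002}.

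The only conceptually delicate point, and the place where the geometry of $D$ could in principle matter, is the divergence-free constraint: one must verify that the sole compatibility condition for extending $v(t)$ to a divergence-free field on $D$ is the vanishing of the \emph{total} flux $\int_{\partial D}\langle v(t),\nnn_x\rangle\,\dd\sigma$, with no separate requirement on the circulation through each interior component $\partial D_i$ — the reason being that the extension is allowed to carry arbitrary flux through the individual components, which is exactly why the stationary Stokes problem (equivalently, a Bogovski\u{\i}-type corrector) is solvable. If one wishes to bypass Stokes regularity theory, one extends $v(t)$ by a fixed universal extension operator $\mathcal E\colon H^{s+1/2}(\partial D,\R^2)\to H^{s+1}(D,\R^2)$ applied pointwise in $t$ and subtracts $\mathcal B\bigl(\diver\mathcal Ev(t)\bigr)$, where $\mathcal B$ is a Bogovski\u{\i} operator solving $\diver w=\phi$ with $w|_{\partial D}=0$; this is legitimate because $\int_D\diver\mathcal Ev(t)\,\dd x$ equals the total flux of $v(t)$, which vanishes. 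The rest is routine bookkeeping with the mapping properties of $\mathrm{tr}$ and $\mathcal S$ (or $\mathcal E$, $\mathcal B$) on the relevant Sobolev scales.
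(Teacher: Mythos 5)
Your argument is correct, and the part on the continuity of $R$ (slice-wise trace theorem, commutation of a fixed bounded operator with the weak time derivative, divergence theorem for the flux condition, continuity in $t$ to upgrade ``a.e.'' to ``every'') coincides with the paper's. For the right inverse $Q$ you follow the same architecture as the paper --- reduce to a time-independent lifting operator applied slice-wise, then integrate the frozen-time estimates --- but you construct the spatial lift differently: you take the solution operator of the stationary Stokes problem with Dirichlet data $v(t)$ (or a Sobolev extension corrected by a Bogovski\u{\i} operator), whereas the paper builds an explicit extension $\SE v=u_n+u_\tau$ with $u_n=\nabla^\bot p+\sum_ic_i\nabla z_i$ ($p$ harmonic with Dirichlet data a boundary primitive of $-v_n$, the $z_i$ harmonic functions redistributing the fluxes through the holes) and $u_\tau=\nabla^\bot q$ ($q$ biharmonic with $q|_{\p D}=0$, $\p q/\p\nnn=\tilde v_\tau$). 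Your route is shorter and handles the multiply connected geometry for free, since the Stokes problem is solvable under the total-flux condition alone --- a point you rightly single out as the only delicate one. What the paper's more laborious construction buys is the explicit stream-function form $\SE v=\nabla^\bot(\SE'v)$, which is not incidental: it is exactly what Proposition~\ref{p4.8} needs for Hopf's cutoff $\nabla^\bot(\theta_\delta\,\SE'v)$, and it reappears in Propositions~\ref{p4.9} and~\ref{p3.12}; a Stokes-based $Q$ would not directly supply that representation. Two small caveats on your side remarks: the higher-order bound for the Bogovski\u{\i} corrector with data in $H^{s}\cap L^2_0$ (rather than $H^s_0$) is true on smooth domains but needs a reference (e.g.\ Amrouche--Girault) rather than the basic star-shaped estimate; and for the claim $\p_t(Qv)=\mathcal S(\p_tv)$ you should note explicitly, as you implicitly do, that $\p_tv(t)$ again has vanishing total flux because $t\mapsto\int_{\p D}\langle v(t),\nnn_x\rangle\,\dd\sigma$ is identically zero. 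Neither affects the validity of the proof.
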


\begin{proof}
The standard trace theorem for Sobolev spaces (e.g., see Chapter~4 in~\cite{adams1975})  implies that, for any $u\in\XX_s$, we have $u|_\Sigma\in L^2(J,H^{s+1/2}(\p D))$ and $\p_t(u|_\Sigma)\in L^2(J,H^{s-3/2}(\p D))$, and the corresponding norms are bounded by~$\|u\|_{\XX_s}$. Furthermore, since $\diver u=0$ in $J\times D$, and the function $t\mapsto v(t)$ is continuous from~$J$ to~$H^1(D,\R^2)$, we have $\int_D\diver u(t)\,\dd x=0$ for $t\in J$, whence it follows that~\eqref{4.35} holds. Thus, the restriction operator $R:\XX_s\to G_s$ is continuous. To construct its right inverse, we shall need the  lemma below. For $r\ge0$, let us denote by~$\dot H^r=\dot H^r(\p D,\R^2)$ the space of vector functions $v:\p D\to\R^2$ that belong to the Sobolev space of order~$r$ and satisfy the relation $\int_{\p D}\langle v,\nnn_x\rangle\dd\sigma=0$. 

\begin{lemma} \label{l4.8}
There is a continuous operator $\SE:\dot H^{1/2}(\p D,\R^2)\to H_\sigma^1(D,\R^2)$ such that the restriction of~$\SE v$ to~$\p D$ coincides with~$v$. Moreover, for any integer $s\ge1$ there is $C_s'>0$ such that 
\begin{equation} \label{4.37}
\|\SE v\|_s\le C_s'\|v\|_{s-1/2}\quad\mbox{for any $v\in \dot H^{s-1/2}$}. 
\end{equation}
\end{lemma}

Taking this lemma for granted, let us complete the proof of the proposition. Let us fix $v\in G_2$. It follows from~\eqref{4.35} that $v(t)\in \dot H^{5/2}$ for $t\in J$. We can thus define a function~$u(t,x)$ by the relation $u(t,\cdot)=\SE v(t,\cdot)$ for $t\in J$, where~$\SE $ is the operator in Lemma~\ref{l4.8}. By continuity of~$\SE $, we have $u\in L^2(J,H_\sigma^3)$. Moreover, since $\p_t (\SE v(t))=\SE (\p_tv(t))$, we see that $\p_tu\in L^2(J,H^1)$, so that $u\in \XX_2$. The above argument also shows that~\eqref{4.36}  holds for $s=2$. Finally, it follows from~\eqref{4.37} that~\eqref{4.36} is valid for any $s\ge2$. This completes the proof of the proposition. 
\end{proof}

\begin{remark}
The proof of Proposition~\ref{p4.7} implies that if $v=0$ in a region $[\alpha,\beta]\times\p D$, then~$Qv$ vanishes in $[\alpha,\beta]\times D$.
\end{remark}

\begin{proof}[Proof of Lemma~\ref{l4.8}]
To make the main idea more transparent, we first consider the case in which~$D$ is simply-connected. Let us fix a function $v\in \dot H^{1/2}(\p D,\R^2)$ and write it in the form
\begin{equation} \label{4.38}
v=v_n\nnn_x+v_\tau\ttau_x, \quad v_n(x)=\langle v(x),\nnn_x\rangle, \quad
v_\tau(x)=\langle v(x),\ttau_x\rangle,
\end{equation}
where $\nnn_x$ and~$\ttau_x$ are the unit (outward) normal and tangent vectors at a point $x\in\p D$ chosen so that $(\nnn_x,\ttau_x)$ is a positively oriented basis of~$\R^2$. We shall construct two vector functions~$u_n$ and~$u_\tau$ belonging to~$H_\sigma^1(D,\R^2)$ such that
\begin{equation} \label{4.39}
\langle u_n,\nnn\rangle\bigr|_{\p D}=v_n, \quad
\langle u_\tau,\nnn\rangle\bigr|_{\p D}=0, \quad
\langle u_\tau,\ttau\rangle\bigr|_{\p D}=v_\tau-\langle u_n,\ttau\rangle\bigr|_{\p D}.
\end{equation}
The operator~$\SE $ is then defined by $\SE v=u_n+u_\tau$. Moreover, the construction will imply that~\eqref{4.37} is also satisfied. 

\smallskip
{\it Step 1: Construction of~$u_n$\/}. We seek~$u_n$ in the form $u_n=\nabla^\bot p$, where $\nabla^\bot=(-\p_2,\p_1)$ and $p\in H^2(D)$. 
The first relation in~\eqref{4.39} can be rewritten in terms of the derivatives of~$p$ and the tangent vector~$\ttau$ as follows:
$$
\frac{\p p}{\p\ttau}\Bigr|_{\p D}=-v_n. 
$$
Since $\int_{\p D}v_n\dd\sigma=0$, we can find a function $w\in H^{3/2}(\p D)$ such that 
\begin{equation} \label{4.40}
\frac{\p w}{\p\ttau}=-v_n,\quad \|w\|_{H^{3/2}}\le C_1\|v_n\|_{H^{1/2}}. 
\end{equation}
Let $p\in H^2(D)$ be a harmonic function in~$D$ such that $p|_{\p D}=w$. Then
\begin{equation} \label{4.41}
\|p\|_{H^2}\le C_2\|w\|_{H^{3/2}}.
\end{equation}
Combining~\eqref{4.40} and~\eqref{4.41}, we see that the function $u_n=(-\p_2p,\p_1p)$ satisfies the required properties. Moreover, the construction implies that 
\begin{equation} \label{4.42}
\|u_n\|_{H^s}\le C_3\|v_n\|_{H^{s-1/2}}\quad\mbox{for any integer $s\ge1$},
\end{equation}
where $C_3>0$ depends only on~$s$.

\smallskip
{\it Step 2: Construction of~$u_\tau$\/}. The required function is sought in the form $u_\tau=\nabla^\bot q$, where $q\in H^2(D)$ is an unknown function. 
Let us note that the function~$\tilde v_\tau$ defined by the right-hand side of the third relation in~\eqref{4.39} belongs to the space~$H^{1/2}(\p D)$ and satisfies the inequality 
\begin{equation} \label{4.45}
\|\tilde v_\tau\|_{H^{s-1/2}}\le C_4\|v\|_{H^{s-1/2}}\quad\mbox{for any integer $s\ge1$},
\end{equation}
where $C_4>0$ depends only on~$s$. Furthermore, the second and third relations in~\eqref{4.39} with $u_\tau=(-\p_2 q,\p_1q)$ are equivalent to
\begin{equation} \label{4.43}
q\bigr|_{\p D}=C, \quad \frac{\p q}{\p\nnn}\Bigr|_{\p D}=\tilde v_\tau,
\end{equation}
where $C\in\R$ is a number. Since~$u_\tau$ is obtained by differentiating~$q$, we can take $C=0$. The elliptic equation $\Delta^2q=0$ supplemented with the boundary conditions~\eqref{4.43} has a unique solution $q\in H^2(D)$, and the elliptic regularity implies that $\|q\|_{H^{s+1}}\le C_5\|\tilde v_\tau\|_{H^{s-1/2}}$. Combining this with~\eqref{4.45}, we see that the function $u_\tau=(-\p_2q,\p_1 q)$ possesses all required properties. 

\smallskip
{\it Step 3: General case\/}.
The construction of~$u_\tau$ in Step~2 does not use the assumption that~$D$ should be simply-connected. We thus need only to extend the argument of Step~1 to the case of an arbitrary domain satisfying the hypotheses of Section~\ref{s3.0}. 

Let us denote by~$\Gamma_i$ the boundary of the domain~$D_i$ and by~$\widetilde\Gamma$ that of~$\widetilde D$. For $1\le i\le m$, let $z_i\in C^\infty(\overline D)$ be a harmonic function in~$D$ with zero mean value such that 
\begin{equation} \label{4.44}
\frac{\p z_i}{\p\nnn}\Bigr|_{\Gamma_i}=1, \qquad
\frac{\p z_i}{\p\nnn}\Bigr|_{\Gamma_j}=0\quad\mbox{for $j\ne i$}, 
\qquad \frac{\p z_i}{\p\nnn}\Bigr|_{\widetilde\Gamma}
=-|\Gamma_i|/|\widetilde\Gamma|,
\end{equation}
where $|\gamma|$ stands for the length of a curve~$\gamma$. It is straightforward to check that the boundary conditions~\eqref{4.44} satisfy the compatibility condition for the existence of a solution of the Neumann problem for the Laplace equation (see Proposition~7.7 in~\cite[Chapter~5]{taylor1996}), so that the functions~$z_i$ are well defined. We seek~$u_n$ in the form
\begin{equation} \label{4.46}
u_n=\nabla^\bot p+\sum_{i=1}^mc_i\nabla z_i,
\end{equation}
where $p\in H^2(D)$ and $c_i\in\R$ are chosen below. The first relation in~\eqref{4.39} is equivalent to
\begin{align} 
-\frac{\p p}{\p\ttau}\Bigr|_{\Gamma_i}
&=v_n^{(i)}:=v_n\bigr|_{\Gamma_i}-c_i\quad\mbox{for $1\le i\le m$}, \label{4.47}\\
-\frac{\p p}{\p\ttau}\Bigr|_{\widetilde\Gamma}
&=\tilde v_n:=v_n\bigr|_{\widetilde\Gamma}+|\widetilde \Gamma|^{-1}
\sum_{i=1}^m c_i|\Gamma_i|.  \label{4.48}
\end{align}
Choosing $c_i=|\Gamma_i|^{-1}\int_{\Gamma_i}v_n\dd\sigma$, we see that 
\begin{equation} \label{4.49}
\int_{\widetilde\Gamma}\tilde v_n\dd\sigma=0, \quad
\int_{\Gamma_i}v_n^{(i)}\dd\sigma=0\quad\mbox{for $1\le i\le m$}, 
\end{equation}
where we used the fact that 
$$
0=\int_{\p D}v_n\dd\sigma=\int_{\widetilde\Gamma}v_n\dd\sigma
+\sum_{i=1}^m\int_{\Gamma_i}v_n\dd\sigma. 
$$
It follows from~\eqref{4.49} that there are functions $w_i\in H^{3/2}(\Gamma_i)$ and $\widetilde w\in H^{3/2}(\widetilde\Gamma)$ such that (cf.~\eqref{4.40})
\begin{gather} 
\frac{\p w_i}{\p\ttau}\Bigr|_{\Gamma_i}=-v_n^{(i)},\quad 
\frac{\p \widetilde w}{\p\ttau}\Bigr|_{\widetilde\Gamma}=-\tilde v_n,\label{4.50}\\
\sum_{i=1}^m\|w_i\|_{H^{s+1/2}}+\|\widetilde w\|_{H^{s+1/2}}\le C_6\|v_n\|_{H^{s-1/2}}. 
\label{4.51}
\end{gather}
Let $p\in H^2(D)$ be a harmonic function in~$D$ such that
$$
p\bigr|_{\Gamma_i}=w_i\quad\mbox{for $1\le i\le m$},
\qquad p\bigr|_{\widetilde\Gamma}=\widetilde w. 
$$
Combining this with~\eqref{4.47}, \eqref{4.48}, and~\eqref{4.50}, we see that the function~$u_n$ defined by~\eqref{4.46} with the above choice of~$c_i$  belongs to~$H_\sigma^1(D,\R^2)$ and satisfies the first relation in~\eqref{4.39}. Finally, it follows from~\eqref{4.51} that~\eqref{4.42} also holds. This completes the proof of the lemma.  
\end{proof}

\begin{remark} \label{r3.8}
Analysing the proof of Proposition~\ref{p4.7}, it is straightforward to see that the result remains true for any real number $s\in(\frac32,2)$. More precisely, the application $Q:G_2\to\XX_2$ can be extended by continuity to an operator $Q_s:G_s\to\XX_s$ for $s\in(\frac32,2)$ such that $RQ_sv=v$ for any $v\in G_s$. 
\end{remark}

We now consider a particular case of the above extension theorem when the mean value of the normal component of $v\in G_s$ is zero not only on the entire boundary, but also on each of the connected components. In this case, it is possible get an extension that satisfies an additional property. Namely, let us denote by~$G_s^0$ the space of functions $v\in G_s$ such that 
\begin{equation} \label{4.061}
\int_{\p D_i}\langle v(t),\nnn_x\rangle\dd\sigma=0\quad
\mbox{for $t\in J$ and $1\le i\le m$}.
\end{equation}
The following result is due to E.~Hopf, and its proof is essentially contained in Section~II.1.4 in~\cite{temam1979}, so that we only outline the corresponding argument.

\begin{proposition} \label{p4.8}
For any $\e>0$, there is a linear operator $Q_\e:G_2^0\to\XX_2$ such that, for any $v\in G_2^0$, the restriction of~$Q_\e v$ to $J\times\p D$ coincides with~$v$, inequality~\eqref{4.36} holds for $Q=Q_\e$ and a number~$C_s$ depending on~$\e$ and~$s$, and 
\begin{equation} \label{4.62}
\bigl|\bigl(\langle u,\nabla\rangle (Q_\e v)(t),u\bigr)_{L^2}\bigr|
\le \e\,\|v(t)\|_{3/2}\|u\|_1^2
\quad \mbox{for any $u\in V$, $t\in J$}. 
\end{equation}
\end{proposition}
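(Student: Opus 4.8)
The plan is to carry out E.~Hopf's classical construction of a divergence-free extension supported in a thin collar of $\p D$, performed pointwise in $t\in J$ and with the constants tracked so as to yield the sharp dependence on $\|v(t)\|_{3/2}$ in~\eqref{4.62}. Two standard ingredients will be used. (i) \emph{Hardy's inequality} on the smooth domain $D$: there is $C_H>0$ such that $\int_D|u(x)|^2\,d(x)^{-2}\,\dd x\le C_H\|u\|_1^2$ for all $u\in H_0^1(D,\R^2)$, where $d(x)=\dist(x,\p D)$. (ii) \emph{Hopf's logarithmic cut-off}: for every $\delta\in(0,1)$ there is $\beta_\delta\in C^\infty(\overline D)$ with $\beta_\delta\equiv1$ on $\{d\le\delta e^{-1/\delta}\}$, $\supp\beta_\delta\subset\{d<\delta\}$, and $d(x)\,|\nabla^k\beta_\delta(x)|\le C_k\,\delta$ for $x\in D$ and $k\ge1$; it is obtained by composing a fixed profile with $\log d(\cdot)$.

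For an integer $s\ge2$ and $v\in G_s^0$ (note $G_s^0\subset G_2^0$), and for a.e.\ $t\in J$, the first step is to set $b(t)=\SE\bigl(v(t)\bigr)$, where $\SE$ is the extension operator of Lemma~\ref{l4.8}; then $b(t)\in H_\sigma^{s+1}(D)$, $b(t)|_{\p D}=v(t)$, and the flux of $b(t)$ through each interior component $\p D_i$ equals $\int_{\p D_i}\langle v(t),\nnn_x\rangle\,\dd\sigma=0$ by~\eqref{4.061}. Hence $b(t)$ has a single-valued stream function $\psi(t)$ with $\nabla^\bot\psi(t)=b(t)$, which I normalise by $\int_D\psi(t)\,\dd x=0$ so that $v\mapsto\psi(\cdot)$ is linear; since $\nabla\psi(t)$ is a rotation of $b(t)$, the Poincaré inequality and the characterisation of integer-order Sobolev spaces give $\psi(t)\in H^{s+2}(D)$ with $\|\psi(t)\|_{H^{s+2}}\le C_s\|v(t)\|_{H^{s+1/2}}$, and, using Lemma~\ref{l4.8} with exponent $2$ and the embedding $H^3(D)\hookrightarrow C^1(\overline D)$, also $\|\psi(t)\|_{C^1(\overline D)}\le C\|v(t)\|_{3/2}$. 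I then define
\[
Q_\e v:=\nabla^\bot\bigl(\beta_\delta\,\psi(\cdot)\bigr),
\]
with $\delta=\delta(\e)\in(0,1)$ fixed below. The map $Q_\e$ is linear, $Q_\e v$ is divergence-free, and---because $\beta_\delta\equiv1$ near $\p D$---the trace of $Q_\e v$ on $J\times\p D$ equals $\nabla^\bot\psi=b|_{\p D}=v$. As $\beta_\delta$ is a fixed smooth function, $Q_\e v\in L^2(J,H_\sigma^{s+1})$ and $\p_t(Q_\e v)=\nabla^\bot(\beta_\delta\,\p_t\psi(\cdot))\in L^2(J,H_\sigma^{s-1})$ with $\|Q_\e v\|_{\XX_s}\le C_s(\e)\|v\|_{G_s}$, which is~\eqref{4.36}; in particular $Q_\e v\in\XX_2$.

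The heart of the matter is~\eqref{4.62}, and the point is to shift all derivatives off $Q_\e v$. For $u\in V$, an integration by parts using $\diver u=0$ and $u|_{\p D}=0$ gives $\bigl(\langle u,\nabla\rangle(Q_\e v)(t),u\bigr)_{L^2}=-\bigl(\langle u,\nabla\rangle u,(Q_\e v)(t)\bigr)_{L^2}$, so that by the Cauchy--Schwarz inequality applied to the three factors $|\nabla u|$, $|u|/d$, $d\,|(Q_\e v)(t)|$ and then Hardy's inequality,
\[
\bigl|\bigl(\langle u,\nabla\rangle(Q_\e v)(t),u\bigr)_{L^2}\bigr|
\le \bigl\|d\,(Q_\e v)(t)\bigr\|_{L^\infty}\,\|\nabla u\|_{L^2}\,\Bigl\|\tfrac{u}{d}\Bigr\|_{L^2}
\le C_H\,\bigl\|d\,(Q_\e v)(t)\bigr\|_{L^\infty}\,\|u\|_1^2 .
\]
Writing $(Q_\e v)(t)=\beta_\delta\,b(t)+\psi(t)\,\nabla^\bot\beta_\delta$ and using $d\,\beta_\delta\le\delta$ on $\supp\beta_\delta$, $d\,|\nabla\beta_\delta|\le C_1\delta$, and $\|b(t)\|_{L^\infty}+\|\psi(t)\|_{L^\infty}\le C\|v(t)\|_{3/2}$ (Lemma~\ref{l4.8} with exponent $2$ and $H^2(D)\hookrightarrow C^0(\overline D)$), one gets $\|d\,(Q_\e v)(t)\|_{L^\infty}\le C'\delta\,\|v(t)\|_{3/2}$. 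Choosing $\delta=\delta(\e)$ so small that $C_HC'\delta\le\e$ yields~\eqref{4.62}.

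The only genuinely non-routine point---and hence the hard part---is the existence of a \emph{single-valued} stream function for $b(t)$ on the multiply connected domain $D$: this follows from the zero-flux condition~\eqref{4.061} on each interior boundary component (via Stokes' theorem applied to a cycle encircling each hole), and it is precisely why the proposition is stated for $G_2^0$ rather than $G_2$. All the remaining elements---Hardy's inequality, the logarithmic cut-off, the elliptic and Sobolev bounds for $\SE$ and for the stream function, and the transfer of the time regularity (immediate, since $\SE$ and $\nabla^\bot(\beta_\delta\,\cdot)$ act in the $x$-variable and commute with $\p_t$)---are standard, and are the content of the argument in~\cite{temam1979}.
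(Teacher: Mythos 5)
Your proof is correct and follows essentially the same route as the paper: both reduce to a spatial extension, observe that the zero-flux condition on each interior boundary component makes the Hopf extension a pure $\nabla^\bot$ of a stream function, apply the logarithmic cut-off $\theta_\delta$ to that stream function, and deduce~\eqref{4.62} from Hardy's inequality with $\delta=\delta(\e)$ small. The paper only sketches this (deferring to Temam), whereas you have written out the integration by parts and the three-factor Cauchy--Schwarz estimate explicitly; the content is the same.
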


\begin{proof}[Sketch of the proof]
As in the case of Proposition~\ref{p4.7}, it suffices to construct an extension operator in~$x$; see Lemma~\ref{l4.8}. Namely, let $H^{\frac12,0}(\p D,\R^2)$ be the subspace of those functions $v\in \dot H^{1/2}(\p D,\R^2)$ that satisfy the relations
$$
\int_{\p D_i}\langle v,\nnn_x\rangle\dd\sigma=0\quad
\mbox{for $1\le i\le m$}.
$$
We claim that, for any $\e>0$, there is a continuous linear extension operator 
$$
\SE _\e:H^{\frac12,0}(\p D,\R^2)\to H_\sigma^1(D,\R^2)
$$
that satisfies~\eqref{4.37} with $\SE =\SE _\e$ and any integer $s\ge1$, as well as the inequality
\begin{equation} \label{4.63}
\bigl|\bigl(\langle u,\nabla\rangle \SE _\e v,u\bigr)_{L^2}\bigr|
\le \e\,\|v\|_{3/2}\|u\|_1^2 
\end{equation}
for any $u\in V$ and $v\in (H^{\frac12,0}\cap H^{3/2})(\p D,\R^2)$. Once this is proved, one can conclude using the same argument as in the  proof of Proposition~\ref{p4.7}. 

Analysing the proof of Lemma~\ref{l4.8}, we see that the extension operator~$\SE $ constructed there possesses the following property: there is a continuous operator $\SE' :H^{\frac12,0}(\p D,\R^2)\to H^2(D)$ such that, for $v\in H^{\frac12,0}(\p D,\R^2)$, we have 
\begin{equation} \label{4.64}
\SE v=\nabla^\bot(\SE' v), \quad 
\|\SE' v\|_{s+1}\le C_s\|v\|_{s-1/2}
\quad\mbox{for any integer $s\ge1$}. 
\end{equation}
We now choose a function $\theta_\delta\in C^\infty(\overline{D})$ such that
\begin{align*}
\theta_\delta(x)&=1& &\mbox{for $d(x,\p D)\le\tfrac12 e^{-2/\delta}$}, \\
\theta_\delta(x)&=0&&\mbox{for $d(x,\p D)\ge 2e^{-1/\delta}$},\\
\bigl|\nabla\theta_\delta(x)\bigr|
&\le\frac{\delta}{d(x,\p D)}&
&\mbox{for $d(x,\p D)\le 2e^{-1/\delta}$},
\end{align*}
where $d(x,\p D)$ denotes the distance from~$x$ to~$\p D$; see Lemma~1.9 in~\cite[Section~II.1]{temam1979}. The operator~$\SE _\e$ is sought in the form
$$
\bigl(\SE _\e v\bigr)(x)=\nabla^\bot\bigl(\theta_\delta(x)(\SE' v)(x)\bigr),
$$
where $\delta=\delta(\e)>0$ is a number. Inequality~\eqref{4.37} follows  from~\eqref{4.64}, and a simple calculation based on Hardy's inequality shows that~\eqref{4.63} is true for sufficiently small~$\delta$; see the proof of Lemma~1.8 in~\cite[Section~II.1]{temam1979}.  
\end{proof}

\begin{wrapfigure}{r}{0.39\textwidth}
\vspace{-20pt}
\begin{center}
\begin{tikzpicture}
\draw[domain=0:360,smooth,samples=100] plot (\x:{2+sin(3*\x)/4+cos(2*\x)/6+cos(3*\x+25)/7});
\draw[domain=0:360,smooth,samples=100] plot (\x:{2+sin(3*\x)/4+cos(2*\x)/6+cos(3*\x+25)/7+1/3});
\draw[fill=gray!30!white] (0,1) circle(0.4) node {$D_1$}; 
\draw[fill=gray!30!white] (1,-0.1) circle(0.35) node {$D_2$}; 
\draw[fill=gray!30!white] (-0.5,-0.3) circle(0.45) node {$D_3$}; 
\draw (0,-1.3) node {$D$}; 
\draw (2,-1.8) node {$\widetilde\DD$}; 
\end{tikzpicture}
\end{center}
\captionof{figure}{The domain~$\DD$\label{pic3}}
\end{wrapfigure}

\subsection{Extension to a larger domain}
\label{s4.5} 

As before, we denote by $D\subset\R^2$ a bounded domain satisfying the hypotheses of Section~\ref{s3.0}. In Section~\ref{s3.2}, we used the fact that the functions in~$\XX_s$ can be extended to a larger domain. Namely, let $\widetilde \DD\subset\R^2$ be a simply-connected domain containing the closure of~$\widetilde D$ and let 
\begin{equation} \label{4.52}
\DD=\widetilde \DD\setminus\biggl(\,\bigcup_{i=1}^m\overline{D}_i\biggr);
\end{equation}
cf.~\eqref{3.2}. The following proposition shows how to extend the divergence-free vector fields from~$D$ to~$\DD$. 

\begin{proposition} \label{p4.9}
There is a continuous linear operator
$$
\LL:L_\sigma^2(D,\R^2)\to L_\sigma^2(\DD,\R^2)
$$
possessing the following properties.
\begin{itemize}
\item[\bf(a)] 
For any $v\in L_\sigma^2(D,\R^2)$, the restriction of~$\LL v$ to~$D$ coincides with~$v$. 
\item[\bf(b)] 
If, in addition, $v\in W^{r,q}(D,\R^2)$ for some numbers $r\ge0$ and $q\in[2,\infty)$, then $\LL v$ belongs to~$W^{r,q}(\DD,\R^2)$ and satisfies the inequality
\begin{equation} \label{4.53}
\|\LL v\|_{W^{r,q}(\DD)}\le C_{r,q}\|v\|_{W^{r,q}(D)},
\end{equation}
where $C_{r,q}>0$ does not depend on~$v$.
\end{itemize}
\end{proposition}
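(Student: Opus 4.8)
The plan is to reduce the construction to extending a scalar stream function, using that in dimension $2$ a divergence-free field is a rotated gradient, $\nabla^\bot\psi=(-\p_2\psi,\p_1\psi)$. The key structural observation is that $\DD$ has the same number of ``holes'' as $D$: only the outer component of the boundary has been enlarged, so the topological obstruction to the existence of a single-valued stream function is the same on $D$ and on $\DD$, and the finitely many circulation-carrying fields needed on $D$ can be prescribed directly on $\DD$. Concretely, mimicking the construction in the proof of Lemma~\ref{l4.8} (see~\eqref{4.44}), I would fix once and for all harmonic functions $\hat z_i\in C^\infty(\overline\DD)$, $i=1,\dots,m$, solving $\Delta\hat z_i=0$ in $\DD$ with $\p_\nnn\hat z_i=\delta_{ij}|\Gamma_i|^{-1}$ on $\Gamma_j=\p D_j$ and $\p_\nnn\hat z_i=-|\p\widetilde\DD|^{-1}$ on $\p\widetilde\DD$ (the Neumann compatibility condition holds by construction), and set $\hat\xi_i=\nabla\hat z_i$; these are fixed, smooth, divergence-free fields on $\overline\DD$ with $\int_{\Gamma_j}\langle\hat\xi_i,\nnn\rangle\dd\sigma=\delta_{ij}$.

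Given $v\in L_\sigma^2(D,\R^2)$, put $c_i(v)=\int_{\Gamma_i}\langle v,\nnn\rangle\dd\sigma$; this is a bounded linear functional on $L_\sigma^2(D,\R^2)$, since the normal trace of a divergence-free $L^2$ field is controlled in $H^{-1/2}(\p D)$ by $\|v\|_{L^2}$, and $1_{\Gamma_i}\in H^{1/2}(\p D)$. Then $v_0:=v-\sum_{i=1}^m c_i(v)\,\hat\xi_i|_D$ is divergence-free on $D$ with zero flux through every boundary component; hence, by the classical stream-function representation of divergence-free fields on a planar multiply connected domain, the closed $L^2$ one-form $v_{0,2}\,\dd x_1-v_{0,1}\,\dd x_2$ has vanishing periods over the generators of $H_1(D)$ and admits a single-valued primitive: there is $\psi\in H^1(D)$, unique up to an additive constant, with $\nabla^\bot\psi=v_0$. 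Normalising by $\int_D\psi\,\dd x=0$ makes $v\mapsto\psi$ linear, and the Poincar\'e--Wirtinger inequality applied to $\psi$ (whose gradient is $(v_{0,2},-v_{0,1})$) gives $\|\psi\|_{W^{s+1,q}(D)}\le C_{s,q}\|v_0\|_{W^{s,q}(D)}$ for all $s\ge0$ and all $q\in[2,\infty)$; in particular $\|\psi\|_{H^1(D)}\le C\|v\|_{L^2(D)}$. Fixing a bounded linear extension operator $\widetilde E:W^{s,q}(D)\to W^{s,q}(\DD)$ valid for all these $s$ and $q$ simultaneously (available since $\p D$ is smooth --- e.g.\ Stein's operator followed by restriction to $\DD$), I would then set
$$
\LL v=\nabla^\bot(\widetilde E\psi)+\sum_{i=1}^m c_i(v)\,\hat\xi_i .
$$

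The three asserted properties are then checked directly. The map $\LL$ is linear, being a composition of linear maps; $\LL v$ is divergence-free on $\DD$ because $\diver\nabla^\bot(\cdot)=0$ in the distributional sense and $\diver\hat\xi_i=0$; and continuity $L_\sigma^2(D,\R^2)\to L_\sigma^2(\DD,\R^2)$ follows from $\|\nabla^\bot(\widetilde E\psi)\|_{L^2(\DD)}\le C\|\psi\|_{H^1(D)}\le C\|v\|_{L^2(D)}$ together with $|c_i(v)|\le C\|v\|_{L^2(D)}$. For property~(a): $\widetilde E\psi|_D=\psi$, so $\nabla^\bot(\widetilde E\psi)|_D=\nabla^\bot\psi=v_0$ and hence $\LL v|_D=v_0+\sum_i c_i(v)\hat\xi_i|_D=v$. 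For property~(b): if $v\in W^{r,q}(D,\R^2)$ then $v_0\in W^{r,q}$ with $\|v_0\|_{W^{r,q}}\le C\|v\|_{W^{r,q}}$ (the $\hat\xi_i$ are fixed and smooth and $|c_i(v)|\le C\|v\|_{L^2}\le C\|v\|_{W^{r,q}}$), so $\psi\in W^{r+1,q}(D)$, $\widetilde E\psi\in W^{r+1,q}(\DD)$, $\nabla^\bot(\widetilde E\psi)\in W^{r,q}(\DD)$ with the corresponding bound, and the finite sum $\sum_i c_i(v)\hat\xi_i$ lies in $C^\infty(\overline\DD)$ with norm $\le C\|v\|_{L^2}$; adding these yields~\eqref{4.53}.

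The only genuinely delicate point is the homological bookkeeping behind the stream-function representation --- namely that ``zero flux through every boundary component'' is exactly the condition making the associated $1$-form exact on the multiply connected domain $D$, with the primitive inheriting the full scale of $W^{s,q}$ estimates --- together with the observation that the same family $\{\hat\xi_i\}$ of circulation carriers is available on $\DD$ precisely because enlarging the outer boundary does not change the number of connected components of the boundary. Everything else is a routine chain of Sobolev estimates, close in spirit to the proof of Lemma~\ref{l4.8}.
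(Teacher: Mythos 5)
Your proof is correct, and it shares the skeleton of the paper's argument: strip off the flux obstruction through the interior boundary components, represent the remainder as $\nabla^\bot$ of a single-valued stream function (which exists precisely because those fluxes vanish), extend the scalar by a bounded extension operator, and add back a flux-carrying correction that is defined on all of~$\DD$ from the start. Where you differ is in the correction term, and your choice is somewhat more economical. The paper solves, for each~$v$, a mixed boundary value problem $\Delta z=0$ in~$\DD$ with Neumann data $\langle v,\nnn\rangle$ on the inner boundaries and $z=0$ on $\p\widetilde\DD$, and subtracts~$\nabla z$; the bound $\|z\|_{W^{r+1,q}}\le C\|v\|_{W^{r,q}}$ then rests on elliptic regularity with boundary data that are themselves only traces of $W^{r,q}$ fields. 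You instead subtract $\sum_i c_i(v)\,\hat\xi_i$, where the~$\hat\xi_i$ are $m$ fixed smooth circulation carriers and the coefficients are the scalar fluxes~$c_i(v)$; since this correction lives in a fixed finite-dimensional subspace of $C^\infty(\overline\DD)$, all of its Sobolev norms are controlled by $|c_i(v)|\le C\|v\|_{L^2}\le C\|v\|_{W^{r,q}}$, no elliptic estimate is needed, and the stream-function bound reduces to Poincar\'e--Wirtinger. The two classical facts you invoke without proof---continuity of the normal trace from $L_\sigma^2(D)$ to $H^{-1/2}(\p D)$, which makes each~$c_i$ a bounded functional, and the equivalence between zero flux through every boundary component and the existence of a single-valued primitive on the multiply connected domain~$D$---are exactly what the paper establishes via the Leray decomposition, so your level of detail matches the original.
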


Before proving this result, we state a straightforward corollary from it concerning the extension of functions belonging to~$\XX_s$. We denote by~$\XX_s(\DD)$ the space~$\XX_s$ constructed on the domain~$\DD$. 

\begin{corollary} \label{c4.10}
For any integer~$s\ge1$ and any $u\in\XX_s$, the function $\tilde u(t,x)$ defined by $\tilde u(t)=\LL u(t)$ belongs to~$\XX_s(\DD)$ and satisfies the inequality
\begin{equation} \label{4.54}
\|\tilde u\|_{\XX_s(\DD)}\le C_s \|u\|_{\XX_s}, 
\end{equation}
where $C_s>0$ does not depend on~$u$. If, in addition, $u\in C(J,W^{r,q}(D))$ for some $r\ge0$ and $q\in[2,\infty)$, then $\tilde u\in C(J,W^{r,q}(\DD))$, and we have 
\begin{equation} \label{4.054}
\|\tilde u\|_{C(J,W^{r,q}(\DD))}\le C_{r,q}' \|u\|_{C(J,W^{r,q}(D))}, 
\end{equation}
where $C_{r,q}'>0$ does not depend on~$u$. 
\end{corollary}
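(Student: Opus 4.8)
The plan is to obtain Corollary~\ref{c4.10} directly from Proposition~\ref{p4.9} by applying the extension operator~$\LL$ pointwise in time and exploiting its linearity and boundedness. First I would record that, taking $q=2$ in part~(b) of Proposition~\ref{p4.9}, the operator~$\LL$ restricts to a bounded linear map from $H_\sigma^r(D,\R^2)$ to $H_\sigma^r(\DD,\R^2)$ for every integer $r\ge0$: the field $\LL v$ is divergence-free because~$\LL$ maps into $L_\sigma^2(\DD,\R^2)$, and by~\eqref{4.53} it lies in $W^{r,2}(\DD,\R^2)=H^r(\DD,\R^2)$, hence in $H_\sigma^r(\DD,\R^2)$, with $\|\LL v\|_r\le C_{r,2}\|v\|_r$. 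Applying this with $r=s+1$ and with $r=s-1$ (recall that $\XX_s$ is defined via the spaces $H_\sigma^{s+1}$ and $H_\sigma^{s-1}$; see~\eqref{4.61}), we obtain for a.e.\ $t\in J$ that $\|\LL u(t)\|_{s+1}\le C_{s+1,2}\|u(t)\|_{s+1}$ and $\|\LL(\p_t u)(t)\|_{s-1}\le C_{s-1,2}\|(\p_t u)(t)\|_{s-1}$.

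Next I would verify that $\p_t\tilde u=\LL(\p_t u)$ in the sense of $H_\sigma^{s-1}(\DD)$-valued distributions on~$J$. This is the standard fact that a fixed bounded linear operator between Banach spaces commutes with the weak time derivative of a Bochner-integrable function: for any scalar $\varphi\in C_c^\infty((0,1))$ one has $\int_J \LL u(t)\,\varphi'(t)\,\dd t=\LL\bigl(\int_J u(t)\,\varphi'(t)\,\dd t\bigr)=-\LL\bigl(\int_J (\p_t u)(t)\,\varphi(t)\,\dd t\bigr)=-\int_J \LL(\p_t u)(t)\,\varphi(t)\,\dd t$, where we used that~$\LL$, being bounded, may be pulled through the Bochner integral. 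Together with the pointwise bounds above, this shows that $\tilde u\in L^2(J,H_\sigma^{s+1}(\DD))$ and $\p_t\tilde u\in L^2(J,H_\sigma^{s-1}(\DD))$, that is, $\tilde u\in\XX_s(\DD)$, and that $\|\tilde u\|_{\XX_s(\DD)}\le C_s\|u\|_{\XX_s}$ with $C_s=\max\{C_{s+1,2},C_{s-1,2}\}$; this is~\eqref{4.54}.

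For the last assertion I would use that, by part~(b) of Proposition~\ref{p4.9}, $\LL$ is a bounded, hence continuous, linear operator from $W^{r,q}(D,\R^2)$ to $W^{r,q}(\DD,\R^2)$. Therefore $t\mapsto\LL u(t)$ is the composition of the continuous map $t\mapsto u(t)$ from~$J$ to $W^{r,q}(D)$ with the continuous operator~$\LL$, and hence is continuous from~$J$ to $W^{r,q}(\DD)$; taking the supremum over $t\in J$ in~\eqref{4.53} gives~\eqref{4.054}.

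I do not expect a genuine obstacle here. The only mildly delicate point is the interchange of~$\LL$ with the Bochner integral and with the distributional time derivative in the second paragraph, but this is routine once one recalls that~$\LL$, being bounded and linear, is continuous and commutes with both operations; everything else is a direct transcription of the bounds of Proposition~\ref{p4.9} from a fixed time slice to the Bochner spaces $L^2(J,\cdot)$ and $C(J,\cdot)$.
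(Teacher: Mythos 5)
Your argument is correct and is precisely the ``straightforward'' deduction the paper has in mind: the paper states Corollary~\ref{c4.10} without proof as an immediate consequence of Proposition~\ref{p4.9}, and your pointwise-in-time application of~$\LL$, together with the routine commutation of a bounded linear operator with the Bochner integral and the weak time derivative, is exactly the intended route.
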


\begin{proof}[Proof of Proposition~\ref{p4.9}]
We first derive a necessary and sufficient condition for a function $w\in L_\sigma^2(D,\R^2)$ to be representable in the form 
\begin{equation} \label{4.55}
w=\nabla^\bot p=(-\p_2 p,\p_1 p),
\end{equation}
where $p\in \dot H^1(D)$, and $\dot H^k=\dot H^k(D)$ stands for the space of functions in~$H^k(D)$ with zero mean value. Namely, we claim that~\eqref{4.55} holds if and only if 
\begin{equation} \label{4.56}
\int_{\Gamma_i}\langle w,\nnn\rangle\dd\sigma=0\quad\mbox{for $1\le i\le m$},
\end{equation}
and in this case $p$ linearly depends on~$w$ and satisfies the inequality
\begin{equation} \label{4.056}
\|p\|_{W^{k+1,q}}\le C_1\|w\|_{W^{k,q}},
\end{equation}
where we denote by $C_i>0$ some constants depending only on~$k$ and~$q$. Indeed, suppose that~\eqref{4.55} holds and  denote by~$\chi_i\in C^\infty(\widetilde D)$ a function equal to~$1$ in a small neighbourhood of~$\overline D_i$ and to~$0$ outside a larger neighbourhood having no intersection with~$\p D\setminus \p D_i$. Then $\diver(\nabla^\bot(\chi_ip))=0$ in~$D$. Taking the integral of this relation over~$D$ and integrating by parts, we arrive at~\eqref{4.56}. Conversely, suppose that~\eqref{4.56} is fulfilled. It follows from the Leray decomposition (see Theorem~1.5 in~\cite[Chapter~I]{temam1979}) that the function $(w_2,-w_1)\in L^2(D,\R^2)$ can be written as~$\nabla p$ for some $p\in \dot H^1(D)$ if and only if 
\begin{equation} \label{4.57}
\int_D(w_2\varphi_1-w_1\varphi_2)\dd x=0\quad
\mbox{for any $\varphi=(\varphi_1,\varphi_2)\in \VV$},
\end{equation}
where $\VV$ stands for the space of infinitely smooth divergence-free vector fields on~$\R^2$ with compact support in~$D$. Thus, we need to establish~\eqref{4.57}. To this end, we define $\psi\in C^\infty(\R^2)$ by the relation
$$
\psi(x)=\int_{\gamma(a,x)}(\varphi_2\dd x_1-\varphi_1\dd x_2),
$$
where $a\in\R^2\setminus\widetilde D$ is a fixed point, and $\gamma(a,x)$ is an arbitrary smooth curve without self-intersection  going from~$a$ to~$x$. The Stokes theorem implies that~$\psi$ is a well-defined, infinitely smooth function with compact support in~$\widetilde D$ such that $\nabla^\bot\psi=\varphi$. It follows that 
\begin{align*}
\int_D(w_2\varphi_1-w_1\varphi_2)\dd x
&=- \int_D\langle w,\nabla\rangle\psi\,\dd x
=\sum_{i=1}^m\int_{\Gamma_i}\psi\langle w,\nnn\rangle\dd\sigma=0,
\end{align*}
where the last equality follows from~\eqref{4.56} and the fact that~$\psi$ is constant on each of the curves~$\Gamma_i$. We have thus shown that~$w$ can be written in the form~\eqref{4.55}. The proof of the Leray decomposition given in~\cite[Chapter~I]{temam1979}, together with the regularity theory for the boundary value problems for the Laplace operator, imply  that~$p$ is a linear function of~$w$ that satisfies~\eqref{4.056}.  

\smallskip
We now construct the operator~$\LL$. Let $z\in H^1(\DD)$ be the unique solution of the problem
$$
\Delta z=0\quad\mbox{in $\DD$}, \qquad 
\frac{\p z}{\p\nnn}\biggr|_{\p D_i}=\langle v,\nnn\rangle\bigr|_{\p D_i}
\quad\mbox{for $1\le i\le m$}, 
\qquad z\bigr|_{\p\widetilde \DD}=0.
$$
It is straightforward to check that~$z$ is a linear function of~$v$, and standard estimates for solutions of elliptic boundary value problems imply that 
\begin{equation} \label{4.58}
\|z\|_{W^{r+1,q}}\le C_2\|v\|_{W^{r,q}}\quad\mbox{for any $r\ge0$, $q\ge2$}.
\end{equation}
Let us consider the function $w=v-\nabla z$ defined in~$D$. It is not difficult to see that $w\in L_\sigma^2(D,\R^2)\cap W^{r,q}(D,\R^2)$ as soon as $v\in  W^{r,q}(D,\R^2)$ and that~$w$ satisfies~\eqref{4.56}. Thus, we can represent~$w$ in the form~\eqref{4.55}, where~$p\in \dot H^1(D)$ is a linear function of~$w$ satisfying~\eqref{4.056}. Recalling that~$z$ is also a linear function of~$v$, we see that~$p$ linearly depends on~$v$. Furthermore, it follows from~\eqref{4.056} and~\eqref{4.58} that~$p$ satisfies the inequalities
\begin{equation} \label{4.59}
\|p\|_{W^{r+1,q}}\le C_3\|v\|_{W^{r,q}}\quad\mbox{for $r\ge0$}. 
\end{equation}
Let $\LL_0:H^1(D)\to H^1(\DD)$ be an extension operator such that, for any $r\ge0$ and $q\ge2$, we have 
\begin{equation} \label{4.60}
\|\LL_0h\|_{W^{1+r,q}}\le C_4\|h\|_{W^{1+r,q}}\quad
\mbox{for $h\in W^{1+r,q}(D)$}; 
\end{equation}
see Theorem~5.22 and Remark~5.23 in~\cite{adams1975}. We now set
$$
\LL v=\nabla^\bot(\LL_0p)+\nabla z. 
$$
Then $\LL v\in L_\sigma^2(\DD,\R^2)$ and $\LL v|_D=v$. Moreover, it follows from~\eqref{4.58}--\eqref{4.60} that~\eqref{4.53} is valid. This completes the proof of the proposition.
\end{proof}

\subsection{Approximation by regular functions}
\label{s3.6}
Recall that, given a domain $D\subset\R^2$, we denote by~$V$ the space of divergence-free vector functions $v\in H^1(D,\R^2)$ vanishing on the boundary~$\p D$. We shall sometimes write $V(D)$ to indicate the domain on which the space~$V$ is considered. 

\begin{proposition} \label{p3.12}
	Let $D\subset D'\subset \R^2$ be some domains satisfying the hypotheses of Section~\ref{s3.0} (see Figure~\ref{pic2}) and let $V_D(D')$ be the subspace in~$V(D')$ that consists of the functions vanishing on $D'\setminus D$. Then there is a family of bounded linear operators $\{\Omega_\gamma:V_D(D')\to (H^2\cap V)(D')\}_{\gamma\in(0,1)}$  such that the image of~$\Omega_\gamma$ is contained in~$V_D(D')$ and 
	\begin{equation} \label{omegav}
		\|v-\Omega_\gamma v\|\le \gamma \|v\|_1
		\quad\mbox{for any $v\in V_D(D')$, $\gamma\in(0,1)$}.
	\end{equation}
\end{proposition}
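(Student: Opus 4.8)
The plan is to reduce the statement to a scalar regularisation problem by passing to the stream function, and then to regularise the resulting scalar by an inward dilation followed by a mollification. The genuine difficulty will be that $\Omega_\gamma v$ has to be divergence-free, of class $H^2$, \emph{and} supported in $\overline D$ all at once.

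First I would pass to the stream function. Given $v\in V_D(D')$, viewed as a divergence-free field on $D'$ vanishing on the collar $D'\setminus D$ and on $\p D'$, let $\omega=\p_1v_2-\p_2v_1\in L^2(D')$ be its curl and let $\psi\in H^2(D')$ solve the Neumann problem $\Delta\psi=\omega$ in $D'$, $\p_n\psi=0$ on $\p D'$, $\int_{D'}\psi=0$ (the compatibility $\int_{D'}\omega=\oint_{\p D'}\langle v,\ttau\rangle\,\dd\sigma=0$ holds since $v=0$ on $\p D'$). Then $v\mapsto\psi$ is linear and bounded from $V_D(D')$ into $H^2(D')$, with $\|\psi\|_{H^2(D')}\le C\|v\|_1$, and $v-\nabla^\bot\psi$ is divergence- and curl-free (hence has harmonic components) and vanishes on the open collar, so it vanishes identically by unique continuation; thus $v=\nabla^\bot\psi$. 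Since $v=0$ on the connected set $D'\setminus D$, the function $\psi$ is constant there; subtracting that constant (a bounded linear functional of $v$) I may assume $\psi\equiv0$ on $D'\setminus D$. A short inspection of $v=\nabla^\bot\psi$, using $\psi=0$ on the collar and $v=0$ on $\p D'$, then shows that $\nabla\psi=0$ on $\p D$, that $\psi=0$ on $\p\widetilde D$, and that $\psi$ equals a constant $c_i=c_i(v)$ on each $\p D_i$, with $v\mapsto c_i$ again bounded linear.

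Next I would peel off these boundary constants. Fix cutoffs $\theta_i\in C^\infty(\overline{D'})$ with $\theta_i\equiv1$ near $\overline D_i$ and support in a small neighbourhood of $\overline D_i$ disjoint from the other holes and from $\p\widetilde D$ (so each $\theta_i$ is constant near every component of $\p D$ and vanishes on the collar), and set $\bar\psi=\sum_{i=1}^m c_i\theta_i$, $\psi_0=\psi-\bar\psi$. The boundary behaviour found above gives $\psi_0=\p_n\psi_0=0$ on $\p D$, i.e.\ $\psi_0\in H^2_0(D)$ with $\|\psi_0\|_{H^2(D)}\le C\|v\|_1$ (here $\psi_0$ is extended by zero to $D'$), while $\nabla^\bot\bar\psi$ is smooth, divergence-free, vanishes near $\p D'$ and on the collar, hence lies in $V_D(D')$. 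It then remains to build, for each $\gamma\in(0,1)$, a bounded linear $R_\gamma:H^2_0(D)\to C^\infty_c(D)$ with $\|w-R_\gamma w\|_{H^1(D)}\le\gamma\|w\|_{H^2(D)}$. For that I would take a smooth vector field $N$ on $\R^2$ that is inward-pointing on $\p D$ and its time-$s$ flow $\Phi_s$, so that $\Phi_s\to\mathrm{id}$ in $C^k$ for every $k$ and $\Phi_s(\overline D)$ is a compact subset of $D$ for small $s>0$; extending $w$ by zero to $\R^2$ and setting $R_\gamma w=\bigl((w\circ\Phi_{s(\gamma)}^{-1})*\rho_{\e(\gamma)}\bigr)\big|_D$ with a standard mollifier $\rho_\e$, a routine change-of-variables bound $\|w-w\circ\Phi_s^{-1}\|_{H^1(D)}\le C\|\Phi_s-\mathrm{id}\|_{C^1}\|w\|_{H^2(D)}$ together with the elementary estimate $\|f-f*\rho_\e\|_{H^1}\le C\e\|f\|_{H^2}$ shows that choosing first $s(\gamma)$ small and then $\e(\gamma)$ small (below $\dist(\Phi_{s(\gamma)}(\overline D),\p D)$) gives both the $H^1$-estimate and $R_\gamma w\in C^\infty_c(D)$, with boundedness of $R_\gamma$ into $H^3(D)$ immediate from Young's inequality.

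Finally I would set $\Omega_\gamma v=\nabla^\bot\bar\psi+\nabla^\bot(R_\gamma\psi_0)$, extended by zero to $D'$. Both terms are divergence-free and supported in $\overline D$, the first is smooth and the second lies in $C^\infty_c(D)$, so $\Omega_\gamma v\in(H^2\cap V)(D')$ with image in $V_D(D')$, and $v\mapsto\Omega_\gamma v$ is linear and bounded with a $\gamma$-dependent norm; using $\psi=\bar\psi+\psi_0$ on $D$ and $\psi=\bar\psi=\psi_0=0$ on the collar, one gets $\|v-\Omega_\gamma v\|=\|\nabla(\psi_0-R_\gamma\psi_0)\|_{L^2(D)}\le\gamma\|\psi_0\|_{H^2(D)}\le C\gamma\|v\|_1$, and replacing $\gamma$ by $\gamma/C$ in the construction finishes the argument. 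The step I expect to be the real obstacle is precisely this last compatibility: an element of $(H^2\cap V)(D')$ supported in $\overline D$ must vanish to first order across the interface $\p\widetilde D\cap D'$, which a naive mollification of $v$ destroys, while a cutoff near that interface would destroy the $L^2$-approximation because $v$ may concentrate there; the stream-function detour converts the divergence-free and support requirements into ordinary Dirichlet conditions on a scalar, and the inward dilation makes the regularised scalar compactly supported in $D$ so that its zero-extension is automatically $H^2$ on $D'$. (Alternatively, $R_\gamma$ could be taken to be the resolvent $(I+\gamma^4\mathcal A^{1/2})^{-1}$ of $\mathcal A=(-\Delta)^3$ with clamped boundary conditions, invoking $D(\mathcal A^{1/6})=H^1_0$, $D(\mathcal A^{1/3})=H^2_0$, $D(\mathcal A^{1/2})=H^3_0(D)$, in which case $R_\gamma\psi_0\in H^3_0(D)$ already suffices for the zero-extension of $\nabla^\bot(R_\gamma\psi_0)$ to be $H^2$ on $D'$.)
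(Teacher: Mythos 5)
Your construction follows essentially the same route as the paper's proof: represent $v$ as $\nabla^\bot$ of a stream function controlled in $H^2$ by $\|v\|_1$, regularise that scalar by a support-preserving smoothing (you use an inward flow followed by mollification where the paper invokes a partition of unity and mollification), and apply $\nabla^\bot$ again. One step, however, is not justified as written: the identity $v=\nabla^\bot\psi$. You argue that $v-\nabla^\bot\psi$ is a harmonic field vanishing on the collar $D'\setminus D$, but only $v$ is known to vanish there, not $\nabla^\bot\psi$, so unique continuation cannot be invoked --- the argument is circular. The claim is nevertheless true and easy to repair: since $v$ vanishes on $\p D'$, its flux through every component of $\p D'$ is zero, so $v=\nabla^\bot p$ for a single-valued $p\in H^2(D')$ (this is precisely what the proof of Proposition~\ref{p4.9} establishes); then $\psi-p$ is harmonic in the connected domain $D'$ with vanishing Neumann data, hence constant, and $\nabla^\bot\psi=v$.

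On the other hand, your explicit treatment of the constants $c_i$ on the interior boundaries $\p D_i$ via the cutoffs $\theta_i$ is a point where you are more careful than the paper's one-line assertion that the stream function ``vanishes outside~$D$'': in general $p$ is only locally constant outside~$D$, and $c_i$ equals (up to sign) the flux of~$v$ across a cut joining $\p\widetilde D$ to $\p D_i$, which need not vanish, so a correction of the kind you perform (or an extension by locally constant values handled by the mollification) is genuinely required. With the $v=\nabla^\bot\psi$ step repaired, the remaining ingredients --- the Neumann solvability and $H^2$ bound, the characterisation of $H_0^2(D)$ by vanishing trace and normal trace, the inward-flow-plus-mollification estimate, and the final $\nabla^\bot$ bookkeeping --- are standard and correct, so the proof goes through.
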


\begin{proof}
As was established in the proof of Proposition~\ref{p4.9}, for any $v\in V(D')$ there is $p\in H^2(D')$ such that $v=\nabla^\bot p$ and $\|p\|_2\le C\|v\|_1$. Moreover, the construction of~$p$ given there implies that if $v\in V_D(D')$, then~$p$ can be extended to a function in~$H^2(\R^2)$ that vanishes outside~$D$. Using a partition of unity and convolution with an approximation of identity, one can construct a family of regularising operators $\omega_\gamma:H^2(\R^2)\to H^3(\R^2)$ such that $\omega_\gamma p$ vanishes on~$D^c$ if so does~$p$, and $\|\omega_\gamma p-p\|_1\le C^{-1}\gamma\,\|p\|_2$. The required family of operators can be defined by $\Omega_\gamma=\nabla ^\bot(\omega_\gamma p)$. The details of the procedure described above are very standard (e.g., see Chapter~5 in~\cite{adams1975}) and are omitted. 
\end{proof}

\addcontentsline{toc}{section}{References}
\def\cprime{$'$} \def\cprime{$'$}
  \def\polhk#1{\setbox0=\hbox{#1}{\ooalign{\hidewidth
  \lower1.5ex\hbox{`}\hidewidth\crcr\unhbox0}}}
  \def\polhk#1{\setbox0=\hbox{#1}{\ooalign{\hidewidth
  \lower1.5ex\hbox{`}\hidewidth\crcr\unhbox0}}}
  \def\polhk#1{\setbox0=\hbox{#1}{\ooalign{\hidewidth
  \lower1.5ex\hbox{`}\hidewidth\crcr\unhbox0}}} \def\cprime{$'$}
  \def\polhk#1{\setbox0=\hbox{#1}{\ooalign{\hidewidth
  \lower1.5ex\hbox{`}\hidewidth\crcr\unhbox0}}} \def\cprime{$'$}
  \def\cprime{$'$} \def\cprime{$'$} \def\cprime{$'$}
\providecommand{\bysame}{\leavevmode\hbox to3em{\hrulefill}\thinspace}
\providecommand{\MR}{\relax\ifhmode\unskip\space\fi MR }
\providecommand{\MRhref}[2]{%
  \href{http://www.ams.org/mathscinet-getitem?mr=#1}{#2}
}
\providecommand{\href}[2]{#2}

\end{document}